\theoremstyle{plain} 
\newtheorem{thm}{Theorem}[section] 
\newtheorem{thmy}{Theorem}
\newenvironment{thmx}{\stepcounter{thm}\begin{thmy}}{\end{thmy}}
\newtheorem*{thm*}{Theorem}
\newtheorem{prop}[thm]{Proposition}
\newtheorem{cor}[thm]{Corollary}
\newtheorem{lem}[thm]{Lemma}
\newtheorem*{prop*}{Proposition} 
\newtheorem*{conj*}{Conjecture}
\newtheorem{meta-conj}[thm]{Meta-Conjecture}
\newtheorem*{meta-conj*}{Meta-Conjecture}
\theoremstyle{definition}
\newtheorem{defin}[thm]{Definition}
\newtheorem{conv}[thm]{Convention}
\newtheorem{notation}[thm]{Notation}
\newtheorem{rem}[thm]{Remark} 
\newtheorem{exm}[thm]{Example}
\newtheorem{const}[thm]{Construction}
\newcommand{\bxi}{\bar{\xi}}
\newcommand{\f}{\mathbb{F}}
\newcommand{\z}{\mathbb{Z}}
\newcommand{\ca}{\mathcal{A}}
\newcommand{\ce}{\mathcal{E}}
\def\co{\colon\thinspace}
\DeclareMathOperator{\bT}{\mathbb{T}}
\DeclareMathOperator{\id}{\rm id}
\DeclareMathOperator{\bfilt}{\rm bfilt}
\DeclareMathOperator{\can}{\rm can}
\DeclareMathOperator{\im}{\rm im}
\DeclareMathOperator{\Ext}{\rm Ext}
\DeclareMathOperator{\Cotor}{\rm Cotor}
\DeclareMathOperator{\THH}{THH}
\DeclareMathOperator{\HH}{HH}
\DeclareMathOperator{\TC}{TC}
\DeclareMathOperator{\TP}{TP}
\DeclareMathOperator{\K}{K}
\DeclareMathOperator{\sk}{sk}
\DeclareMathOperator{\Gr}{Gr}
\DeclareMathOperator{\Th}{Th}
\DeclareMathOperator{\GL}{GL}
\DeclareMathOperator{\colim}{colim}
\DeclareMathOperator{\holim}{holim}
\DeclareMathOperator{\hocolim}{hocolim}
\DeclareMathOperator{\Hom}{Hom}
\title{A homological approach to chromatic complexity of algebraic K-theory}  
\author{Gabriel Angelini-Knoll}
\address{Universit{\'e} Paris 13, LAGA, CNRS, UMR 7539, F-93430, Villetaneuse, France}
\email{angelini-knoll@math.univ-paris13.fr}
\author{J.D. Quigley}
\address{Department of Mathematics, University of Virginia, Charlottesville, Virginia, U.S.A.}
\email{mbp6pj@virginia.edu}
\subjclass[2010]{19D55, 55P42, 55T25}
\begin{document}

\begin{abstract}
The family of Thom spectra~$y(n)$ interpolates between the sphere spectrum and the mod two Eilenberg--MacLane spectrum. Computations of Mahowald, Ravenel, Shick, and the authors show that the associative ring spectrum~$y(n)$ has type~$n$. Using trace methods, we give evidence that algebraic K-theory preserves this chromatic complexity. Our approach sheds light on the chromatic complexity of topological negative cyclic homology and topological periodic cyclic homology, which approximate algebraic K-theory and are of independent interest. Our main contribution is a homological approach that can be applied in great generality, such as to associative ring spectra~$R$ without additional structure whose coefficient rings are not completely understood. 
\end{abstract}

\maketitle

\tableofcontents

\section{Introduction}
In~\cite{AR08}, Ausoni--Rognes laid out an ambitious collection of conjectures about how the arithmetic of associative ring spectra can be understood using telescopically localized algebraic K-theory.\footnote{In this introduction and the abstract, we refer to $\mathbb{E}_{1}$~ring spectra as associative ring spectra and $\mathbb{E}_{\infty}$~ring spectra as commutative ring spectra.}~One of the essential features of these conjectures is that algebraic K-theory should increase chromatic complexity by one. For example, we say a spectrum~$X$ has height~$n$ if $K(n)_{*}X\ne 0$ and~$K(n+k)_{*}X=0$ for $k>0$, where $K(i)$ is the~$i$-th Morava K-theory at a fixed prime~$p$. It is now known by celebrated work of Burklund--Schlank--Yuan~\cite{BSY22} that for commutative ring spectra algebraic K-theory increases height by exactly one.
 
In the case of more general associative ring spectra, additional subtleties arise. For example, if $R$ is an associative ring spectrum, but not a more structured ring spectrum, then the algebraic K-theory of $R$ is not a ring spectrum. More generally, all of the invariants that are commonly used to compute algebraic K-theory through trace methods, such as topological Hochschild homology ($\THH$), topological negative cyclic homology ($\TC^{-}$), topological periodic cyclic homology ($\TP$), and topological cyclic homology ($\TC$), are not rings when applied to associative ring spectra without additional structure. Additionally, when the homotopy ring~$\pi_{*}R$ of an associative ring spectrum is not completely understood, this further complicates the study of its algebraic K-theory. In this paper, we demonstrate a homological approach to understanding the chromatic complexity of algebraic K-theory, building on ideas of Bruner--Rognes~\cite{BR05} and Lun{\o}e-Nielsen--Rognes~\cite{LNR11}, which can overcome these hurdles. 

We consider a family of $2$-primary associative ring spectra 
\[ S=y(0)\to y(1)\to \dots \to y(\infty)=H\f_2 \]
originally defined by Mahowald~\cite{Mah79}. Proposition \ref{cczn} (cf.~\cite{MRS01} for odd primes) implies that 
\[ n = \operatorname{min}\{ m : K(m)_*(y(n))\ne 0\},\]
so $y(n)$ may be considered type~$n$ (even though it is not a finite spectrum). 

We are interested in understanding $K(m)_*\K(y(n))$, but our main theorem is about an approximation to it. The Dundas--Goodwillie--McCarthy theorem~\cite{DGM12} and a result of Nikolaus--Scholze~\cite{NS18} reduce us to studying topological periodic and negative cyclic homology, $\TP(y(n))$ and~$\TC^-(y(n))$, instead of algebraic K-theory~$\K(y(n))$. Specifically, by~\cite[Theorem~7.3.1.8]{DGM12} there is a fiber sequence
\[ \K(y(n))_{2}\overset{tr}{\longrightarrow} \TC(y(n))_{2} \longrightarrow  \Sigma^{-1}H\mathbb{Z}_{2} \]
and by~\cite[Corollary~1.5]{NS18} there is a fiber sequence
\[ \TC(y(n))_{2} \longrightarrow \TC^{-}(y(n))_{2}\overset{\can-\varphi}{\longrightarrow}  \TP(y(n))_{2} \,.\]
If  $\TP(y(n))$ and~$\TC^-(y(n))$ are $K(m)$-acyclic, then the algebraic K-theory~$\K(y(n))$ is also $K(m)$-acyclic. On the other hand, to show that (periodic) Morava K-theory $K(m)_*X$ vanishes, it suffices to show that connective Morava K-theory $k(m)_*X$ is $v_m$-torsion. This reduces us to understanding $v_m$-torsion in $k(m)_*\TP(y(n))$ and~$k(m)_*\TC^-(y(n))$. 

Recall that for any associative ring spectrum $R$, $\TP(R)$ (resp. $\TC^-(R)$) is the inverse limit of its Greenlees (resp. skeletal) filtration~\cite{GM95}. If $E$ is a generalized homology theory, the \emph{continuous $E$-homology} $E^c_*\TP(R)$ is obtained by taking the inverse limit of the $E$-homology of this filtration. Note that in full generality, continuous $E$-homology does not agree with $E$-homology, but in some cases this difficulty can be overcome (compare with the work of the first author and Salch~\cite{AKS20}). 

Our main theorem is the following:
\begin{thmx}[{Theorem \ref{thm:continuous-Morava-k-theory-vanishing}}]\label{main theorem}
The $k(m)_{*}$-module $k(m)_{*}^{c}(\TP(y(n)))$ is simple $v_{m}$-torsion for all $0<m\le n$. Moreover, the $k(m)_{*}$-module $k(m)_{*}^{c}(\TC^-(y(n)))$ is simple $v_{m}$-torsion for each $0<m<n$. 
\end{thmx}
 
Analogous to~\cite{BR05}, one might hope to approach $\pi_{*}\TP(y(n))_{2}^{\wedge}$ and $\pi_{*}\TC^{-}(y(n))_{2}^{\wedge}$ using the inverse-limit $k(n)$-based Adams spectral sequences, constructed as in~\cite{LNR12}. However, usually it is difficult to identify the $\mathrm{E}_{2}$-page of the $k(n)$-based Adams spectral sequence. A consequence of our main theorem and work of \cite[Theorem~5.4]{BP25} is that the $\mathrm{E}_{2}$-page of this spectral sequence is computable in terms of homological algebra of quiver representations.  

As stated before, this paper overcomes two technical hurdles: working with ring spectra whose homotopy rings are not understood and working with associative ring spectra. The first hurdle is overcome by working with homology and Margolis homology in order to understand $k(n)$-homology. Much of our technical work involves computing the homology of the topological Hochschild homology of $y(n)$, the continuous homology of $\TC^{-}$ and $\TP$ of $y(n)$, and associated  Margolis homology groups. Our work suggests for example that $\mathrm{TP}$ of $y(n)$ has an interpretation in terms of the $\bT$-Tate construction of a Thom spectrum of higher height, $z(n)/v_{n}$, which we construct during our analysis. Here $\bT\subset \mathbb{C}$ denotes the circle regarded as a topological group with the usual topology. 

In order to execute these computations, we must overcome the second hurdle of working with an associative ring spectrum. This is overcome by a careful understanding of the map $\THH(y(n))\longrightarrow \THH(\mathbb{F}_{2})$ to an $\mathbb{E}_\infty$ ring spectrum. After this paper appeared in preprint form, similar techniques have been employed in work of the first author, Hahn, and Wilson in order to study the algebraic K-theory of Morava K-theory~\cite{AKHW24}. As part of our technical tool set, we also produce a new inverse-limit May--Ravenel spectral sequence, which may be of independent interest. 

\begin{rem}
Unfortunately, Theorem~\ref{main theorem} does not directly imply that $K(m)_{*}\K(y(n))=0$ for $0<m<n$ as desired. See Remark~\ref{rem:continuos-Morava-K-theory-to-Morava-K-theory} for more details about the relationship between the two. Nevertheless, work of Land--Mathew--Meier--Tamme~\cite{LMMT24} proves directly that $K(m)_{*}\K(y(n))=0$ for $0<m<n$, without appealing to trace methods. The approach in their paper is entirely complementary to the approach in our work and we present these results as an alternative that can also shed light on the chromatic complexity of topological periodic cyclic homology, which is closely related to prismatic cohomology~\cite{BMS18,BL22,HRW22}. For example, these methods are used in~\cite{AKS20} to prove $K(m)_{*}F(\mathrm{BP}\langle n\rangle)=0$ for $F\in \{\mathrm{K},\mathrm{TC}^{-},\mathrm{TP}\}$ and $m\ge n+2$ under some conditions on $\mathrm{BP}\langle n\rangle$. We also point out the relevant work of Keenan--McCandless~\cite{KM23} on $K(m)$-acyclicity of topological restriction homology $\mathrm{TR}$. 
\end{rem} 

\begin{rem}
As mentioned above, the idea of ``homological trace methods" was introduced in Bruner--Rognes~\cite{BR05} and studied by Lun{\o}e-Nielsen~\cite{LN05} and Lun{\o}e-Nielsen--Rognes~\cite{LNR11,LNR12}. What distinguishes our results from the results in op.~cit. is the absence of multiplicative structure on $\THH(y(n))$, as well as the additional step of passing from continuous mod $p$ homology to continuous connective Morava K-theory. We also note that the use of homology to study algebraic K-theory predates trace methods altogether, such as in the seminal work of Quillen~\cite{Qui72} and Charney~\cite{Cha80}. 
\end{rem}

\subsection{Outline}

In Section~\ref{Sec:Thom}, we recall the construction and basic properties of the spectra~$y(n)$. 
We show the vanishing of certain Morava K-theory of $y(n)$ using Margolis homology and the localized Adams spectral sequence. We also construct Thom spectra~$z(n)$ which are integral analogs of $y(n)$, i.e. they are spectra which interpolate between the sphere spectrum~$\mathbb{S}$ and the integral Eilenberg--MacLane spectrum~$H\z$. We show that the spectra~$z(n)$ have a self-map~$v_n$ and, again using Margolis homology, we compute vanishing of Morava K-theory of $z(n)$ and the cofiber~$z(n)/v_n$ of this self-map. We believe that the spectra~$z(n)$ are of independent interest, for example see~\cite{Dev24,LMMT24}. 

In Section~\ref{Sec:THH}, we analyze the B{\"o}kstedt spectral sequence converging to the mod two homology of $\THH(y(n))$. We also prove a key technical proposition (Proposition~\ref{Prop:ymapF}) about the map~$H_*(\THH(y(n)))\to H_*(\THH(H\mathbb{F}_2))$ which we use in subsequent sections. 

In Section~\ref{Sec:TP}, we analyze the topological periodic cyclic homology $\TP(y(n)) := \THH(y(n))^{t\bT}$. 
The key tool is the \emph{homological} Tate spectral sequence constructed by Bruner--Rognes~\cite{BR05}. In Section~\ref{Sec:TCm}, we carry out a similar analysis for topological negative cyclic homology $\TC^-(y(n)) := \THH(y(n))^{h\bT}$. 

In Section~\ref{Sec:MR}, we prove the main theorem. Our proof uses a new spectral sequence, the \emph{inverse limit May--Ravenel spectral sequence}, to resolve hidden extensions which arise in the study of continuous homology using the homological Tate spectral sequence and the homological homotopy fixed point spectral sequence (cf.~Remark~\ref{hidden-extensions}). 

\subsection{Conventions}\label{conventions}
We fix $p=2$ throughout. We write $H_*(X)$ (resp. $H^*(X)$) for homology (resp. cohomology) of a space or spectrum~$X$ with coefficients in $\mathbb{F}_2$. 

We write $\ca:=H^*(H\f_2)$ for the ($2$-primary) Steenrod algebra, which is a Hopf algebra with generators~$Sq^{2^i}$ in degree $2i$ and relations given by the Adem relations. The dual of the Steenrod algebra will be denoted $\ca^{\vee} := H_*(H\f_2)$ and it is isomorphic to $P(\bxi_i \mid i\ge 1)$ where $\bxi_{i} := \chi(\xi_i)$  in degree $2^i-1$ is the image of the usual Milnor generators under the antipode~$\chi$ of the Hopf algebra~$\ca^{\vee}$. The coproduct~$\psi \co \ca^{\vee}\to \ca^{\vee}\otimes \ca^{\vee}$ is given by the formula
\begin{equation}\label{coprod dual steenrod} 
\psi(\bxi_k) = \sum_{i+j=k} \bxi_i\otimes \bxi_j^{2^i} \,.
\end{equation}
Let $E(n) := E(Q_0,\ldots,Q_n)$ denote the subalgebra of the Steenrod algebra generated by the first $n+1$ Milnor primitives, where $Q_i$ is in degree $2^i-1$. We let $\ce := E(Q_i : i\ge 0)$
denote the subalgebra generated by all of the Milnor primitives. We write $E(n)^{\vee}$ and $\ce^{\vee}$ for the $\f_2$-linear duals of these subalgebras, respectively. 

When referring to modules over a graded polynomial ring~$P(x)$ with $|x|=2k$  for some integer~$k$, we say that an element in a $P(x)$-module $m\in M$ is \emph{simple~$x$-torsion} if $x\cdot m=0$. We will say that a $P(x)$-module~$M$ is simple~$x$-torsion if every element in $M$ is simple~$x$-torsion. 

We use lowercase letters, e.g., `$\mathrm{fil}$' when discussing filtered spectra, and capitalize the first letter, e.g., `$\mathrm{Fil}$' or `$\mathrm{Gr}$', when discussing the associated filtration or associated graded on homology. 

\subsection{Acknowledgements}
The authors thank Vigleik Angeltveit, Mark Behrens, Teena Gerhardt, Mike Hill, John Rognes, Andrew Salch, and Sean Tilson for helpful discussions. The authors further thank John Rognes and anonymous referees for helpful comments on previous versions. The second author was partially supported by NSF grants DMS-1547292, DMS-2039316, and DMS-2414922, an AMS-Simons Travel Grant, and the Max Planck Institute for Mathematics in Bonn. This project has received funding from
the European Union's Horizon 2020 research and innovation programme under the Marie Sk\l{}odowska-Curie grant agreement No 1010342555. \thinspace \includegraphics[scale=0.1]{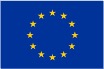}

\section{Families of Thom spectra}\label{Sec:Thom}
In this section, we recall the Thom spectra~$y(n)$ which interpolate between the sphere spectrum and the mod two Eilenberg--MacLane spectrum. We also introduce a family of spectra~$z(n)$ which interpolate between the sphere spectrum and the integral Eilenberg--MacLane spectrum. Basic properties of both families, such as their homology and multiplicative structure, are discussed in Section~\ref{Sec:ynznConstruction}. In Section~\ref{Sec:LocASS}, we recall Margolis homology and the localized Adams spectral sequence, and in Section~\ref{Sec:ynznCC}, we apply them to compute the chromatic complexity of the spectra $y(n)$, $z(n)$, and~$z(n)/v_n$ (Proposition~\ref{cczn}). Though some of the arguments in this section have been improved, the results remain essentially unchanged from the first version of this paper to appear in pre-print form. 

\subsection{Construction of the Thom spectra $y(n)$ and~$z(n)$}\label{Sec:ynznConstruction}
We begin with Mahowald's construction of $H\f_2$ and~$H\z$ as Thom spectra~\cite{Mah79}. Let $f = \Omega^2 w \co \Omega^2 S^3 \to \Omega^2 B^3O \simeq BO$ be the two-fold looping of the generator~$w\co S^3 \to B^3 O$ of $\pi_3(B^3O)\cong \pi_0(O) \cong \z/2$. Recall that for an $\mathbb{E}_{\infty}$~ring spectrum~$R$, one can construct the group-like $\mathbb{E}_\infty$~space $\GL_1R$~\cite{ABGHR14,MQRT77}. When $R=\mathbb{S}$ is the sphere spectrum, its delooping~$B\GL_1\mathbb{S}$ is a model for the classifying space of stable spherical fibrations. 
The classical $J$ homomorphism then gives a map of group-like $\mathbb{E}_\infty$~spaces $J\co O\to \GL_1\mathbb{S}$.
In~\cite[Sec.~2.6]{Mah79}, Mahowald showed that 
\begin{equation}\label{Hf2} 
H\f_2 \simeq \Th \left (\Omega^2 S^3 \overset{f}{\longrightarrow} BO\overset{BJ}{\longrightarrow}B\GL_1\mathbb{S} \right )
\end{equation}
where $\Th(-)$ is the Thom spectrum construction. We refer the reader to~\cite{ABG18} for a modern treatment of the Thom spectrum construction. 

Similarly, Mahowald~\cite[Prop.~2.8]{Mah79} proved that 
\[ H\z \simeq \Th \left (\Omega^2(S^3\langle3\rangle)\to \Omega^2 S^3 \overset{f}{\longrightarrow} BO \overset{BJ}{\longrightarrow} B\GL_1\mathbb{S} \right ) \]
where $S^3\langle3\rangle$ is the fiber of the map~$S^3 \to K(\z,3)$ and~$\iota\co S^3\langle 3\rangle\to S^3$ is the inclusion of the fiber. 

We now produce the spectra~$y(n)$ following~\cite[Sec.~4.5]{Mah79}. 
The James splitting gives an equivalence~$\Omega \Sigma S^2 \simeq J_\infty S^2$ where $J_\infty X$ is the James construction of the space $X$~\cite{Jam55}, so we can rewrite \eqref{Hf2} as $H\f_2 \simeq \Th(\Omega J_\infty S^2 \to B\GL_1\mathbb{S})$.
By truncating the James construction, one can define spectra $J_k S^2$, and there is an obvious inclusion $i_k\co J_k S^2 \hookrightarrow J_\infty S^2$. Taking $k=2^n-1$, one defines
\[ 
y(n) :=\Th \left (\Omega J_{2^n-1} S^2 \overset{f_n }{\longrightarrow}B\GL_1\mathbb{S} \right )
\]
where $f_n=BJ\circ f\circ \Omega i_{2^n-1}$. (Note that one needs to $p$-localize in order to construct $y(n)$ at odd primes, but this is not necessary at the prime $2$.) 

The fiber sequence $J_{2^{n}-1}S^2\to \Omega S^3 \to \Omega S^{2^{n+1}+1}$ implies that the map~$J_{2^{n}-1}S^2\to \Omega S^3$ is $(2^{n+1}-1)$-connected, cf.~\cite{Jam56}. Thus there is a map~$J_{2^n-1}S^2\to K(\z,2)$ given by truncating homotopy groups which is compatible with the map~$J_\infty S^2\to K(\z,2)$. 

\begin{const}\label{const 1}
Let $n \geq 1$. Write $J_{2^n-1}S^2\langle 2 \rangle$ for the fiber of the map~$J_{2^n-1} S^2 \to K(\mathbb{Z},2)$ given by truncating homotopy groups. Define 
\[
z(n) := \Th(\Omega (J_{2^n-1}S^2\langle 2 \rangle) \overset{g_n}{\longrightarrow} B\GL_1\mathbb{S} )
\]
where $g_n=BJ\circ f \circ \Omega i_{2^n-1}\circ \Omega \iota_{2^n-1}$ with $ \iota_k\co J_{k}S^2\langle 2 \rangle \to J_{k}S^2$ is the inclusion of the fiber.
There is a commutative diagram 
\[
\begin{tikzcd}
	\Omega (J_{2^n-1}S^2\langle 2  \rangle ) \arrow{r}{\Omega j_{2^n-1}}  \arrow{d}{\Omega \iota_{2^n-1}} & \Omega^2 S^3\langle3\rangle \arrow{d}{\iota} \arrow{dr}{g} && \\
	\Omega (J_{2^n-1}S^2) \arrow{r}{\Omega i_{2^n-1}} \arrow{d} & \Omega^2 S^3  \arrow{r}{f} \ar[d] & BO \arrow{r}{BJ} &  B\GL_1\mathbb{S}\\
	S^1 \arrow{r}{=} & S^1  & & 
\end{tikzcd}
\] 
where the left two columns are fiber sequences, all the maps in the upper right triangle are $2$-fold loop maps, and all the maps in the upper left square are $1$-fold loop maps. 
\end{const}

After our paper appeared in preprint form, this family of $\mathbb{E}_{1}$ ring spectra has also been considered in work of Devalapurkar~\cite{Dev24} and Land--Meier--Mathew--Tamme~\cite{LMMT24}.

\begin{lem}\label{cor z(n) E1}
The spectra~$z(n)$ are $\mathbb{E}_1$ ring spectra and the diagram 
\[
	\begin{tikzcd}
		z(n)\arrow{rr}{\mathrm{Th}(\Omega j_{2^n-1})} \arrow{d}[swap]{\mathrm{Th}(\Omega \iota_{2^n-1})} && H\mathbb{Z} \arrow{d}{\mathrm{Th}(\iota)} \\
		y(n) \arrow{rr}{\mathrm{Th}(\Omega i_{2^n-1})} && H\mathbb{F}_2 	
	\end{tikzcd}	
\]
is a commutative diagram in the category of $\mathbb{E}_1$ ring spectra for $n\ge 1$.
\end{lem}
\begin{proof}
This immediately follows from Lewis's theorem~\cite[Ch. IX]{LMS06} and Construction~\ref{const 1}. 
\end{proof}

The homology of the spectra~$y(n)$ interpolate between the homology of the sphere spectrum~$\mathbb{S} \simeq y(0)$ and the homology of the mod $2$ Eilenberg--MacLane spectrum~$H\f_2 \simeq y(\infty)$. Similarly, the homology of the spectra~$z(n)$ interpolate between the homology of $z(1)$ and the homology of the integral Eilenberg--MacLane spectrum~$z(\infty)=H\z$. More generally, we have the following ladder of interpolations between the sphere spectrum and the mod $2$ and integral Eilenberg--MacLane spectra:
\[
\begin{tikzcd}
& z(1) \arrow{r} \arrow{d} &  z(2) \arrow{r} \arrow{d}&  \cdots \arrow{r} & z(\infty) = H\z\arrow{d}\\
\mathbb{S}=y(0) \arrow{r} \arrow{ur} &y(1) \arrow{r}& y(2) \arrow{r} &\cdots \arrow{r} &y(\infty) = H\f_2  .
\end{tikzcd}
\]
Note that the structure of $\ca^{\vee}$ as an $\ca^{\vee}$-comodule is given by the coproduct~$\psi$, and more generally, the coaction on any sub-Hopf algebra of $\ca^{\vee}$ is defined to be the restriction of the coproduct. 
\begin{lem}\label{lem:homology}
The maps $\mathrm{Th}(\Omega i_{2^n-1})$ and~$\mathrm{Th}(\Omega j_{2^n-1})$ from Construction~\ref{const 1} induce isomorphisms onto their image in $H_*(H\f_2)$ and~$H_*(H\z)$ respectively,
\begin{align*}
H_*(y(n)) &\cong P(\bxi_1,\bxi_2,\ldots,\bxi_n), \\
H_*(z(n)) &\cong P(\bxi^2_1,\bxi_2,\ldots,\bxi_n),
\end{align*}
for each $1 \le n\le \infty$, where $|\bxi_i| = 2^i-1$ and $|\bxi_1^2| = 2$. The map~$\Th(\Omega \iota_{2^n-1} )\colon \thinspace z(n) \to y(n)$ also induces the evident inclusion in homology as a map of $\ca^{\vee}$-comodules.
\end{lem}
\begin{proof}
By the Thom isomorphism, there are isomorphisms
\[H_*(y(n))  \cong H_*(\Omega J_{2^n-1}S^2 ) \text{ and } H_*(z(n)) \cong H_*(\Omega( J_{2^n-1}S^2\langle 2  \rangle) ).\]
The Serre spectral sequence arising from the path-loops fibration has signature
\begin{align}
\label{serre yn}
\mathrm{E}^2_{p,q}= H_p(J_{2^n-1}S^2; H_q(\Omega J_{2^n-1} S^2) ) \Longrightarrow H_{p+q}(P J_{2^n-1} S^2) = \f_2\{1\}
\end{align}
and there is a map of Serre spectral sequences from this spectral sequence to 
\begin{align}
\label{serre hfp} \mathrm{E}^2_{p,q}= H_p(\Omega S^3; H_q(\Omega^2S^3) ) \Longrightarrow H_{p+q}(P \Omega S^3) = \f_2\{1\}.
\end{align}
The latter computation, along with its $\ca^{\vee}$-coaction, follows by~\cite[Thm.~4]{CMT81} and the map of spectral sequences is a map of $\ca^{\vee}$-comodules on the $\mathrm{E}_2$-page
\[ 
P_{2^n-1}(x)\otimes H_*(\Omega J_{2^n-1}S^2)\hookrightarrow  P(x)\otimes H_*(\Omega^2S^3)
\]
which must be a monomorphism by direct inspection of the differentials forced by the triviality of the abutment. The result then follows by naturality of the Thom isomorphism applied to the map~$\mathrm{Th}(\Omega i_{2^n-1})_*\colon \thinspace H_*(y(n))\to H_*(H\f_2)$. The remaining statements are proven in exactly the same way, so we omit the proof. 
\end{proof}

Recall that a finite $2$-local spectrum $F$ has type $m+1$ if $K(m+1)_{*}F\ne 0$ and $K(m)_{*}F=0$ (which implies $K(s)_{*}F=0$ for $0\le s\le m-1$ since $F$ is finite). Following~\cite{MR99}, we say that a $2$-complete spectrum $X$ has \emph{fp type $m$} if there exists a finite $2$-local spectrum $F$ of type $m+1$ such that $\pi_{k}F\otimes X$ is a finite abelian group for each integer $k$ and only nontrivial for finitely many integers $k$.

\begin{cor}\label{not fp type m}
The spectra $y(n)_{2}^{\wedge}$ and~$z(n)_{2}^{\wedge}$ are not fp type~$m$ for any finite~$m$.
\end{cor}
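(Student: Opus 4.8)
The plan is to invoke the characterization of fp-type recalled in Item \eqref{fptype} of the introduction, together with the explicit computation of $H_*(y(n))$ and $H_*(z(n))$ just established. Recall that by Mahowald--Rezk \cite[Prop. 3.2]{MR99}, a spectrum with finitely presented mod $2$ cohomology over $\ca$ admits a nontrivial finite spectrum $F$ with $\pi_*(F\wedge X)$ finite; so to prove the corollary it suffices to show that $H^*(y(n))$ and $H^*(z(n))$ are \emph{not} finitely presented as $\ca$-modules (equivalently, $H_*(y(n))$ and $H_*(z(n))$ are not finitely presented $\ca_*$-comodules). First I would record that $H_*(y(n)) \cong P(\bxi_1,\dots,\bxi_n)$ is a polynomial algebra on $n\ge 1$ generators, hence infinite-dimensional over $\f_2$, and similarly for $H_*(z(n)) \cong P(\bxi_1^2,\bxi_2,\dots,\bxi_n)$.

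The key step is then to argue that a connective spectrum with infinite-dimensional but finitely presented mod $2$ cohomology cannot exist in this situation — more precisely, that these particular comodules fail finite presentation. The cleanest route is: a finitely presented $\ca_*$-comodule (equivalently $\ca$-module on the cohomology side) that is bounded below must, after inverting a suitable $v_n$, become the homology of a finite spectrum wedge, and in particular its Margolis homologies $H(H_*(X);Q_i)$ must be finite-dimensional for all $i$ and zero for all but finitely many $i$; see the discussion of Margolis homology and the localized Adams spectral sequence referenced in Section \ref{Sec:LocASS}. For $y(n)$ one computes $H_*(y(n)) \cong P(\bxi_1,\dots,\bxi_n)$ as an $\ce_*$-comodule and finds (this is exactly the input to Proposition \ref{cczn}) that the Margolis homology with respect to $Q_i$ is nonzero precisely for $0 \le i \le n-1$ and in each such case is \emph{infinite-dimensional} — it is again a polynomial-type algebra on the relevant generators. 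An infinite-dimensional Margolis homology group is incompatible with finite presentation of the comodule, since for a finitely presented $\ca_*$-comodule each $Q_i$-Margolis homology is finite-dimensional. The same computation for $z(n)$, using $H_*(z(n)) \cong P(\bxi_1^2,\bxi_2,\dots,\bxi_n)$, gives the analogous conclusion. Hence neither $y(n)$ nor $z(n)$ has finitely presented cohomology, so neither has fp-type $m$ for any finite $m$.

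Alternatively — and perhaps more transparently — one can appeal directly to \cite{MR99}: if $y(n)$ had fp-type $m$ there would be a finite type $m+1$ spectrum $F$ with $\pi_*(F \wedge y(n))$ finite, hence $H_*(F\wedge y(n)) \cong H_*(F)\otimes H_*(y(n))$ would be finite-dimensional over $\f_2$; but $H_*(y(n))$ is infinite-dimensional and $H_*(F) \ne 0$, a contradiction. The same argument applies verbatim to $z(n)$. This is really the one-line proof; the Margolis homology discussion above is only needed if one wants the sharper statement locating where finite presentation fails.

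The main obstacle is simply making sure one is quoting the correct form of the Mahowald--Rezk dichotomy — specifically that "$\pi_*(F\wedge X)$ finite" forces "$H_*(F\wedge X)$ finite-dimensional," which uses connectivity of $y(n)$ and $z(n)$ and the fact that finite groups in finitely many degrees have finite-dimensional mod $2$ homology. Given that $y(n)$ and $z(n)$ are connective (being Thom spectra over connected base spaces) this is routine, so there is no real difficulty; the corollary is essentially immediate from the preceding lemma.
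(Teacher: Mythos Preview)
Your second, ``one-line'' argument contains a genuine gap. You claim that if $\pi_*(F \wedge y(n))$ is finite (nonzero in only finitely many degrees, and a finite group in each), then $H_*(F \wedge y(n); \f_2)$ must be finite-dimensional. This implication is false: for instance, the spectrum $H\z/4$ has $\pi_*(H\z/4) = \z/4$ concentrated in degree zero, yet $H_*(H\z/4; \f_2)$ is infinite-dimensional (from the cofiber sequence $H\z \xrightarrow{4} H\z \to H\z/4$, with multiplication by $4$ acting as zero on mod $2$ homology, one reads off $H_*(H\z/4) \cong H_*(H\z) \oplus \Sigma H_*(H\z)$ additively). So there is no contradiction in having $H_*(F) \otimes H_*(y(n))$ infinite-dimensional while $\pi_*(F \wedge y(n))$ is finite; the argument does not close.

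Your first argument also has problems. You assert that $H(y(n); Q_i)$ is nonzero precisely for $0 \le i \le n-1$ and infinite-dimensional there; in fact the situation is the reverse (see Lemma \ref{mhyn}): the Margolis homology vanishes for $0 \le i \le n-1$ and equals all of $H_*(y(n))$ for $i \ge n$. Even with the indices corrected, your strategy requires the lemma that a finitely presented bounded-below $\ca_*$-comodule has finite-dimensional Margolis homology with respect to each $Q_i$, which you assert but do not justify.

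The paper's proof is shorter and sidesteps both issues: it observes directly that the cokernel of the inclusion $H_*(y(n)) \hookrightarrow \ca_*$ is not finitely generated as an $\ca_*$-comodule, so $H_*(y(n))$ is not finitely presented, and then invokes \cite[Prop.~3.2]{MR99}. Since fp-type is only defined for spectra with finitely presented cohomology, this finishes the argument immediately.
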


\begin{proof}
The cokernels of the inclusions $H_*(y(n)_{2}^{\wedge})\to \ca^{\vee}$ and~$H_*(z(n)_{2}^{\wedge})\to \ca^{\vee}$ are not finitely generated as $\ca^{\vee}$-comodules, so $H_{*}(y(n)_{2}^{\wedge})$ and~$H_{*}(z(n)_{2}^{\wedge})$ are not finitely presented as comodules over the dual Steenrod algebra. The result then follows by \cite[Prop.~3.2]{MR99}.
\end{proof}

From the homology calculation above, we can also determine that $y(n)$ and $z(n)$ are not highly structured ring spectra in the following sense. 

\begin{cor}
The $\mathbb{E}_1$~algebra structure on $y(n)$ and $z(n)$ cannot be extended to an $\mathbb{E}_2$~algebra structure. 
\end{cor}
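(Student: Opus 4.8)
The plan is to deduce the non-existence of an $E_2$ structure directly from the homology computation $H_*(y(n)) \cong P(\bxi_1, \ldots, \bxi_n)$ (and similarly $H_*(z(n)) \cong P(\bxi_1^2, \bxi_2, \ldots, \bxi_n)$), using the fact that the mod $2$ homology of an $E_2$ ring spectrum carries a Dyer--Lashof operation $Q_1$ (more precisely, the bottom operation in the Araki--Kudo/Dyer--Lashof structure coming from the $\mathcal{E}_2$-action, which raises degree by $|x|+1$ on a class $x$ of the appropriate parity). The key numerical obstruction is that $Q_1$ applied to the generator $\bxi_1$ in degree $1$ lands in degree $3$, and one checks that $H_3(y(n)) = \f_2\{\bxi_1^3\}$ for $n \geq 1$, so $Q_1(\bxi_1)$ would have to equal $\bxi_1^3$; but one can rule this out by comparing with $H\f_2 = y(\infty)$, where the Dyer--Lashof action is known and $Q_1(\bxi_1) = \bxi_2 \neq \bxi_1^3$ in $\ca_* = H_*(H\f_2)$. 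Since $H_*(y(n)) \to \ca_*$ is a map of $\ca_*$-comodule algebras that would necessarily be compatible with the Dyer--Lashof operations if both $y(n)$ and $H\f_2$ were $E_2$ (the map $y(n) \to H\f_2$ being a map of $E_2$ rings in that case), and $\bxi_2 \notin \mathrm{im}(H_*(y(n)) \to \ca_*)$ for finite $n$, we get a contradiction.

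More concretely, here are the steps I would carry out. First, recall that for an $E_2$ ring spectrum $R$, the homology $H_*(R;\f_2)$ is an algebra over the Dyer--Lashof-type operations arising from the little $2$-cubes operad, so in particular there is a natural operation $Q_1$ (raising degree by one on classes in odd degree, or by the appropriate amount dictated by the $\mathcal{E}_2$-structure) which is compatible with maps of $E_2$ ring spectra. Second, observe that the composite $y(n) \to H\f_2$ is a map of $E_1$ rings refining to $E_2$ rings if $y(n)$ were $E_2$, since $H\f_2$ is $E_\infty$; hence the induced map in homology would intertwine the $Q_1$ operations. Third, compute $Q_1(\bxi_1)$ in $\ca_* = H_*(H\f_2)$: this is the classical formula $Q_1(\bxi_1) = \bxi_2$ (the standard Dyer--Lashof action on the dual Steenrod algebra; see Steinberger's computation in the $E_\infty$ structure of $H\f_2$). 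Fourth, note that $\bxi_1 \in H_1(y(n))$ maps to $\bxi_1 \in \ca_*$, so $Q_1(\bxi_1) \in H_3(y(n))$ must map to $\bxi_2$. But from $H_*(y(n)) \cong P(\bxi_1, \ldots, \bxi_n)$ the degree-$3$ part is spanned by $\bxi_1^3$, and $\bxi_1^3 \mapsto \bxi_1^3 \neq \bxi_2$ in $\ca_*$ (these are distinct monomials in the polynomial algebra $\ca_*$). This contradiction shows $y(n)$ admits no $E_2$ structure. The argument for $z(n)$ is identical after replacing $\bxi_1$ by $\bxi_1^2$: one uses $Q_3(\bxi_1^2) = \bxi_2^2$ (or the Cartan formula $Q_1(\bxi_1^2) = (Q_1 \bxi_1)\cdot \bxi_1 \cdot 0 + \ldots$ appropriately, i.e. one tracks the relevant operation landing on $\bxi_2$-type classes), checks the target degree is not in the image of $H_*(z(n)) \to \ca_*$ for finite $n$, and concludes.

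The main obstacle I anticipate is pinning down precisely which Dyer--Lashof operation to use and verifying it is genuinely carried by an $E_2$ (as opposed to $E_\infty$) structure. The operations $Q_i$ on the homology of an $E_n$ ring are only defined in a range depending on $n$: for an $E_2$ ring over $\f_2$ one gets $Q_i$ for $i \leq 1$ beyond the multiplication (the top operation $Q_1$ on an odd class being the relevant Browder-bracket-adjacent operation), which is exactly enough for the argument on the degree-$1$ generator $\bxi_1$, but one must be careful with the indexing conventions and with the integral case $z(n)$ where the generator is in degree $2$. I would resolve this by citing the standard references on operations in the homology of $E_n$-rings (e.g. Cohen--Lada--May, or the treatment in terms of power operations) and by using naturality along $y(n) \to H\f_2$ to transport the known computation rather than computing anything on $y(n)$ directly; the only input particular to $y(n)$ is the homology calculation already established, together with the elementary fact that the relevant target monomial ($\bxi_2$, or $\bxi_2^2$) is not a polynomial expression in $\{\bxi_1^{(2)}, \bxi_2, \ldots, \bxi_n\}$ and hence lies outside the image for finite $n$. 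A secondary, more routine point is to confirm that the map $y(n)\to H\f_2$ would indeed be $E_2$: this follows since any $E_1$-map from an $E_2$-ring to an $E_\infty$-ring that is a map of $E_1$-rings and is the Thom-spectrum map can be upgraded, but in fact for the obstruction all one needs is that $Q_1$ is natural for $E_2$-maps and the universal property of the Thom spectrum makes $y(n) \to H\f_2$ an $E_2$-map whenever $y(n)$ is $E_2$.
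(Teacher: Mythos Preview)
Your approach is essentially the same as the paper's: use the top Dyer--Lashof operation available on an $E_2$ ring and naturality along the map to $H\f_2$ (resp.\ $H\z$) to produce a class that must land outside the image. However, there is a genuine gap in the execution. You apply $Q_1$ to $\bxi_1$ and claim that $H_3(y(n))$ is spanned by $\bxi_1^3$. This is only true for $n=1$: for $n\ge 2$ one has $|\bxi_2|=3$, so $\bxi_2\in H_3(y(n))$ and your contradiction evaporates (indeed $Q_1(\bxi_1)=\bxi_2$ lies in $H_*(y(n))$ with no inconsistency). The fix is to apply the top operation to the \emph{top} generator $\bxi_n$ rather than to $\bxi_1$: one has $Q^{|\bxi_n|+1}(\bxi_n)=\bxi_{n+1}$ in $\ca_*$, and $\bxi_{n+1}$ is never in the image of $H_*(y(n))\hookrightarrow\ca_*$. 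This is exactly what the paper does.

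Two further points. First, your treatment of $z(n)$ invokes $Q_3$, which is not available on an $E_2$ ring; only the single nontrivial operation $Q^{|x|+1}$ is. Again the correct move is to apply it to $\bxi_n$ (for $n\ge 2$ this lies in $H_*(z(n))$), landing on $\bxi_{n+1}\notin H_*(z(n))$. Second, your concern about whether $y(n)\to H\f_2$ would be an $E_2$ map is handled cleanly in the paper by taking the Postnikov truncation $y(n)\to H\pi_0(y(n))=H\f_2$ in the category of $E_2$ algebras; this automatically produces an $E_2$ map, sidestepping any Thom-spectrum or universal-property argument.
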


\begin{proof}
We give the proof for $y(n)$; the proof for $z(n)$ is similar. An extension of the $\mathbb{E}_1$~algebra structure to an $\mathbb{E}_2$~algebra structure implies an extension of the $H_1$ algebra structure to an $H_2$ algebra structure. If the $\mathbb{E}_{1}$ algebra structure on $y(n)$ can be extended to an $\mathbb{E}_{2}$ algebra structure on $y(n)$, then the operation $Q^{|x|+1}(x)$ on $H_{*}(y(n))$ is well-defined by~\cite[III.3.1,~III.3.2,~III.3.3]{BMMS06} (cf.~\cite[Thm.~5.2]{Law20}). We will show that this leads to a contradiction.

Suppose the $\mathbb{E}_1$~algebra structure of $y(n)$ extends to an $\mathbb{E}_2$~algebra structure. Then we may form a Postnikov truncation in the category of $\mathbb{E}_2$ algebras to produce a map~$y(n)\to H\pi_0(y(n))=H\f_2$. 
This map induces an inclusion~$H_*(y(n))\hookrightarrow \ca$ and therefore sends $\bxi_n\in H_{2^n-1}(y(n))$ to $\bxi_n\in \ca^\vee$. This inclusion must be compatible with $Q^{|x|+1}$. However, we know that $Q^{|\bxi_n|+1}(\bxi_n)=\bxi_{n+1}$ in $H_*(H\mathbb{F}_2)$ by \cite[Thm. 2.2]{BMMS06}, but $\bxi_{n+1}$ is not in the image of the inclusion. We can conclude that the $\mathbb{E}_1$~algebra structure of $y(n)$ cannot be extended to an $\mathbb{E}_2$ algebra structure. For $z(n)$, the argument is the same except that we consider the Postnikov truncation in $\mathbb{E}_2$~algebras $z(n)\to H\pi_0(z(n))=H\mathbb{Z}$, which also induces an inclusion in homology~$H_*(z(n))\hookrightarrow (\ca//E(Q_0))_*$ and the rest of the argument is the same.
\end{proof}

The fact that $y(n)$ is not an $\mathbb{E}_2$~ring spectrum will play a key role in Section~\ref{Sec:THH}. 
\begin{conv}
We will often use the unit map~$S \to y(n)$ as well as the map
\[\mathrm{Th}(\Omega i_{2^n-1})\colon \thinspace y(n) \to H\f_2\] 
induced by the inclusion~$J_{2^n-1} S^2 \hookrightarrow J_\infty S^2 \simeq \Omega S^3$. From this point on, any map~$y(n)\to H\f_2$ without decoration refers to the latter. From this point on we also write $y(n)$ for the $2$-completion of $y(n)$ and $z(n)$ for the $2$-completion of $z(n)$. 
\end{conv}

\begin{lem}\label{Lem:ynF2}
The map~$y(n) \to H\f_2$ is $(2^{n+1}-2)$-connected.
\end{lem}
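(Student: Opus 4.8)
The plan is to deduce the connectivity of $y(n)\to H\f_2$ from the connectivity of the map of James constructions $J_{2^n-1}S^2\to J_\infty S^2\simeq \Omega S^3$, which in turn comes from the EHP-type fiber sequence $J_{2^n-1}S^2\to \Omega S^3\to \Omega S^{2^{n+1}+1}$ already recalled in the excerpt. Since $\Omega S^{2^{n+1}+1}$ is $(2^{n+1}-1)$-connected, the map $J_{2^n-1}S^2\to \Omega S^3$ is $(2^{n+1}-1)$-connected; looping once, $\Omega J_{2^n-1}S^2\to \Omega^2 S^3$ is $(2^{n+1}-2)$-connected. The Thom spectrum functor applied to the diagram over $BGL_1 S$ preserves this connectivity (the Thom spectrum of a map over $BGL_1S$ is built from the base, and a highly connected map of base spaces over a fixed target induces an equally connected map of Thom spectra — concretely, use that $H_*(Th(f_n))\cong H_*(\Omega J_{2^n-1}S^2)$ by the Thom isomorphism and similarly for $H\f_2$, and that a connectivity statement for connective spectra can be checked on mod $2$ homology together with a fundamental-class argument, or simply invoke functoriality of Thom spectra on fibrations). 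Hence $y(n)\to H\f_2$ is $(2^{n+1}-2)$-connected.

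Concretely, I would proceed as follows. First, record that the composite $\Omega J_{2^n-1}S^2\to \Omega^2 S^3\simeq \Omega^2 J_\infty S^2$ realizing the map $y(n)\to H\f_2$ at the level of Thom data is $(2^{n+1}-2)$-connected, by looping the $(2^{n+1}-1)$-connected map $J_{2^n-1}S^2\to \Omega S^3$ coming from the fiber sequence $J_{2^n-1}S^2\to \Omega S^3\to \Omega S^{2^{n+1}+1}$. Second, observe that the Thom spectrum construction takes this $(2^{n+1}-2)$-connected map of spaces over $BGL_1 S$ to a $(2^{n+1}-2)$-connected map of spectra: one way is to note both $y(n)$ and $H\f_2$ are connective, so it suffices to check that $H_*(y(n);\f_2)\to H_*(H\f_2;\f_2)$ is an isomorphism in degrees $\le 2^{n+1}-2$ and injective (hence an isomorphism is not needed, just the relevant vanishing of relative homology) in degree $2^{n+1}-1$; by the previous lemma this map is the inclusion $P(\bxi_1,\dots,\bxi_n)\hookrightarrow \ca_*$, and since $|\bxi_{n+1}|=2^{n+1}-1$ the quotient $\ca_*/H_*(y(n))$ vanishes in degrees $<2^{n+1}-1$ and is $\f_2$ in degree $2^{n+1}-1$, which is exactly the statement that the map is $(2^{n+1}-2)$-connected.

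The cleanest route, which is the one I expect the authors take, is the second: simply read off the connectivity from the homology computation in the preceding lemma, using that $H_*(y(n))\cong P(\bxi_1,\dots,\bxi_n)$ sits inside $\ca_* \cong P(\bxi_i\mid i\ge 1)$ and the first missing polynomial generator is $\bxi_{n+1}$ in degree $2^{n+1}-1$. Combined with connectivity of both spectra and the Hurewicz theorem, this gives that the fiber of $y(n)\to H\f_2$ is $(2^{n+1}-2)$-connected, i.e. the map is $(2^{n+1}-2)$-connected.

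The main obstacle — really the only subtlety — is making precise the relationship between "connectivity of a map of Thom data over $BGL_1 S$" and "connectivity of the resulting map of Thom spectra," and deciding whether to argue via the James fiber sequence and the Thom functor's behavior on fibrations, or to sidestep this entirely by invoking the homology computation of the preceding lemma. The homology argument is self-contained given what is already proved, so I would use it: the degree of the first polynomial generator $\bxi_{n+1}$ of $\ca_*$ not present in $H_*(y(n))$ is $2^{n+1}-1$, and for a map of connective spectra inducing an iso on $\f_2$-homology below degree $d$ and nothing new in degree $d-1$ of the cofiber, one concludes $(d-1)$-connectivity after checking the same over $\q$ (trivial here, as both spectra are rationally trivial above degree $0$ and the map is an iso on $\pi_0$) — hence $y(n)\to H\f_2$ is $(2^{n+1}-2)$-connected.
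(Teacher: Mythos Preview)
Your argument is correct and reaches the same conclusion, but the paper takes a slightly different route. The paper compares the Adams spectral sequences for $y(n)$ and $H\f_2$: since the $E_1$-pages (cobar complexes on $H_*(y(n))$ and $\ca_*$) agree through internal degree $2^{n+1}-2$, the $E_2$-terms agree in stems $\le 2^{n+1}-2$; the Adams spectral sequence for $H\f_2$ collapses to $\f_2$ in stem $0$, so $\pi_i(y(n))=0$ for $0<i\le 2^{n+1}-2$. Your approach instead reads the connectivity directly from the homology inclusion $P(\bxi_1,\dots,\bxi_n)\hookrightarrow \ca_*$ via Hurewicz (or, in your first variant, from the $(2^{n+1}-2)$-connectivity of $\Omega J_{2^n-1}S^2\to \Omega^2 S^3$ and functoriality of Thom spectra). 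Both routes ultimately rest on the same homology fact; yours is marginally more elementary since it avoids setting up the Adams spectral sequence, while the paper's phrasing makes the convergence issue disappear into the standard machinery.

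One small gap to patch: your assertion that checking $\f_2$- and $\q$-homology suffices to deduce connectivity of a map of connective spectra is false in general (e.g.\ $H\z/3\to *$). What makes it work here is that $y(n)$ and $H\f_2$ are ring spectra with $\pi_0=\f_2$, so multiplication by $2$ is null on each and hence on the cofiber; for a bounded-below spectrum on which $2=0$, vanishing of $\f_2$-homology below degree $d$ forces vanishing of homotopy below degree $d$ by the integral Hurewicz theorem (the first nonzero homotopy group would be an $\f_2$-vector space, hence visible in $\f_2$-homology). This is the same reason the mod-$2$ Adams spectral sequence converges to $\pi_*(y(n))$ rather than its $2$-completion, which is how the paper implicitly handles the identical issue.
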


\begin{proof}
The Adams spectral sequence converging to $y(n)_*$ has the form
\[ 
\Ext^{s,t}_{\ca^{\vee}}(\f_2,P(\bxi_1,\bxi_2,\ldots,\bxi_n)) \Longrightarrow y(n)_{t-s}
\]
and the Adams spectral sequence converging to $(H\f_2)_*$ has the form
\[ 
\Ext^{*,*}_{\ca^{\vee}}(\f_2,P(\bxi_1,\bxi_2,\ldots)) \cong \f_2 \Longrightarrow (H\f_2)_{*} = \f_2.
 \]
Let $C_\bullet(n)$ denote the $\mathrm{E}_1$-page of the Adams spectral sequence for $y(n)$ and let $C_\bullet(\infty)$ denote the $\mathrm{E}_1$-page for the Adams spectral sequence for $H\f_2$. These two $\mathrm{E}_1$-pages differ only in stems above the degree of $\bxi_{n+1}$. Since $|\bxi_{n+1}| = 2^{n+1}-1$, the resulting $\mathrm{E}_2$-pages agree up to stem~$2^{n+1}-2$. Since the second spectral sequence collapses at the $\mathrm{E}_2$-page, we conclude that the map~$\pi_i(y(n)_{2}^{\wedge}) \to \pi_i(H\f_2)$ is an isomorphism for $i <2^{n+1}-2$ and surjective when $i=2^{n+1}-2$.
\end{proof}

\subsection{The localized Adams spectral sequence}\label{Sec:LocASS}
In this section, we recall the localized Adams spectral sequence~\cite{MRS01, Mil81} and discuss its applications to the study of chromatic complexity. Recall that the $m$-th connective Morava K-theory~$k(m)$ has homology~$H_*(k(m)) \cong \ca^{\vee} \square_{E(\bxi_{m+1})} \f_2$ where $\bxi_{m+1}$ is dual to the $m$-th Milnor primitive~$Q_m$. The Adams spectral sequence converging to $k(m)_*(X)$ has the form
\[
\mathrm{E}_2^{s,t} = \Ext^{s,t}_{E(\bxi_{m+1})}(\f_2,H_*(X)) \Longrightarrow k(m)_{t-s}(X)
\]
by the K{\"u}nneth isomorphism and the change-of-rings isomorphism. The spectral sequence collapses for degree reasons when $X$ is the sphere spectrum to show that $k(m)_* \cong P(v_m)$ with $|v_m| = 2^{m+1}-2$.

Morava K-theory~$K(m)$ with $K(m)_* \cong \f_2[v_m^{\pm 1}]$ can then be constructed as the telescope
\[ 
K(m) = \widehat{k(m)} = \hocolim \left(k(m) \overset{v_m}{\longrightarrow} \Sigma^{-(2^{m+1}-2)} k(m) \overset{v_m}{\longrightarrow} \cdots \right) \,.
\]
Since smash product commutes with filtered colimits, the spectrum~$K(m) \wedge X$ is the telescope of the self-map~$v_m \wedge id_X$ on $k(m) \wedge X$. 

The homotopy groups of a telescope can sometimes be computed using the localized Adams spectral sequence introduced in~\cite{Mil81}. Our recollection follows~\cite{MRS01}. 

\begin{const}
Let $Y$ be a spectrum with a $v_n$ self-map $f \co Y \to \Sigma^{-d}Y$, let $\widehat{Y}$ be the telescope of $f$, and let
\begin{equation}\label{eq:Adams-resolution-Y}
Y = Y_0 \leftarrow Y_1 \leftarrow Y_2 \leftarrow \cdots
\end{equation}
be an Adams resolution of $Y$. Suppose there is a lifting~$\tilde{f} : Y \to \Sigma^{-d} Y_{s_0}$ for some~$s_0 \geq 0$. This lifting induces maps~$\tilde{f} : Y_s \to \Sigma^{-d} Y_{s + s_0}$ for each $Y_s$ in the Adams resolution \eqref{eq:Adams-resolution-Y}. Iterate these maps to define telescopes~$\widehat{Y_s}$ for $s\geq 0$ and set $Y_s = Y$ for $s < 0$ to produce a tower
\[
\cdots \leftarrow \widehat{Y_{-1}} \leftarrow \widehat{Y_0} \leftarrow \widehat{Y_1} \leftarrow \cdots \,.
\]
The resulting conditionally convergent full plane spectral sequence
\[ 
v_m^{-1} \Ext^{s,t}_{\ca^{\vee}}(\f_2,H_*(Y)) \Longrightarrow \pi_{t-s}(\widehat{Y})
\]
is the \emph{localized Adams spectral sequence.}
\end{const}

\begin{thm}[{\cite[Thm.~2.13]{MRS01}}]\label{MRS thm}
For a spectrum~$Y$ equipped with maps~$f$ and $\tilde{f}$ as above, in the localized Adams spectral sequence for $\pi_*(\widehat{Y})$ we have
\begin{itemize}
\item The homotopy colimit~$\underset{s}{\hocolim \thinspace} \widehat{Y}_{-s}$ is the telescope~$\widehat{Y}$.
\item The homotopy limit~$\underset{s} {\holim \thinspace}\widehat{Y_s}$ is contractible if the original (unlocalized) Adams spectral sequence has a vanishing line of slope $\frac{s_0}{d}$ at $E_r$ for some finite $r$, i.e. if there are constants $c$ and~$r$ such that 
\[
E^{s,t}_r  =0 \quad \text{ for } \quad s > c+(t-s)(s_0/d) \,.
\]
(In this case, we say $f$ has a \emph{parallel lifting}~$\tilde{f}$.)
\item If $f$ has a parallel lifting, this localized Adams spectral sequence strongly converges to $\pi_*(\widehat{Y})$. 
\end{itemize}
\end{thm}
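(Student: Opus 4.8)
The plan is to treat the localized Adams spectral sequence as the homotopy spectral sequence of the inverse tower $\{\widehat{Y_s}\}_{s\in\z}$, and to prove the three bullet points by identifying the homotopy colimit and the homotopy limit along this tower and then invoking Boardman's convergence criterion. The first bullet is immediate from the construction: since $Y_s=Y$ for $s<0$, we have $\widehat{Y_s}=\widehat Y$ for all $s\le 0$ with identity transition maps in that range, so $\hocolim_s\widehat{Y_{-s}}\simeq\widehat Y$; this step uses nothing about the self-map.

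The real work is the second bullet, contractibility of $\holim_s\widehat{Y_s}$ under a parallel lifting $\tilde f\co Y\to\Sigma^{-d}Y_{s_0}$. I would fix a stem $n$ and use that iterating $\tilde f$ presents $\widehat{Y_s}$ as the telescope of the spectra $\Sigma^{-kd}Y_{s+ks_0}$, so that $\pi_n(\widehat{Y_s})=\colim_k\pi_{n+kd}(Y_{s+ks_0})$. Each $Y_{s+ks_0}$ is bounded below of finite type, hence has a convergent Adams spectral sequence whose $E_\infty$-term in stem $n+kd$ is concentrated in filtrations $s'$ with $s'\ge s+ks_0$ (it sits that far up the Adams tower) and $s'\le c+(n+kd)\,s_0/d=c+n\,s_0/d+ks_0$ (by the vanishing line of slope $s_0/d$ at $E_r$, which descends to the truncated tower). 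These two bounds are incompatible as soon as $s>c+n\,s_0/d$, uniformly in $k$; hence $\pi_{n+kd}(Y_{s+ks_0})=0$ for all $k$ and $\pi_n(\widehat{Y_s})=0$ for every such $s$. Thus $\{\pi_n(\widehat{Y_s})\}_s$ is eventually zero, its $\lim_s$ and $\lim^1_s$ vanish, and the Milnor sequence gives $\pi_n(\holim_s\widehat{Y_s})=0$ for all $n$, so $\holim_s\widehat{Y_s}\simeq *$. I expect this to be the main obstacle: one has to line the slope of the vanishing line up exactly with the bidegree shift $(d,s_0)$ of $\tilde f$ — connectivity estimates alone are useless here, since the tower index $s+ks_0$ and the degree $n+kd$ grow in lockstep — and one has to be careful about the convergence of the Adams spectral sequences for the increasingly highly connected spectra $Y_{s+ks_0}$ and about how $Y$'s vanishing line passes to them.

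For the third bullet, once $\holim_s\widehat{Y_s}$ and (by the same eventual vanishing) its derived limit are trivial, the tower spectral sequence is conditionally convergent to $\pi_*(\hocolim_s\widehat{Y_{-s}})=\pi_*(\widehat Y)$ in Boardman's sense. I would then observe that the localized $E_2$-term $v_m^{-1}\Ext_{\ca}(H^*(Y),\f_2)$ still carries a vanishing line inherited from the unlocalized Adams spectral sequence, which forces the derived $E_\infty$-term to vanish and thereby upgrades conditional convergence to strong convergence, giving the stated identification of the abutment.
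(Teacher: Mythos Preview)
The paper does not prove this theorem at all; it is quoted verbatim from \cite[Thm.~2.13]{MRS01} and used as a black box. So there is no ``paper's own proof'' to compare against, and your outline is essentially the standard argument one finds in Miller's original paper and in Mahowald--Ravenel--Shick.

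That said, there is one genuine gap in your second bullet that you flag but do not resolve. You assert that the vanishing line for $Y$ at $E_r$ ``descends to the truncated tower'' and conclude $\pi_{n+kd}(Y_{s+ks_0})=0$ outright. This is too strong. The Adams resolution $Y_{s+ks_0}\leftarrow Y_{s+ks_0+1}\leftarrow\cdots$ does give an Adams resolution of $Y_{s+ks_0}$, and for internal filtrations $s''\ge r-1$ the $E_r$-page of $Y_{s+ks_0}$ agrees (after the obvious shift) with that of $Y$, hence vanishes above the line. But in filtrations $0\le s''\le r-2$ there can be extra permanent cycles: classes that in the spectral sequence for $Y$ would have been killed by differentials $d_i$ with $i<r$ originating from filtrations below $s+ks_0$, which no longer exist in the truncated tower. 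So $\pi_{n+kd}(Y_{s+ks_0})$ need not be zero.

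The correct fix is exactly what your Milnor-sequence setup already accommodates: prove pro-triviality of the tower $\{\pi_n(\widehat{Y_s})\}_s$ rather than termwise vanishing. Concretely, for $s>c+n\,s_0/d$ show that the composite $\pi_n(\widehat{Y_{s+r-1}})\to\pi_n(\widehat{Y_s})$ is zero. On each telescope term this is the map $\pi_{n+kd}(Y_{s+ks_0+r-1})\to\pi_{n+kd}(Y_{s+ks_0})$, whose image lands in Adams filtration $\ge r-1$ for $Y_{s+ks_0}$; there the shifted vanishing line \emph{does} apply, forcing the image to have infinite filtration and hence to vanish (using convergence of the Adams spectral sequence for the bounded-below finite-type spectrum $Y_{s+ks_0}$). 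This gives $\lim_s=\lim_s^1=0$ and the rest of your argument goes through unchanged.
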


\begin{proof}
Note that although Mahowald--Ravenel--Shick work at odd primes, the proof of the result~\cite[Thm.~2.13]{MRS01} does not depend on the prime and therefore carries over \emph{mutatis mutandis}. We spell this out as follows. First, 
\[ \colim_{s} \widehat{Y}_{s}\simeq \colim_{s} \colim_{i} \Sigma^{-di} Y_{s+is_0}\simeq   \colim_{i} \colim_{s}\Sigma^{-di} Y_{s+is_0}\simeq \colim_i \Sigma^{-di}Y \simeq  \widehat{Y} \,.\]
Letting $\mathrm{E}_{r}^{s,t}(Y)$ be the $\mathrm{E}_{r}$-page of the Adams spectral sequence for $Y$ and $\mathrm{E}_{r}^{s,t}(\widehat{Y})$ be the localized Adam spectral sequence, we note that 
\[ \mathrm{E}_{r}^{s,t}(\widehat{Y})=\lim_{k}\mathrm{E}_{r}^{s+ks_{0},t+kd}(Y) \]
so the vanishing line for the localized Adams spectral sequence follows from the vanishing line for the unlocalized Adams spectral sequence. 

Let $g: Y_{s}\to Y_{s-r+1}$ be the structure map in the Adams resolution \eqref{eq:Adams-resolution-Y} for $Y$. The vanishing line implies $\pi_{m}(g)=0$ for $m<(sd+c)/s_{0}$.  Compatibility of $g$ with the map $Y_{s}\to \Sigma^{-dk}Y_{s+s_{0}k}$ implies that $g :\widehat{Y}_{s}\to \widehat{Y}_{s+r-1}$ has the same property. Therefore, fixing $m$ and $s$, the map 
\begin{equation}\label{eq:key-map}
\pi_{m}\widehat{Y}_{i}\to \pi_{m}\widehat{Y}_{s}
\end{equation} 
is trivial for large enough $i$ and the image of $\lim_{i}\pi_{*}\widehat{Y}_{i}\to \pi_{*}\widehat{Y}_{s}$ is trivial for each $s$, so $\lim_{i}\pi_{*}\widehat{Y}_{i}=0$. The description of the map \eqref{eq:key-map} above implies that for each $m$ the sequence $\{\pi_{m}\widehat{Y}_{i}\}$ satisfies the Mittag--Leffler condition and consequently $R^{1}\lim_{i}\pi_{*}\widehat{Y}_{i}$ is zero. We can then apply~\cite[Lemma~8.1]{Boa99}, to determine that the spectral sequence strongly converges by~\cite[Theorem~8.2]{Boa99}. 
\end{proof}

\begin{rem}
Mahowald--Ravenel--Shick compute $v_n^{-1}\mathrm{E}_2$ for the localized Adams spectral sequence converging to $\pi_*(\widehat{y(n)})$ in~\cite[Sec.~2.3]{MRS01}. Their computations show that the localized Adams spectral sequence for $\pi_*(\widehat{y(n)})$ strongly converges. 
\end{rem}

As suggested by the notation, the $\mathrm{E}_2$-page of the localized Adams spectral sequence can be computed by inverting $v_m$ at the level of $\Ext$-groups as in \cite[Sec.~2.5]{MRS01} and~\cite{Eis88}. 

We now specialize to the case $Y = k(m) \wedge X$ and $f = v_m \wedge id_X$ so that $\widehat{Y} \simeq K(m) \wedge X$. Note that $f$ lifts to $\tilde{f} : Y \to \Sigma^{-2^{m+1}+2} Y_1$ since $v_m$ has Adams filtration one; indeed, we may take $s_0=1$ with $\tilde{f} := \tilde{v}_m \wedge \id_X$ where $\tilde{v}_m$ is a lift of $v_m$. By applying the K{\"u}nneth isomorphism and a change-of-rings isomorphism, we see that the localized Adams spectral sequence takes the form
\begin{align}
\label{localizedadams}
v^{-1}_m \mathrm{E}_2 := v^{-1}_m\Ext^{s,t}_{E(\bxi_{m+1})}(\f_2,H_*(X)) \Longrightarrow  K(m)_{t-s}(X).
\end{align}
Note that we have only used the $v_m$ self-map on $k(m)$, so there is no decoration on $X$. 

In order to compute $v^{-1}_m \mathrm{E}_2$, we will use Margolis homology~\cite[Ch. 19]{Mar11} which encodes the action of the Milnor primitive~$Q_m$ on $H_*(X)$.

\begin{defin}\label{def Q_m}
Let $M$ be a module over $E(Q_m)$. Since $Q_m^2=0$, we obtain a complex
\[
\dots \overset{Q_m}{\longrightarrow} M \overset{Q_m}{\longrightarrow} M \overset{Q_m}{\longrightarrow} M \overset{Q_m}{\longrightarrow} \dots \,.
\]
The homology
\[ 
H(M;Q_m):= \ker (Q_m) /\im (Q_m)
 \]
is the \emph{Margolis homology of $M$ with respect to $Q_m$}. 
\end{defin}

\begin{lem}\label{me2}
Suppose $H_*(X)$ is bounded below and finite type. There is an isomorphism
\begin{align*}
 \Ext_{E(\bar{\xi}_{m+1})}^{*,*}(\f_2,H_*(X)) \cong H(H_*(X);Q_m)\otimes P(v_m) \oplus T
\end{align*}
where $|v_m| = (1,2^{m+1}-1)$ and $T$ is a simple $v_m$-torsion module concentrated in bidegrees~$(0,t)$ with $t \geq \min\{i : H_i(X) \neq 0\}$. 
\end{lem}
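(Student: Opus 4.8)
The plan is to compute $\Ext_{E(Q_m)_*}^{*,*}(\f_2, H_*(X))$ directly from an injective resolution of $H_*(X)$ as a comodule over $E(Q_m)_*$, equivalently a projective resolution over $E(Q_m)$ after dualizing, and to read off the $\f_2[v_m]$-module structure from the Margolis homology $H(X;Q_m)$. Since $E(Q_m)$ is an exterior algebra on a single generator in (cohomological, internal) degree $(1,2^{m+1}-1)$, its cohomology is $\Ext_{E(Q_m)}(\f_2,\f_2)\cong \f_2[v_m]$ with $v_m$ in bidegree $(1,2^{m+1}-1)$, so the only subtlety is tracking how the module $H_*(X)$ decomposes.

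First I would invoke the classification of modules over the exterior algebra $E(Q_m)$: since $E(Q_m)\cong \f_2[Q_m]/(Q_m^2)$ is a PID-like (in fact, a finite-dimensional Hopf algebra with a single generator, hence of tame representation type — every bounded-below, finite-type module is a direct sum of free modules $E(Q_m)$ and trivial modules $\f_2$). This is exactly the classical fact used in Margolis's book: over $E(Q_m)$ every such module splits as $(\text{free part})\oplus(\text{trivial part})$, and the trivial summands are precisely a choice of basis for the Margolis homology $H(H_*(X);Q_m)=H(X;Q_m)$. I would state this splitting $H_*(X)\cong F\oplus H(X;Q_m)$ with $F$ free over $E(Q_m)$, being careful that this holds in the category of modules (not comodules) but that $\Ext$ is insensitive to this since we can compute $\Ext_{E(Q_m)_*}(\f_2,-)$ via the cobar complex, which only sees the underlying module structure of the coaction.

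Next, $\Ext$ is additive, so $\Ext_{E(Q_m)_*}^{*,*}(\f_2,H_*(X))\cong \Ext_{E(Q_m)_*}^{*,*}(\f_2,F)\oplus \Ext_{E(Q_m)_*}^{*,*}(\f_2,H(X;Q_m))$. On the free part $F$, which is a (possibly infinite, but degreewise finite) sum of shifted copies of $E(Q_m)$, the dual $E(Q_m)_*$ is the corresponding cofree comodule and $\Ext^{s,*}_{E(Q_m)_*}(\f_2,E(Q_m)_*\{j\})$ vanishes for $s>0$ and is a single $\f_2$ in $s=0$; these classes all sit in homological degree $0$ at internal degrees $\geq \min\{i:H_i(X)\neq 0\}$ and are $v_m$-torsion, contributing the summand $T$. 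On the trivial part, $\Ext^{*,*}_{E(Q_m)_*}(\f_2,\f_2\{j\})\cong \f_2[v_m]\{j\}$, which assembled over a basis of $H(X;Q_m)$ gives exactly $H(X;Q_m)\otimes \f_2[v_m]$. Combining these identifications yields the claimed isomorphism.

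The main obstacle I expect is purely bookkeeping rather than conceptual: making the module-splitting argument rigorous when $H_*(X)$ is infinite-dimensional (only bounded below and of finite type), so that the ``free part'' is an infinite direct sum and one must check that $\Ext$ commutes with it in the relevant range — this is fine because boundedness below and finite type ensure that in each bidegree only finitely many summands contribute, so the cobar complex computation is degreewise finite and additivity holds. A secondary point of care is the passage between the $E(Q_m)$-module structure and the $E(Q_m)_*$-comodule structure: one should note (as the paper does via Lemma~\ref{dual lemma} and \cite[Ch.~19]{Mar11}) that Margolis homology and the splitting are well-defined on either side and match up under duality in finite type, so the statement in terms of $H(X;Q_m)$ (defined via $H_*(X)$) is consistent with computing $\Ext$ over $E(Q_m)_*$. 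Since the paper explicitly says this lemma is well known and omits the proof, I would keep the writeup to this outline.
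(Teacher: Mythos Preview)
Your proposal is correct and is the standard argument; the paper itself omits the proof entirely, stating only that the lemma ``is well known.'' Since there is no proof in the paper to compare against, there is nothing further to say beyond noting that your decomposition-into-free-plus-trivial approach over $E(Q_m)$ is exactly the expected route (cf.\ \cite[Ch.~19]{Mar11}).
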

\begin{proof}
Since $E(\bar{\xi}_{m+1})$ is a finite dimensional connected Hopf algebra, the category of $E(\bar{\xi}_{m+1})$-comodules has enough projectives~\cite[Thm.~10]{Lin77}. To compute 
\[
\Ext_{E(\bar{\xi}_{m+1})}^{*,*}(\f_2,H_*(X)) 
\]
we therefore consider the minimal projective resolution of $\f_2$ in the category of $E(\bar{\xi}_{m+1})$-comodules 
\[ 
\f_2 \leftarrow E(\bar{\xi}_{m+1}) \leftarrow E(\bar{\xi}_{m+1}) \leftarrow 
\]
where the first map is the canonical quotient and the remaining maps are all given by multiplication by $\bar{\xi}_{m+1}$. Truncating and applying $\Hom_{E(\bar{\xi}_{m+1})}(-,H_*(X))$ produces the sequence 
\begin{align}\label{margolis resolution} 
0 \longrightarrow H_*(X)\longrightarrow H_*(X)\longrightarrow \dots 
\end{align}
where all the maps are induced by the $E(\overline{\xi}_{m+1})$-coaction on $H_*(X)$. Note that the dual $E(\overline{\xi}_{m+1})^{\vee}:=\mathrm{Hom}_{\mathbb{F}_{p}}(E(\overline{\xi}_{m+1}),\mathbb{F}_{p})$ can be identified with $E(Q_m)$ and we can regard a left $E(Q_m)$-comodule $M$ with coaction $\nu(m)=\sum_{i=0}^k m_{i,0}\otimes m_{i,1}$ as a (right) $E(Q_{m})$-module with (right) action $\phi(m\otimes f)=\sum_{i=0}^k f(m_{i,0})\cdot m_{i,1}$, see~\cite[Ch.~1~4.1(1)]{BW03}. In fact, the category of $E(\overline{\xi}_{m+1})$-comodules is equivalent to the category of $E(Q_m)$-modules by~\cite[Ch.~1~4.7(e)]{BW03} , so we can view this sequence as a sequence of $E(Q_m)$-modules. Therefore, we can identify 
\[
\Ext_{E(\bar{\xi}_{m+1})}^{s,*}(\f_2,H_*(X))\cong H(H_*(X);Q_m)
\]
when $s>0$ and~$\Ext_{E(\bar{\xi}_{m+1})}^{0,*}(\f_2,H_*(X))\cong \ker (Q_n\colon \thinspace H_*(X)\longrightarrow H_*(X))$. Recalling that 
\[
\Ext_{E(\overline{\xi}_{m+1})}^{*,*}(\f_2,\f_2)\cong P(v_m) 
\] 
with $|v_m| = (2^{n+1}-1,1)$, we see that the $P(v_m)$-module structure on $\Ext_{E(\bar{\xi}_{m+1})}^{s,*}(\f_2,H_*(X))$ is induced by the Yoneda pairing in $\Ext$, i.e., by pairing the chain complex \eqref{margolis resolution} for $H_*(X)$ as in the lemma statement with the same chain complex where $X$ is the sphere spectrum. This pairing evidently induces the isomorphism of $P(v_m)$-modules described in the statement of the lemma where $T$ consists of elements in the kernel of $Q_n\colon \thinspace H_*(X)\longrightarrow H_*(X)$. 
\end{proof}

\begin{cor}\label{mvassv}
Let $m \geq 1$ and suppose $X$ is a bounded below spectrum and $H_*(X)$ is finite type. The following statements hold: 
\begin{enumerate}
\item \label{item 1 mvassv} If $H(H_*(X);Q_m) = 0$, then $v^{-1}_m\mathrm{E}_2 = 0$. 
\item \label{item 2 mvassv} The $\mathrm{E}_2$ page of \eqref{localizedadams}, denoted $v_m^{-1}\mathrm{E}_2$, has a vanishing line of slope~$1/|v_m|$. 
\item \label{item 3 mvassv} The localized Adams spectral sequence associated to $k(m) \wedge X$ with the self-map~$v_m \wedge id_X$ strongly converges to $K(m)_*(X)$.
\end{enumerate}
\end{cor}

\begin{proof}
Parts \eqref{item 1 mvassv} and~\eqref{item 2 mvassv} are clear from Lemma~\ref{me2}. Statement~\eqref{item 3 mvassv} follows by applying Theorem~\ref{MRS thm}. 
\end{proof}
\begin{rem}
When all the hypotheses for Corollary~\ref{mvassv} hold including $H(H_*(X);Q_m) = 0$ then it is a consequence that $K(m)_*X\cong 0$. 
\end{rem}

\subsection{Chromatic complexity of $y(n)$, $z(n)$, and $z(n)/v_n$}\label{Sec:ynznCC}
We now apply the localized Adams spectral sequence to determine the chromatic complexity of $y(n)$, $z(n)$, and a spectrum we define in this section~$z(n)/v_n$. 

The action of $Q_m$ on the generator~$\bxi_k \in \ca^{\vee}$ can be computed using the coproduct~$\psi\co  \ca^{\vee} \to \ca^{\vee} \otimes \ca^{\vee}$ defined in~\eqref{coprod dual steenrod}. In particular, we have 
\begin{equation}\label{Qaction} 
Q_m(\bxi_k) = \begin{cases} 
				\bxi_{k-m-1}^{2^{m+1}} \quad & k \geq m+1,\\
				0 \quad &else,
			\end{cases}
\end{equation}
where $\bxi_0=1$. This action can be extended to all of $\ca^{\vee}$ using the fact that $Q_m$ acts as a derivation. 

As a warm-up for later computations, we compute the Margolis homology of the dual Steenrod algebra. The chain complexes defined in the proof will be used in our computation of $H(H_*(y(n));Q_m)$ below. 

\begin{lem}\label{mhfp}
The Margolis homology of the dual Steenrod algebra~$H(\ca^{\vee};Q_m)$, or equivalently the Margolis homology of $H\f_2$, vanishes for all $m \geq 0$. 
\end{lem}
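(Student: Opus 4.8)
The plan is to compute $H(\ca_*; Q_m)$ directly by exhibiting $\ca_*$ as a tensor product of $Q_m$-stable sub-Hopf-algebras, each of which is either acyclic or contributes a trivial factor. Recall from \eqref{Qaction} that $Q_m(\bxi_k) = \bxi_{k-m-1}^{2^{m+1}}$ for $k \geq m+1$ and $Q_m(\bxi_k) = 0$ otherwise, and that $Q_m$ acts as a derivation. So $Q_m$ pairs up the generator $\bxi_k$ (for $k \geq m+1$) with the $2^{m+1}$-st power $\bxi_{k-m-1}^{2^{m+1}}$, while the low generators $\bxi_1, \ldots, \bxi_m$ are $Q_m$-cycles.

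First I would organize the polynomial generators of $\ca_* = P(\bxi_i \mid i \geq 1)$ into residue classes modulo $m+1$: for each $1 \leq r \leq m+1$, consider the chain of generators $\bxi_r, \bxi_{r+(m+1)}, \bxi_{r+2(m+1)}, \ldots$, and let $B_r$ be the polynomial subalgebra they generate. Then $\ca_*$ is the tensor product $\bigotimes_{r=1}^{m+1} B_r$ as an algebra, and since $Q_m$ is a derivation sending each $B_r$ into itself, this is a tensor decomposition of $Q_m$-complexes; by the Künneth theorem for Margolis homology it suffices to compute $H(B_r; Q_m)$ for each $r$. Within a fixed $B_r = P(x_0, x_1, x_2, \ldots)$ with $x_j := \bxi_{r + j(m+1)}$, the differential is $Q_m(x_j) = x_{j-1}^{2^{m+1}}$ for $j \geq 1$ and $Q_m(x_0) = 0$. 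I would then further split off $x_0$: write $B_r = P(x_0) \otimes P(x_1, x_2, \ldots)$; this is not a tensor decomposition of complexes because $Q_m$ hits $x_0^{2^{m+1}}$, so instead I would argue more carefully by a filtration or a direct verification that the complex $\big(P(x_0, x_1, \ldots), Q_m\big)$ has trivial homology — essentially because $Q_m(x_j) = x_{j-1}^{2^{m+1}}$ makes this look like a Koszul-type complex, and the standard computation (as in the Margolis homology of $H\mathbb{Z}$ or $BP$) shows acyclicity. The cleanest route is probably to note that over $\mathbb{F}_2$, $P(x_{j-1}, x_j)$ with $Q_m x_j = x_{j-1}^{2^{m+1}}$, $Q_m x_{j-1} = 0$ has Margolis homology $\mathbb{F}_2[x_{j-1}]/(x_{j-1}^{2^{m+1}})$ in the kernel-mod-image, and then iterating/telescoping these pairings across the whole chain $x_0, x_1, x_2, \ldots$ collapses everything to $\mathbb{F}_2$.

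Concretely, the key structural fact is that $\ca_*$ as a $Q_m$-complex is isomorphic to a tensor product of ``exterior-like'' pieces: grouping $\bxi_k$ with $\bxi_{k+m+1}$, one finds $\ca_* \cong \bigotimes_{k} \big(P(\bxi_k, \bxi_{k+m+1}), Q_m\big)$-type factors (being careful about the power $\bxi_k^{2^{m+1}}$ versus $\bxi_k$), each of which is $Q_m$-acyclic except possibly for a bounded contribution, and assembling these via Künneth gives $H(\ca_*; Q_m) = \mathbb{F}_2$ concentrated in degree $0$ — but in fact since $\ca_*$ is augmented and connected and all positive-degree classes cancel, one gets $H(\ca_*; Q_m) \cong \mathbb{F}_2$, which for the purposes of the localized Adams spectral sequence (where we care about vanishing in positive stems, or rather the statement that it ``vanishes'') is what is claimed — I would double check the precise normalization the authors intend by ``vanishes,'' presumably meaning the reduced Margolis homology $\widetilde{H}(\ca_*; Q_m) = 0$.

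I expect the main obstacle to be the bookkeeping in the tensor decomposition: one must correctly match each generator $\bxi_k$ with the unique generator $\bxi_{k-m-1}$ whose $2^{m+1}$-st power it maps to, verify that these pairs are disjoint and exhaust all generators (modulo the finitely many low ones $\bxi_1, \ldots, \bxi_m$ that are permanent cycles but get killed as powers appearing in the image), and then run the Künneth argument while tracking that the ``leftover'' low generators do not survive because, e.g., $\bxi_1$ appears as $\bxi_1^{2^{m+1}} = Q_m(\bxi_{m+2})$ in the image. The acyclicity of each two-variable factor $\big(\mathbb{F}_2[a,b], \, Q_m b = a^{2^{m+1}}, \, Q_m a = 0\big)$ is a short explicit computation — one writes a general monomial $a^i b^j$, applies $Q_m$, and checks kernel equals image — but verifying that the infinite tensor product reassembles correctly and that no positive-degree class survives globally is where care is needed. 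A reasonable alternative, if the direct decomposition gets messy, is to invoke the known computation of $H(H\mathbb{F}_2; Q_m)$ from Margolis's book directly, but since the authors say the chain complexes ``will be used in our computation of $H(y(n); Q_m)$ below,'' the explicit approach is the one to carry out.
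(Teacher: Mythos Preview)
Your overall plan (tensor-decompose and apply K\"unneth) is the same as the paper's, but there is a genuine gap that causes you to reach the wrong conclusion. You assert that in each $B_r$ one has $Q_m(x_0)=0$. That is false for $r=m+1$: there $x_0=\bxi_{m+1}$ and by \eqref{Qaction} one has $Q_m(\bxi_{m+1})=\bxi_0^{2^{m+1}}=1$. Thus $1$ lies in the image of $Q_m$, and the Margolis homology genuinely \emph{vanishes}; it is $0$, not $\f_2$, and no appeal to a ``reduced'' version is needed or intended. Your proposal never uses this fact, which is exactly the mechanism that makes the lemma true.

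The paper's decomposition is organized precisely to exploit this. Rather than splitting by residue class of the index, it uses the $\f_2$-algebra identity $P(x)\cong E(x)\otimes P(x^2)$ to peel off the exterior piece $(e_0)=E(\bxi_{m+1})$ with $Q_m(\bxi_{m+1})=1$, and then groups the remaining exterior and even-power pieces into $Q_m$-stable blocks $(c_r)$ (and, for $m\ge 1$, some trivial-differential pieces $(d_s)$). Since $(e_0)$ is acyclic, K\"unneth immediately gives $H(\ca_*;Q_m)=0$ without ever having to compute the homology of the other factors. By contrast, your residue-class blocks $B_r$ for $r\le m$ really do have $Q_m(x_0)=0$, and their Margolis homology is \emph{not} zero: a direct check of your own two-variable model $(\f_2[a,b],\,Q_m b=a^{2^{m+1}},\,Q_m a=0)$ gives $\f_2[a]/(a^{2^{m+1}})$, and telescoping such factors along the chain leaves a nontrivial truncated-polynomial contribution from each $B_r$. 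So those $B_r$ are not acyclic; the entire burden of vanishing rests on $B_{m+1}$, which is acyclic exactly because $Q_m(\bxi_{m+1})=1$. Once you put that observation in, your approach can be made to work, but at that point it is simpler to isolate $E(\bxi_{m+1})$ directly as the paper does.
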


\begin{proof}
This is~\cite[Ch.~19, Prop.~1]{Mar11}, but our proof is modeled after~\cite[Lem.~16.9]{Ada95}. We begin with $H(\ca^{\vee};Q_0)$, which is somewhat exceptional. Express $\ca^{\vee}$ as the tensor product of the chain complexes (with differential~$Q_0$)
\begin{align*}
(e_0) \quad &\f_2\{1\} \leftarrow \f_2\{\bxi_1\}, \\
(c_r) \quad &\f_2\{1, \bxi_r^2\} \leftarrow \f_2\{\bxi_{r+1},\bxi^4_r\} \leftarrow \f_2\{\bxi_r^2\bxi_{r+1}, \bxi^6_r\} \leftarrow \f_2\{\bxi^4_r\bxi_{r+1},\bxi^8_r\} \leftarrow \cdots,
\end{align*}
where $r \geq 1$.~Each chain complex~$(c_r)$ has homology~$\f_2\{1\}$ and the chain complex~$(e_0)$ has vanishing homology, so by the K{\"u}nneth isomorphism for Margolis homology~\cite[Ch.~19, Prop.~18]{Mar11}, we have $H(\ca^{\vee};Q_0) \cong 0$. 

Now we compute $H(\ca^{\vee};Q_1)$. Decompose $\ca^{\vee}$ as the tensor product of the chain complexes (with differential~$Q_1$)
\begin{align*}
(e_0) &\quad \f_2\{1\} \leftarrow \f_2\{\bxi_2\} \,, \\
(c_r) &\quad \f_2\{1, \bxi_r^4\} \leftarrow \f_2\{\bxi_{r+2},\bxi^8_r\} \leftarrow \f_2\{\bxi^4_r\bxi_{r+2}, \bxi^{12}_r\} \leftarrow \f_2\{\bxi^8_r \bxi_{r+2},\bxi^{16}_r\} \leftarrow \cdots \,, \\
(d_1) &\quad \f_2\{1,\bxi_1,\bxi^2_1, \bxi_1^3\} \,,\\
(d_s) &\quad \f_2\{1,\bxi_s^2\} \,,
\end{align*}
where $r \geq 1$ and $s \geq 2$. The chain complex~$(e_0)$ has vanishing homology, so by the K{\"u}nneth isomorphism we have $H(\ca^{\vee};Q_1) \cong 0$.

The computation of $H(\ca^{\vee};Q_m)$ for $m \geq 2$ is similar. Decompose $\ca^{\vee}$ into chain complexes as above; the chain complex
\[
(e_0) \quad \f_2\{1\} \leftarrow \f_2\{\bxi_{m+1}\}
\]
has vanishing homology, so $H(\ca^{\vee};Q_m) = 0$. 
\end{proof}

\begin{cor}\label{mhz}
The Margolis homology of $(\ce //E(0))^\vee \cong \f_2[\bxi^2_1,\bxi_2,\ldots]$, or equivalently the Margolis homology of $H\z$, is given by
\[
H(H_*(H\z);Q_m) \cong \begin{cases} \f_2 \quad &m=0 ,\\
0 & else \,.
\end{cases}
\]
\end{cor}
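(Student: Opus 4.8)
The plan is to compute $H(H\z;Q_m):=H(H_*(H\z);Q_m)$ directly from the presentation $H_*(H\z)\cong P(\bxi_1^2,\bxi_2,\bxi_3,\ldots)\subseteq \ca_*$, reusing the chain complexes built in the proof of Lemma \ref{mhfp}. The first step is to check that $H_*(H\z)$ is closed under $Q_m$ for every $m\ge 0$: by \eqref{Qaction} and the fact that $Q_m$ is a derivation over $\f_2$, the only generators to test are $\bxi_1^2$, which every $Q_m$ annihilates, and $\bxi_k$ with $k\ge 2$, for which $Q_m(\bxi_k)=\bxi_{k-m-1}^{2^{m+1}}$ (or $0$), a monomial that visibly lies in $P(\bxi_1^2,\bxi_2,\ldots)$ since its base is $1$, some $\bxi_j$ with $j\ge 2$, or $\bxi_1$ raised to the even power $2^{m+1}$.

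For $m\ge 1$ I would then split off an exterior factor. Since $Q_m(\bxi_1)=0$ in this range, the multiplication map is an isomorphism of $Q_m$-chain complexes
\[ \ca_*\;\cong\; H_*(H\z)\otimes E(\bxi_1),\]
with $Q_m$ acting trivially on the second factor: it is an isomorphism of graded vector spaces because $\f_2[\bxi_1]\cong\f_2[\bxi_1^2]\otimes\f_2\{1,\bxi_1\}$, and it is a chain map because $Q_m$ is a derivation killing $\bxi_1$. The Künneth isomorphism for Margolis homology \cite[Ch. 19, Prop. 18]{Mar11} combined with Lemma \ref{mhfp} then yields
\[ 0=H(\ca_*;Q_m)\cong H(H\z;Q_m)\otimes E(\bxi_1),\]
and since $E(\bxi_1)\ne 0$ this forces $H(H\z;Q_m)=0$ for all $m\ge 1$.

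The case $m=0$ requires a separate argument, because $Q_0(\bxi_1)=1$ and the splitting above becomes vacuous (the complementary factor is itself acyclic). Here I would instead recall from the proof of Lemma \ref{mhfp} the decomposition of $(\ca_*,Q_0)$ as a tensor product $(e_0)\otimes\bigotimes_{r\ge 1}(c_r)$, in which $(e_0)=\f_2\{1,\bxi_1\}$ is the unique tensor factor involving an odd power of $\bxi_1$ and each $(c_r)\cong\f_2[\bxi_r^2]\otimes\f_2\{1,\bxi_{r+1}\}$ has Margolis homology $\f_2\{1\}$. Deleting the factor $(e_0)$ removes exactly the odd powers of $\bxi_1$, so $\bigotimes_{r\ge 1}(c_r)$, as a $Q_0$-subcomplex of $\ca_*$, coincides with $P(\bxi_1^2,\bxi_2,\bxi_3,\ldots)=H_*(H\z)$; Künneth then gives $H(H\z;Q_0)\cong\bigotimes_{r\ge 1}\f_2\{1\}\cong\f_2$.

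I expect the only real difficulty to be bookkeeping: verifying that the displayed factorizations are isomorphisms of chain complexes rather than merely of graded vector spaces, and confirming that the (countably infinite) tensor products cause no trouble. Both points are routine in the present situation because $H_*(H\z)$ and $\ca_*$ are connective and of finite type, so Margolis's Künneth theorem applies without modification.
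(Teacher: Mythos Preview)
Your proof is correct. The $m=0$ case is identical to the paper's: both delete the acyclic factor $(e_0)=\f_2\{1,\bxi_1\}$ from the tensor decomposition of $(\ca_*,Q_0)$ and apply K\"unneth to the remaining factors $(c_r)$.

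For $m\ge 1$ your route differs from the paper's. The paper proceeds by directly modifying the explicit tensor decomposition from Lemma~\ref{mhfp}: in that decomposition the only factor touching odd powers of $\bxi_1$ is $(d_1)$, and replacing it by $\f_2\{1,\bxi_1^2\}$ yields a decomposition of $H_*(H\z)$ still containing the acyclic factor $(e_0)=\f_2\{1\}\leftarrow\f_2\{\bxi_{m+1}\}$, whence vanishing. You instead observe the clean splitting $\ca_*\cong H_*(H\z)\otimes E(\bxi_1)$ of $Q_m$-complexes (valid precisely because $Q_m(\bxi_1)=0$ for $m\ge 1$) and deduce vanishing from $H(\ca_*;Q_m)=0$ via K\"unneth. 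Your argument is shorter and more conceptual for this particular corollary; the paper's hands-on modification of the explicit complexes has the advantage of establishing the template reused verbatim in the subsequent computations (Lemma~\ref{mhyn}, Corollaries~\ref{mhzn} and~\ref{znmvn}), where no such convenient exterior factor is available.
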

\begin{proof}
We begin with $m=0$. The only difference between this computation and the computation for $H_*(H\f_2,Q_0)$, is that we omit the chain complex $(e_0)$. Since the homology of the remaining complexes~$(c_r)$ is $\f_2$ in each case, $H_*(H\z,Q_0)\cong \f_2$. 

For $m > 0$, we can use the same complexes as in the previous proof after replacing $(d_1)$ by the chain complex $\f_2\{1,\bxi^2_1\}$.
\end{proof}

We  compute the Margolis homology of $y(n)$ and~$z(n)$ by modifying these complexes further.

\begin{lem}\label{mhyn}
The Margolis homology of $P(\bxi_1,\ldots,\bxi_n)$, or equivalently the Margolis homology of $y(n)$, is given by
\[
H(H_*(y(n));Q_m) \cong \begin{cases}
0 \quad &\text{ if } 0 \leq m \leq n-1 \,, \\
H_*(y(n)) \quad &\text{ if } m \geq n \,.
\end{cases}
\]
\end{lem}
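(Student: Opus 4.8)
The plan is to split into the two cases $m\ge n$ and $0\le m\le n-1$, handling the first by inspection and the second by exhibiting a contraction of the Margolis complex.

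When $m\ge n$, every polynomial generator $\bxi_k$ of $H_*(y(n))=P(\bxi_1,\dots,\bxi_n)$ satisfies $k\le n\le m<m+1$, so formula \eqref{Qaction} gives $Q_m(\bxi_k)=0$. Since $Q_m$ is a derivation on $\ca_*$ (restricted here to the sub-comodule-algebra $H_*(y(n))$, which is closed under it) it then annihilates all of $H_*(y(n))$, the Margolis complex has zero differential, and $H(y(n);Q_m)\cong H_*(y(n))$.

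When $0\le m\le n-1$, the generator $\bxi_{m+1}$ is present among $\bxi_1,\dots,\bxi_n$, and the crucial point is the identity $Q_m(\bxi_{m+1})=\bxi_0^{2^{m+1}}=1$ coming from \eqref{Qaction} (this is exactly the feature that distinguishes $m\le n-1$ from $m\ge n$). I would then observe that multiplication by $\bxi_{m+1}$ is a contracting homotopy: setting $s(x)=\bxi_{m+1}\cdot x$ and using that $Q_m$ is a derivation over $\f_2$, one computes $Q_m(s(x))+s(Q_m(x))=Q_m(\bxi_{m+1})\,x=x$, so every cycle is a boundary and $H(y(n);Q_m)=0$. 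Equivalently, and in the style of the proof of Lemma~\ref{mhfp}, one can write $H_*(y(n))\cong\Lambda(\bxi_{m+1})\otimes R$ with $R=P(\bxi_{m+1}^{2},\bxi_1,\dots,\widehat{\bxi_{m+1}},\dots,\bxi_n)$; by \eqref{Qaction} this is an isomorphism of $Q_m$-chain complexes, the factor $\Lambda(\bxi_{m+1})$ is acyclic (it is the analogue of the complex $(e_0)$ in Lemma~\ref{mhfp}), and the K{\"u}nneth isomorphism for Margolis homology \cite[Ch.~19, Prop.~18]{Mar11} finishes the argument.

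I do not expect a genuine obstacle. The two things requiring a little care are: (i) the bookkeeping that $\bxi_{m+1}$ is a polynomial generator of $H_*(y(n))$ precisely when $m\le n-1$ — this is what produces the dichotomy in the statement; and (ii), in the tensor-product formulation, checking that $Q_m$ carries the algebra generators of $R$ back into $R$, the only nonobvious case being $Q_m(\bxi_{2m+2})=\bxi_{m+1}^{2^{m+1}}$, which lies in $P(\bxi_{m+1}^2)\subseteq R$ because $2^{m+1}$ is even. Everything else is the derivation property of $Q_m$ together with the explicit action \eqref{Qaction}.
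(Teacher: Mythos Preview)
Your proof is correct. The case $m\ge n$ is identical to the paper's. For $0\le m\le n-1$, both your tensor-product formulation and the paper's proof isolate the acyclic factor $E(\bxi_{m+1})$ (the complex the paper calls $(e_0)$) and appeal to K{\"u}nneth; the underlying idea is the same. The difference is organizational: the paper carries along the explicit decomposition of the complementary factor into the small complexes $(c_r)$, $(c_n')$, $(d_s)$ inherited from Lemma~\ref{mhfp}, modifying those $(c_r)$ with $r$ close to $n$ to account for the missing generators $\bxi_{n+1},\bxi_{n+2},\ldots$. That extra bookkeeping is not needed for the present statement, but it pays off in the subsequent corollaries (Corollaries~\ref{mhzn} and~\ref{znmvn}) where one must actually compute the Margolis homology of the complementary factor after removing $(e_0)$. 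Your contracting-homotopy argument via $s(x)=\bxi_{m+1}x$ is a genuinely more elementary route that bypasses K{\"u}nneth entirely; it is clean and self-contained, at the cost of not setting up the finer decomposition used downstream.
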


\begin{proof}
When $m=0$, the first $r$ for which the complex~$(c_r)$ cannot be defined is $(c_n)$ since $\bxi_{n+1} \notin H_*(y(n))$. Therefore we replace $(c_n)$ by the complex
\[
(c_n') \quad \f_2\{1\} \leftarrow \f_2\{\bxi^2_n\} \leftarrow \f_2\{\bxi^4_n\} \leftarrow \f_2\{\bxi^6_n\} \leftarrow \cdots \,.
\]
Then $H_*(y(n))$ decomposes as the tensor product of the complexes$(e_0)$, $(c_r)_{1 \leq r \leq n-1}$, and~$(c_n')$. The homology of $(c_n')$ is nontrivial but the homology of $(e_0)$ vanishes, so $H(H_*(y(n));Q_0) \cong 0$.

When $1 \leq m \leq n-1$, we make a similar change. We end up with redefined chain complexes~$(c_r')$ for $n-m \leq r \leq n$. Since we still tensor with the acyclic complex~$(e_0)$, we still have $H(H_*(y(n));Q_m) = 0$. 

When $m \geq n$, we no longer include the chain complex~$(e_0)$ since $\bxi_{n+1} \notin H_*(y(n))$. Since $Q_n(\bxi_i) = 0$ for all $1 \leq i \leq n$, we see that $H_*(y(n))$ is generated by cycles and obtain the desired isomorphism.
\end{proof}

The same techniques adapted to the complexes used to compute $H(H_*(H\z);Q_m)$ give the following:

\begin{cor}\label{mhzn}
The Margolis homology of $P(\bxi^2_1,\bxi_2,\ldots,\bxi_n)$, or equivalently the Margolis homology of $z(n)$, is given by
\[
H(H_*(z(n));Q_m) \cong \begin{cases}
P(\bxi_n^2) \quad&\text{ if } m=0 \,, \\
0 \quad &\text{ if } 1 \leq m \leq n-1 \,, \\
H_*(z(n)) \quad &\text{ if } m \geq n \,.
\end{cases}
\]
\end{cor}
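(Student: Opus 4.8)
The plan is to imitate the proofs of Lemma~\ref{mhfp}, Corollary~\ref{mhz}, and Lemma~\ref{mhyn}: decompose $H_*(z(n))\cong P(\bxi_1^2,\bxi_2,\ldots,\bxi_n)$ as a tensor product of explicit $Q_m$-chain complexes built from the generators $\bxi_k$ and their powers, and then apply the K\"unneth isomorphism for Margolis homology \cite[Ch. 19, Prop. 18]{Mar11}. The three cases of the statement are handled separately. The case $m\ge n$ is immediate: every polynomial generator $\bxi_i$ of $H_*(z(n))$ has $i\le n\le m<m+1$, so by \eqref{Qaction} we have $Q_m(\bxi_i)=0$; since $Q_m$ is a derivation it acts as zero on all of $H_*(z(n))$, whence $H(z(n);Q_m)\cong H_*(z(n))$.

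For $1\le m\le n-1$, the point is that $2\le m+1\le n$, so $\bxi_{m+1}$ is a genuine polynomial generator of $H_*(z(n))$ (unlike $\bxi_1$, it is not forced to appear only as a square), and by \eqref{Qaction} we have $Q_m(\bxi_{m+1})=\bxi_0^{2^{m+1}}=1$. Writing $P(\bxi_{m+1})=E(\bxi_{m+1})\otimes P(\bxi_{m+1}^2)$ lets us split off the acyclic complex $(e_0)\colon \f_2\{1\}\leftarrow \f_2\{\bxi_{m+1}\}$ as a tensor factor of $H_*(z(n))$, exactly as in the computation of $H(\ca_*;Q_m)$ and $H(y(n);Q_m)$; by K\"unneth, $H(z(n);Q_m)\cong 0$, and the precise description of the complementary $Q_m$-subcomplex (a product of $(c_r)$- and $(d)$-type complexes, truncated near the top generator $\bxi_n$ as in Lemma~\ref{mhyn}) is irrelevant.

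For $m=0$, one mimics Corollary~\ref{mhz}: since $\bxi_1\notin H_*(z(n))$, we omit $(e_0)$ and decompose $H_*(z(n))$ as the tensor product of the complexes $(c_r)$ for $1\le r\le n-1$ (each the Koszul-type complex with underlying algebra $P(\bxi_r^2)\otimes E(\bxi_{r+1})$ and differential $Q_0(\bxi_{r+1})=\bxi_r^2$, hence homology $\f_2$) together with the truncated complex $(c_n')\colon \f_2\{1\}\leftarrow \f_2\{\bxi_n^2\}\leftarrow \f_2\{\bxi_n^4\}\leftarrow\cdots$, which carries zero differential because $\bxi_{n+1}\notin H_*(z(n))$ and hence has homology $P(\bxi_n^2)$. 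Matching underlying algebras shows this tensor product is $P(\bxi_1^2)\otimes P(\bxi_2)\otimes\cdots\otimes P(\bxi_n)=H_*(z(n))$, so by K\"unneth $H(z(n);Q_0)\cong P(\bxi_n^2)$.

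As in the model lemmas, the only genuine work is bookkeeping: checking that the listed complexes really are $Q_m$-subcomplexes with the claimed underlying vector spaces and that their tensor product reassembles $H_*(z(n))$. The sole subtlety is the behavior near the top generator $\bxi_n$, where the infinite complexes $(c_r)$ must be replaced by their truncated analogues $(c_r')$ once the relevant index exceeds $n$; this is handled verbatim as in the proof of Lemma~\ref{mhyn}, and does not affect the homology computation since in the cases $1\le m\le n-1$ the acyclic factor $(e_0)$ is still present and in the case $m=0$ the truncation is precisely what produces the surviving factor $P(\bxi_n^2)$.
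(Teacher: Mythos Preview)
Your proof is correct and follows essentially the same approach as the paper's: for $m=0$ you remove the acyclic complex $(e_0)$ from the decomposition in Lemma~\ref{mhyn} (since $\bxi_1\notin H_*(z(n))$) and obtain $P(\bxi_n^2)$ from the remaining $(c_r)$'s and $(c_n')$, while for $1\le m\le n-1$ you retain the acyclic factor $(e_0)=\f_2\{1\}\leftarrow\f_2\{\bxi_{m+1}\}$ to force vanishing, and for $m\ge n$ you note $Q_m$ acts trivially. The paper's proof is more terse---it simply says to make the same alterations to Lemma~\ref{mhyn} that Corollary~\ref{mhz} made to Lemma~\ref{mhfp}---but your explicit unpacking is exactly what that instruction amounts to.
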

\begin{proof}
We do the same alterations to Lemma \ref{mhyn} as we did to produce the proof of Corollary \ref{mhz} from Lemma \ref{mhfp}, so we will just describe the case $m=0$. We use the same chain complexes as in Lemma \ref{mhyn} except we omit the acyclic complex $(e_0)$. Thus, the Margolis homology is a tensor product of copies of $\f_2$ with the homology of $(c_n^{\prime})$. Thus, $H(H_*(z(n));Q_0)\cong P(\bxi_n^2)$.
\end{proof}

The following lemma will be useful for further describing the chromatic complexity of $z(n)$.
\begin{lem}\label{znsm}
The spectrum~$z(n)$ has a self map 
\[ 
v_n\colon \Sigma^{2p^n-2}z(n) \to z(n)
\]
that induces the zero map on $K(m)_*$ for $1\le m<n$ and the zero map on mod~$2$ homology $H_*$. Moreover,  we have an isomorphism of $\ca^{\vee}$-comodules
\[
H_*(z(n)/v_n) \cong H_*(z(n)) \otimes E(\bxi_{n+1}) \,.
\]
\end{lem}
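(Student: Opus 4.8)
The plan is to build the self map $v_n$ from the $v_n$-self map on $y(n)$ by exploiting the homotopy commutative square of Lemma~\ref{cor z(n) E1}, and then to read off the homological consequences from the Margolis homology computations of Corollary~\ref{mhzn} and Lemma~\ref{mhyn}. First I would recall that $z(n)$ is type $n$ (which follows from Corollary~\ref{mhzn} together with Corollary~\ref{mvassv}: since $H(z(n);Q_m)=0$ for $1\le m\le n-1$ we get $K(m)_*(z(n))\cong 0$ in that range, while $H(z(n);Q_n)=H_*(z(n))\ne 0$ gives $K(n)_*(z(n))\ne 0$; the $K(0)$-case is handled by the $m=0$ clause, $H(z(n);Q_0)=P(\bxi_n^2)$, which is nonzero but that is harmless). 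So $z(n)$ is a (non-finite) type $n$ spectrum. To produce a $v_n$-self map one cannot invoke the periodicity theorem directly, so instead I would use that $z(n)\to H\z\to H\f_2$ factors through $y(n)$, and that $y(n)$ carries a $v_n$-self map $v_n^d\colon\Sigma^{(2p^n-2)d}y(n)\to y(n)$ (from Mahowald--Ravenel--Shick, or constructed via the localized Adams spectral sequence as in Section~\ref{Sec:LocASS}). The key point is that on homology this self map is detected by multiplication by $v_n$ in the localized Adams $E_2$-page, and by Lemma~\ref{mhyn} the class $v_n$ acts as a polynomial generator adjoined to $H(y(n);Q_n)=H_*(y(n))$, with $H_*(y(n))$ concentrated in even-ish degrees making $v_n$ act by multiplication by an element of positive filtration; hence $v_n^d$ induces the zero map on $H_*(-;\f_2)$.

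The construction of $v_n$ on $z(n)$ itself: I would take the self map of $z(n)$ to be the one obtained by smashing with a type $n$ finite complex. More precisely, the cleanest route is to observe that $z(n)$ admits a $k(n)$-orientation (or to work with $k(n)\wedge z(n)$ directly) and to transport the $v_n$ action through the equivalence of the relevant filtration quotients; but a simpler approach that suffices here is: since $H_*(z(n))$ sits inside $H_*(y(n))$ as a sub-$\ca_*$-comodule closed under $Q_n$ (indeed $Q_n$ annihilates all of $H_*(z(n))$ as $Q_n(\bxi_i)=0$ for $i\le n$), the same localized-Adams argument that yields the $v_n$-self map on $y(n)$ applies verbatim to $z(n)$: one has $v_n^{-1}E_2\cong H(z(n);Q_n)\otimes\f_2[v_n^{\pm1}]=H_*(z(n))\otimes\f_2[v_n^{\pm1}]$, the localized spectral sequence converges by Corollary~\ref{mvassv}(3), and this furnishes the periodic self map. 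That $v_n$ induces zero on $K(m)_*$ for $1\le m<n$ is automatic: by Corollary~\ref{mhzn} and Corollary~\ref{mvassv}(1), $K(m)_*(z(n))=0$ in that range, so any self map induces the zero map trivially. That $v_n$ induces zero on $H_*(-;\f_2)$ follows because $v_n$ has positive Adams filtration, so it raises filtration in the (unlocalized) Adams spectral sequence and hence cannot be detected in $H_*$.

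For the homology of the cofiber, I would use the defining cofiber sequence
\[
\Sigma^{2p^n-2}z(n)\overset{v_n}{\longrightarrow} z(n)\longrightarrow z(n)/v_n\overset{\partial}{\longrightarrow}\Sigma^{2p^n-1}z(n).
\]
Since $v_n$ induces the zero map on $H_*(-;\f_2)$, the associated long exact sequence in mod $2$ homology breaks into short exact sequences
\[
0\longrightarrow H_*(z(n))\longrightarrow H_*(z(n)/v_n)\longrightarrow H_{*-(2p^n-1)}(z(n))\longrightarrow 0
\]
of $\ca_*$-comodules. As $\f_2$-vector spaces this gives $H_*(z(n)/v_n)\cong H_*(z(n))\otimes\f_2\{1,\bxi_{n+1}\}=H_*(z(n))\otimes E(\bxi_{n+1})$, matching degrees since $|\bxi_{n+1}|=2^{n+1}-1=2p^n-1$ at $p=2$. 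To upgrade this to an isomorphism of $\ca_*$-comodule \emph{algebras} (or at least comodules) I would identify the connecting map's section with the inclusion of $\bxi_{n+1}$: the target $\Sigma^{2p^n-1}z(n)$ contributes a generator in degree $2^{n+1}-1$, and comparing with $H_*(H\f_2)$ via $z(n)\to y(n)\to H\f_2$ — under which $H_*(z(n))\hookrightarrow\ca_*$ and the top cell maps to $\bxi_{n+1}$, using that $Q^{|\bxi_n|+1}(\bxi_n)=\bxi_{n+1}$ as already noted in the earlier corollary's proof — forces the comodule structure to be the one on $H_*(z(n))\otimes E(\bxi_{n+1})$ with the coaction determined by~\eqref{coprod dual steenrod}.

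\textbf{Main obstacle.} The routine parts are the Margolis bookkeeping and the long exact sequence. The genuinely delicate point is establishing the \emph{existence} of the self map $v_n$ on $z(n)$ together with control of its homological behavior: since $z(n)$ is not finite, the periodicity theorem does not apply, so one must either construct $v_n$ by hand via the localized Adams spectral sequence (checking that the parallel-lifting hypothesis of Theorem~\ref{MRS thm} holds for $z(n)$, which should follow from Corollary~\ref{mvassv}(2) exactly as for $y(n)$) or deduce it by naturality from $y(n)$'s self map — and in the latter case one must check the requisite lift along $z(n)\to y(n)$ exists, which is where the argument needs the most care. Once $v_n$ is in hand with positive Adams filtration, everything else follows formally.
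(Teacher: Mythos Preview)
There is a genuine gap in your construction of the self-map. The localized Adams spectral sequence of Section~\ref{Sec:LocASS} does not produce a self-map on $z(n)$: as set up there, one takes $Y=k(m)\wedge X$ and uses the $v_m$-self-map \emph{on $k(m)$} to localize. The output is $K(m)_*(X)$, not a self-map of $X$. So saying ``this furnishes the periodic self map'' conflates two different objects. Likewise, your alternative of lifting the $y(n)$-self-map along $z(n)\to y(n)$ is acknowledged as delicate but never actually carried out; there is no evident reason such a lift exists.

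The paper's route is more direct and avoids both issues. One analyzes the \emph{unlocalized} Adams spectral sequence for $z(n)$: via the Hopf algebra extension $E(\bxi_1)\to C(n)_*\to B(n)_*$ and the associated Cartan--Eilenberg spectral sequence, one sees that there is an element $v_n\in\Ext^{1,2^{n+1}-1}_{C(n)_*}(\f_2,\f_2)$ (inherited from the $y(n)$ computation in \cite[Lem.~3.5]{MRS01}) supporting an $h_0$-tower, and the Adams spectral sequence collapses in the relevant range. This gives a genuine homotopy class $v_n\in\pi_{2^{n+1}-2}(z(n))$. The self-map is then obtained via the $E_1$-ring structure as the composite
\[
S^{2^{n+1}-2}\wedge z(n)\xrightarrow{\,v_n\wedge\id\,} z(n)\wedge z(n)\xrightarrow{\,\mu\,} z(n),
\]
i.e.\ the adjoint of $v_n$ in right $z(n)$-modules. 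Positive Adams filtration then gives vanishing on $H_*$, and Corollary~\ref{mhzn} gives vanishing on $K(m)_*$ for $1\le m<n$, exactly as you say.

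Your treatment of the comodule structure on $H_*(z(n)/v_n)$ is also incomplete. You correctly get the short exact sequence and the additive splitting, but ``forces the comodule structure'' is not an argument. The paper first shows the extension of $\ca_*$-comodules is trivial by a change-of-rings computation of $\Ext^1_{\ca_*}(\Sigma^{2^{n+1}-1}H_*(z(n)),H_*(z(n)))\cong\Ext^1_{C(n)_*}(\Sigma^{2^{n+1}-1}H_*(z(n)),\f_2)$, so $H_*(z(n)/v_n)\cong H_*(z(n))\otimes E(x)$. To pin down the coaction on $x$, one observes that in the Adams spectral sequence for $z(n)/v_n$ the permanent cycle $\bxi_{n+1}\otimes 1$ must be killed, and the only candidate is $d_1(x)=\bxi_{n+1}\otimes 1$; since $d_1$ is computed from the coaction, this identifies $\psi_n(x)$ up to $d_1$-boundaries. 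The comparison is then with $H\z$, not $H\f_2$: the composite $\Sigma^{2^{n+1}-2}z(n)\to z(n)\to H\z$ is null, giving a map $z(n)/v_n\to H\z$ of $\ca_*$-comodules under which $x\mapsto\bxi_{n+1}$, which nails the coaction.
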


\begin{proof}
We analyze the Adams spectral sequence for $z(n)$ in a range. First we describe the input of the Adams spectral sequence. Following~\cite{MRS01}, write $B(n)_*$ for the quotient Hopf algebra 
$B(n)_*=\ca^{\vee}/(\bxi_1,\dots, \bxi_n)$ such that 
\[ 
H_*(y(n))=\ca^{\vee}\square_{B(n)_*}\f_2 \,.
\]
Let $C(n)_*$ denote the quotient Hopf algebra~$C(n)_*=\ca^{\vee}/(\bxi_1^2,\dots \bxi_n)$ such that 
\[
H_*(z(n))= \ca^{\vee}\square_{C(n)_*}\f_2 \,.
\]
There is a Hopf algebra extension 
\[
B(n)_* \longrightarrow  C(n)_*\longrightarrow E(\bxi_1) 
\] 
and an associated Cartan--Eilenberg spectral sequence 
\[ 
\Ext_{B(n)_*}^{s}(\mathbb{F}_2,\Ext_{E(\bxi_1)_*}^t(\mathbb{F}_2,\mathbb{F}_2) ) \cong \Ext^{s}_{B(n)_*}(\mathbb{F}_2,P(h_0)_t)\Longrightarrow \Ext^{s+t}_{C(n)_*}(\mathbb{F}_2,\mathbb{F}_2) 
\]
where we write $P(h_0)_t$ for the degree $t$ part of the graded ring $P(h_{0})$. The Cartan--Eilenberg spectral sequence collapses to the $(s,0)$-line because $|h_0|=|\xi_1|-1=0$. Since there are no classes in $\Ext_{B(n)_{*}}^k(\mathbb{F}_2,\mathbb{F}_2)$ in adjacent degrees for $k\le 2^{n+1}$ and the Adams spectral sequence for $y(n)$ collapses in this range by~\cite[Lem.~3.5]{MRS01}, the Adams spectral sequence for $z(n)$ also collapses in this range. Again, by~\cite[Lem. 3.5]{MRS01} there is an element~$v_n$ in Adams filtration one in $\Ext^{*,*}_{C(n)_*}(\mathbb{F}_2,\mathbb{F}_2)$ and we observe that it supports an $h_0$-tower. Consequently, there is an element~$v_n\in \pi_{2^{n+1}-2}(z(n))$ generating the $2$-adic integers~$\mathbb{Z}_2^{\wedge}$. 

Since $z(n)$ is an $\mathbb{E}_1$ ring spectrum we can produce a self map as the composite 
\[ 
S^{2^{n+1}-2}\wedge z(n) \xrightarrow{v_n \wedge \id_{z(n)}} z(n) \wedge z(n)_{2}^{\wedge} \to z(n) \,.
\]
This is also the map obtained by taking the adjoint to $v_n\co  S^{2^{n+1}-1}\to z(n)_{2}^{\wedge}$ in the category of (right) $z(n)$-modules. By Corollary~\ref{mhzn}, this map induces the zero map on $K(m)_*$ for $1\le m <n$, since it implies that the source and target of the map 
\[ 
\begin{tikzcd}
0 = K(m)_{*}\Sigma^{2p^{n}-2}z(n) \arrow{rr}{K(m)_{*}v_{n}} && K(m)_{*}z(n) =0
\end{tikzcd}
 \]
are zero using Corollary~\ref{mvassv} and therefore it can only be the zero map. This will be explained in more detail in Proposition~\ref{cczn}. It also induces the zero map on $H_*$ because $v_n$ is detected by an element in Adams filtration one.  

Therefore we have an extension  
\[ 
0 \to H_*(z(n)) \to H_*(z(n)/v_n) \to  \Sigma^{2^{n+1}-1}H_*(z(n)) \to 0 
\]
of $\ca^{\vee}$ comodules. The group of possible $\ca^{\vee}$-comodule extensions is given by
\[ 
\Ext_{\ca^{\vee}}^1(\Sigma^{2^{n+1}-1}H_*(z(n)),H_*(z(n)))\cong  \Ext_{C(n)_*}^1(\Sigma^{2^{n+1}-1}H_*(z(n)),\mathbb{F}_2) \,,
\]
using a change of rings isomorphism and the isomorphism~$H_*(z(n))\cong \ca^{\vee}\square_{C(n)_*}\mathbb{F}_2$. 

We are therefore reduced to examining the possible $C(n)_*$-comodule extensions
\[ 
0\to \mathbb{F}_2 \to E \to \Sigma^{2^{n+1}-1}H_*(z(n))\to 0 \,,
\]
but all such extensions of $C(n)_*$-comodules are trivial by examination of the grading preserving coaction. This implies $H_*(z(n)/v_n)\cong H_*(z(n))\otimes E(x)$ where $|x|=2^{n+1}-1$. 

We now use the Adams spectral sequence to determine the $\ca^{\vee}$-coaction on $x$. Note that $z(n)/v_n$ was obtained by coning off the element in homotopy detected by the permanent cycle~$\bxi_n\otimes 1$ in the Adams spectral sequence with
\[
\mathrm{E}_2 = \Ext_{C(n)_*}^{*,*}(\mathbb{F}_2, E(x)) \,.
\]
The only element that can kill $\bxi_n\otimes 1$ is $x$, so we have
\[
d_1(x)=\bxi_{n+1}\otimes 1 \,.
\]
On the other hand, the $d_1$-differentials in the Adams spectral sequence can be calculated using the formula for differentials in the cobar complex, so $d_1(x)=1\otimes x -\psi_n(x)$ where $\psi_n(x)$ is the coaction of $x$ in $H_*(z(n)/v_n)$. We therefore see that 
\[ 
\psi_n(x)=\bxi_{n+1}\otimes 1+1\otimes x+\text{($d_1$-boundaries)} \,.
\]

Finally, since the composite~$\Sigma^{2^{n+1}-2}z(n)\overset{v_n}{\to}z(n)\to z(\infty)=H\mathbb{Z}$ is nullhomotopic, there is a map~$z(n)/v_n\to H\mathbb{Z}$. This map sends $\bxi_{n+1}\otimes 1$ to the class with the same name in the cobar complex for $H\mathbb{Z}$. In the latter cobar complex, $\bxi_{n+1}\otimes 1$ is killed by a differential on $\bxi_{n+1}\in H_*(H\mathbb{Z})$. Therefore $x$ maps to $\bxi_{n+1}$ under the map of spectral sequences. Since the map~$H_*(z(n)/v_n)\to H_*(H\mathbb{Z})$ is a map of $\ca^{\vee}$-comodules, the coaction on $x$ coincides with the coaction on $\bxi_{n+1}$. Note also that there is no room for hidden comodule extensions because $|x|>|y|$ for all generators~$y$ of $H_*(z(n)/v_n)$. 
\end{proof} 

\begin{rem}
In fact, the spectrum~$z(n)/v_n$ may be constructed as the Thom spectrum of the map 
\[ S^{2p^n-1}\to B \GL_1(z(n))\]
adjoint to $v_n\in\pi_{2p^n-2}(\GL_1(z(n)))\cong \pi_{2p^n-2}(z(n))$. The first author would like to thank Jeremy Hahn for pointing this out. 
\end{rem}

We now determine the chromatic complexity of $z(n)/v_n$. 
\begin{cor}\label{znmvn}
The Margolis homology of $P(\bxi^2_1,\bxi_2,\ldots,\bxi_n) \otimes E(\bxi_{n+1})$, or equivalently the Margolis homology of $z(n)/v_n$, is given by
\[
H(H_*(z(n)/v_n);Q_m) \cong \begin{cases}
\f_2 \quad&\text{ if } m=0 \,, \\
0 \quad &\text{ if } 1 \leq m \leq n \,, \\
H_*(z(n)/v_n) \quad &\text{ if } m \geq n+1 \,.
\end{cases} 
\]
\end{cor}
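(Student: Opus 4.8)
The plan is to bootstrap from Corollary \ref{mhzn} via the short exact sequence of $E(Q_m)$-modules underlying the comodule extension in Lemma \ref{znsm} (one could alternatively redo the chain-complex decompositions of the earlier proofs, but the coupling of $\bxi_{n+1}$ to $\bxi_n$ under $Q_0$ makes that bookkeeping more delicate). Write $M = H_*(z(n))$ and $N = H_*(z(n)/v_n)$. First I would note that $M \subseteq N$ is a sub-$E(Q_m)$-module: $M$ is closed under $Q_m$, as one checks from \eqref{Qaction}, since $Q_m(\bxi_k) = \bxi_{k-m-1}^{2^{m+1}}$ lies in $P(\bxi_1^2,\bxi_2,\ldots,\bxi_n)$ for every generator with $1 \le k \le n$ (the only delicate index is $k-m-1 = 1$, where $\bxi_1^{2^{m+1}} = (\bxi_1^2)^{2^m}$). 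Since $z(n)/v_n$ is a $z(n)$-module and $N$ is free over $M$ on $\{1,\bxi_{n+1}\}$ by Lemma \ref{znsm}, the operator $Q_m$ obeys the Leibniz rule over the $M$-action, so $Q_m(a + b\bxi_{n+1}) = Q_m(a) + Q_m(b)\bxi_{n+1} + b\,Q_m(\bxi_{n+1})$ with $Q_m(\bxi_{n+1}) \in M$. Hence $N/M \cong \Sigma^{2^{n+1}-1}M$ as complexes with differential $Q_m$, and the short exact sequence $0 \to M \to N \to \Sigma^{2^{n+1}-1}M \to 0$ yields a long exact sequence in Margolis homology.

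Next I would identify the connecting map $\partial\co H(\Sigma^{2^{n+1}-1}M; Q_m) \to H(M;Q_m)$. A cycle in $N/M$ is represented by $b\bxi_{n+1}$ with $b \in \ker(Q_m|M)$, and $Q_m(b\bxi_{n+1}) = b\,Q_m(\bxi_{n+1})$, so under the evident identification $\partial$ sends $[b]$ to $[b\cdot Q_m(\bxi_{n+1})]$. Since the coaction on $\bxi_{n+1} \in N$ agrees with that on $\bxi_{n+1} \in \ca_*$ (Lemma \ref{znsm}), formula \eqref{Qaction} gives $Q_m(\bxi_{n+1}) = \bxi_{n-m}^{2^{m+1}}$ for $0 \le m \le n$ (with $\bxi_0 = 1$) and $Q_m(\bxi_{n+1}) = 0$ for $m \ge n+1$.

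Finally I would combine this with Corollary \ref{mhzn} in the four regimes. For $m \ge n+1$, the operator $Q_m$ vanishes on all of $N$ (equivalently $\partial = 0$ and $H(M;Q_m) = M$), so $H(z(n)/v_n;Q_m) = H_*(z(n)/v_n)$. For $1 \le m \le n-1$, $H(M;Q_m) = 0$, so the long exact sequence forces $H(N;Q_m) = 0$. For $m = n$, $Q_n(\bxi_{n+1}) = 1$ and $H(M;Q_n) = M$, so $\partial$ is an isomorphism and $H(N;Q_n) = 0$. For $m = 0$, $Q_0(\bxi_{n+1}) = \bxi_n^2$ and $H(M;Q_0) \cong \f_2[\bxi_n^2]$ by Corollary \ref{mhzn}, on which multiplication by $\bxi_n^2$ is injective with one-dimensional cokernel, so the long exact sequence gives $H(z(n)/v_n;Q_0) \cong \f_2$. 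The main obstacle is purely bookkeeping: checking that $M$ is $Q_m$-stable and that $N/M$ is genuinely $\Sigma^{2^{n+1}-1}M$ as an $E(Q_m)$-complex, so that the long exact sequence exists with $\partial$ of the stated form, and then correctly evaluating $Q_m(\bxi_{n+1})$ in each of the regimes $m < n$, $m = n$, $m > n$; no ideas beyond Corollary \ref{mhzn} are required.
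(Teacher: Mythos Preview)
Your argument is correct. The long exact sequence in Margolis homology associated to the short exact sequence $0\to H_*(z(n))\to H_*(z(n)/v_n)\to \Sigma^{2^{n+1}-1}H_*(z(n))\to 0$ of $E(Q_m)$-modules, together with the identification of the connecting map as multiplication by $Q_m(\bxi_{n+1})$, cleanly reduces everything to Corollary~\ref{mhzn}. Each of the four cases goes through as you say.

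This is, however, a genuinely different route from the paper's. The paper stays with the explicit K\"unneth-style decompositions of Lemmas~\ref{mhfp} and~\ref{mhyn}: for $m=n$ it tensors the complexes for $z(n)$ with the acyclic two-term complex $\f_2\{1\}\leftarrow\f_2\{\bxi_{n+1}\}$; for $m=0$ it observes that adjoining $\bxi_{n+1}$ restores the full complex $(c_n)$ (rather than the truncated $(c_n')$), whose homology is $\f_2$; and the remaining cases are handled as before. Your approach trades that case-by-case bookkeeping for one structural input (the snake lemma) plus a clean evaluation of $Q_m(\bxi_{n+1})$, and it makes the $m=0$ case especially transparent: the answer is exactly the cokernel of multiplication by $\bxi_n^2$ on $P(\bxi_n^2)$. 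The paper's approach has the virtue of being uniform with the preceding proofs and of exhibiting the answer as a tensor product of tiny complexes, which is sometimes useful for seeing the $\ca_*$-comodule structure; your approach is shorter and would generalize more readily to further cofibers.
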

\begin{proof}
The proof in the case~$m=n$ follows by tensoring the complexes from the previous corollary with the complex
\[
\f_2\{1\} \leftarrow \f_2\{\bxi_{n+1}\} \,.
\]
Since this complex is $Q_n$-acyclic, we observe that $H(H_*(z(n)/v_n);Q_n)\cong 0$. For $m=0$, we make the following adjustment. Rather than replacing $(c_n)$ with $(c_n')$ as in Lemma~\ref{mhyn}, we keep the complex~$(c_n)$ and remove $(c_r)$ for $r>n$. This has the consequence that $H_*(z(n)/v_n,Q_0)\cong \f_2$. In the case~$0<m<n$, we only replace $(c_r)$ with $(c_r')$ for $n-m<r\le n$. The Margolis homology is still trivial because we are tensoring with the acyclic complex~$\f_2\{1\} \leftarrow \f_2\{\bxi_m\}$. The case~$m>n$ is exactly the same as in Lemma~\ref{mhyn}
\end{proof}
 
We can assemble these Margolis homology computations to study $K(m)_*(X)$ for $X = y(n)$, $z(n)$, and~$z(n)/v_n$.

\begin{prop}\label{cczn}
The chromatic complexity of $y(n)$, $z(n)$ and~$z(n)/v_n$ may be described as follows: 
\begin{itemize}
\item The spectrum~$y(n)$ is $K(m)$-acyclic for $0 \leq m \leq n-1$, and~$K(n)_*(y(n)) \neq 0$. 
\item The spectrum~$z(n)$ is $K(m)$-acyclic for $1 \leq m \leq n-1$, and~$K(m)_*(z(n)) \neq 0$ for $m=0,n$. 
\item The spectrum~$z(n)/v_n$ is $K(m)$-acyclic for $1 \leq m \leq n$, and~$K(0)_*(z(n)/v_n) \neq 0$.
\end{itemize}
\end{prop}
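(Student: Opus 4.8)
The plan is to deduce Proposition \ref{cczn} directly from the Margolis homology computations already in hand, using Corollary \ref{mvassv} to pass from vanishing of Margolis homology to vanishing of Morava $K$-theory, and using the localized Adams spectral sequence together with a degree/vanishing-line argument to detect the nonvanishing cases. All three spectra $X = y(n)$, $z(n)$, $z(n)/v_n$ are connective with finite-type homology (as computed in Section \ref{Sec:ynznConstruction}), so the hypotheses of Corollary \ref{mvassv} are satisfied for all $m \geq 1$.

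For the acyclicity statements, I would simply combine the relevant Margolis homology computation with part (1) of Corollary \ref{mvassv}. Concretely: Lemma \ref{mhyn} gives $H(y(n);Q_m) = 0$ for $0 \le m \le n-1$, so $v_m^{-1}E_2 = 0$ and hence $K(m)_*(y(n)) = 0$ in that range; Corollary \ref{mhzn} gives $H(z(n);Q_m) = 0$ for $1 \le m \le n-1$, yielding $K(m)_*(z(n)) = 0$ there; and Corollary \ref{znmvn} gives $H(z(n)/v_n;Q_m) = 0$ for $1 \le m \le n$, yielding $K(m)_*(z(n)/v_n) = 0$. One subtlety: the case $m=0$ is $K(0) = H\mathbb{Q}$, which is not covered by the Morava $K$-theory / Margolis homology machinery of Section \ref{Sec:LocASS} (that requires $m \ge 1$); for the rational statements I would instead argue directly from the homology computations that $H\mathbb{Q}_*(z(n))$ and $H\mathbb{Q}_*(z(n)/v_n)$ are nonzero, e.g.\ because $H_*(z(n))$ is a free module over $E(Q_0)$-complementary pieces whose $Q_0$-Margolis homology is nonzero by Corollary \ref{mhzn}, which forces the rational homology to be nonzero (a torsion-free summand survives); concretely the class $\bxi_n^2$ and its powers are not hit rationally.

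For the nonvanishing cases with $m = n \ge 1$, the idea is that $H(X;Q_n) \ne 0$ (indeed equals all of $H_*(X)$ by Lemma \ref{mhyn} and Corollary \ref{mhzn}) together with Lemma \ref{me2} shows that $v_n^{-1}E_2 = H(X;Q_n) \otimes \mathbb{F}_2[v_n^{\pm 1}] \ne 0$, and since the localized Adams spectral sequence for $k(n)\wedge X$ converges to $K(n)_*(X)$ by part (3) of Corollary \ref{mvassv}, I need to rule out the possibility that everything on $v_n^{-1}E_2$ dies under differentials. The cleanest way is a connectivity/edge argument: $X$ is connective, so in the localized Adams chart the lowest-filtration nonzero class sits on (or near) the vanishing line of slope $1/|v_n|$ guaranteed by Corollary \ref{mvassv}(2), and there is no room below it for an incoming differential; alternatively, since $H(X;Q_n) = H_*(X)$ is all permanent cycles one expects the localized spectral sequence to collapse, but even without collapse the bottom class of $H_*(X)$ (the unit in degree $0$) generates a $v_n$-periodic family that cannot be truncated from below, so $K(n)_*(X) \ne 0$. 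The same argument handles $K(n)_*(z(n)) \ne 0$ verbatim.

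The main obstacle I anticipate is precisely this last point — ruling out differentials in the localized Adams spectral sequence that would kill all of $v_n^{-1}E_2$. For $y(n)$ this is effectively handled by the remark after Theorem \ref{MRS thm} citing \cite[Sec. 2.3]{MRS01}, where Mahowald--Ravenel--Shick compute $v_n^{-1}E_2$ and observe convergence, and one can quote their result that $y(n)$ has type $n$. For $z(n)$ one does not have that literature directly, so I would either (a) argue via the map $z(n) \to y(n)$, but that goes the wrong way for detecting nonvanishing (a $K(n)$-equivalence statement is not available), so instead (b) use the Hopf algebra extension $E(\bxi_1) \to C(n)_* \to B(n)_*$ from the proof of Lemma \ref{znsm} to compare the localized Adams spectral sequences for $z(n)$ and $y(n)$ via a Cartan--Eilenberg spectral sequence, showing that the $v_n$-periodic classes in the $z(n)$ chart are also permanent cycles. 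A cleaner alternative, which I would pursue first: $K(n) \wedge H\mathbb{Z}$ is known to be nonzero (e.g.\ $K(n)_*(H\mathbb{Z}) \ne 0$ for $n \ge 1$ by classical computations), and since $H\mathbb{Z}$ is built from $z(n)$ by attaching cells of the form $\bxi_j$, $j > n$, all of which are $Q_n$-acyclic pieces, the map $z(n) \to H\mathbb{Z}$ should be a $K(n)$-equivalence; comparing Margolis homologies $H(z(n);Q_n) = H_*(z(n))$ and $H(H\mathbb{Z};Q_n) = H_*(H\mathbb{Z})$ (the latter follows from the same complexes) confirms this, and then $K(n)_*(z(n)) \cong K(n)_*(H\mathbb{Z}) \ne 0$. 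For $z(n)/v_n$ the proposition only asserts acyclicity for $1 \le m \le n$ and nonvanishing for $m=0$, both of which are already covered by the argument above, so no additional work is needed there.
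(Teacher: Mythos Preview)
Your treatment of the acyclicity statements matches the paper exactly: Margolis homology vanishing plus Corollary \ref{mvassv} is precisely what the authors do.

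For the nonvanishing at $m=n$, however, there is a genuine error. Your ``cleaner alternative'' asserts that $K(n)_*(H\mathbb{Z}) \neq 0$ for $n \geq 1$, but this is false: by Corollary \ref{mhz} in the paper, $H(H\mathbb{Z};Q_n) = 0$ for all $n \geq 1$, so $K(n)_*(H\mathbb{Z}) = 0$. Thus the comparison $z(n) \to H\mathbb{Z}$ cannot detect $K(n)$-nontriviality. Your primary approach via the localized Adams spectral sequence is also not made rigorous: after inverting $v_n$ the spectral sequence is full-plane, so there is no ``bottom class with nothing below it'' argument available, and ``one expects the spectral sequence to collapse'' is not a proof.

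The paper avoids this entirely by using the Atiyah--Hirzebruch spectral sequence
\[
H_*(y(n);K(n)_*) \Rightarrow K(n)_*(y(n)).
\]
Since both $K(n)$ and $y(n)$ are $E_1$ ring spectra, this spectral sequence is multiplicative. The multiplicative generators lie on the zero line ($v_n$, which cannot support a differential in a right-half-plane spectral sequence) and the zero column ($\bxi_1,\dots,\bxi_n$, all in degree $\leq 2^n-1$). Since $|v_n| = 2^{n+1}-2$, no differential $d_r$ with $r < 2^{n+1}-1$ can hit a nonzero target, and by then the generators on the zero column are already permanent cycles. Hence the spectral sequence collapses and $K(n)_*(y(n)) \cong K(n)_* \otimes H_*(y(n)) \neq 0$; the same argument works verbatim for $z(n)$. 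This gives more than mere nonvanishing: it computes $K(n)_*(y(n))$ outright. For the $m=0$ cases the paper argues via the maps $z(n) \to H\mathbb{Z}$ and $z(n)/v_n \to H\mathbb{Z}$, which resolve the $h_0$-tower in the zero stem of the Adams spectral sequence and give $H\mathbb{Q}_0 \cong \mathbb{Q}$; your sketch here is in the right spirit but less direct.
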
 

\begin{proof}
We begin with showing $K(m)$-acyclicity. First, note that $\pi_{0}y(n)=\mathbb{F}_{p}$ so the homotopy groups of $y(n)$ are torsion and therefore $K(0)_{*}y(n)=0$. 
Let $R \in \{y(n), z(n), z(n)/v_n\}$. Since $R$ is connective, the localized Adams spectral sequence converges to $K(m)_*(R)$ by Corollary~\ref{mvassv} for $m\ge 1$. By the same corollary, we have
$$v_m^{-1}E_2 = v_m^{-1}\Ext_{E(\overline{\xi}_{m+1})}^{*,*}(\f_2,H_*(R)) = 0$$
whenever $H(H_*(R);Q_m)$ vanishes. Lemmas~\ref{mhyn} and Corollaries~\ref{mhzn} and~\ref{znmvn} prove that these Margolis homology groups vanish for the claimed ranges of $m$. 

We now argue that $K(0)_*(z(n))$ and $K(0)_*(z(n)/v_n)$ are nonzero. Recalling that $K(0) = H\mathbb{Q}$ is rational homology, it suffices to produce a torsion-free summand in the homotopy groups of $z(n)$ and $z(n)/v_n$. The $z(n)$-analogue of Lemma~\ref{Lem:ynF2} implies that the map $z(n) \to H\mathbb{Z}_{2}$ induces an isomorphism on $\pi_0$, so $\pi_0(z(n)) \cong \mathbb{Z}_{2}$ and thus $K(0)_*(z(n)) \neq 0$. Similarly, considering the Adams spectral sequences converging to the map $z(n) \to z(n)/v_n$ shows that $\pi_0(z(n)) \cong \pi_0(z(n)/v_n)$, so $K(0)_*(z(n)/v_n) \neq 0$. 

Finally, we argue that for $R \in \{y(n), z(n)\}$ that $K(n)_*(R) \neq 0$. Since $K(n)$ and $R$ are both $\mathbb{E}_1$~ring spectra, the Atiyah--Hirzebruch spectral sequence
\[
\mathrm{E}^{2}_{s,t}=H_s(R;K(n)_t) \Longrightarrow K(n)_{s+t}(R)
\]
is multiplicative, with multiplicative generators all either on the zero line or the zero column (using homological Serre grading). Since we are using the homological Serre grading the differentials are of the form $d_{r}:\mathrm{E}^{r}_{s,t}\to \mathrm{E}^{r}_{s-r,t+r-1}$. The spectral sequence is a right half-plane spectral sequence, so the generators on the zero column cannot support differentials. By Lemma~\ref{lem:homology}, the algebra generators in the zero line are all in degrees less than or equal to $2^n-1$. Since $|v_n| = 2^{n+1}-2$ and $v_{n}$, the $\mathrm{E}^2$-page is isomorphic to the $\mathrm{E}^{2^{n+1}-1}$-page and thus the spectral sequence collapses. Consequently, there is an isomorphism
\[
K(n)_*(R) \cong K(n)_* \otimes H_*(R) \neq 0 .
\]
\end{proof}

\section{Homology of topological Hochschild homology of $y(n)$}\label{Sec:THH}
We now turn to the study of the topological Hochschild homology of $y(n)$. We begin by computing $H_*(\THH(y(n)))$ using the B{\"o}kstedt spectral sequence \cite{BokZ}. We then analyze the map~$\phi_n : H_*(\THH(y(n))) \to H_*(\THH(H\f_2))$. 

\begin{rem}
The calculations in this section and the sequel are complicated by the fact that the spectrum~$\THH(y(n))$ does not admit a ring structure since $y(n)$ is an $\mathbb{E}_1$ ring spectrum, but not an $\mathbb{E}_2$~ring spectrum. Therefore we will only prove \emph{additive isomorphisms} throughout the remaining sections since there is no multiplicative structure on $H_*(\THH(y(n)))$. 
\end{rem}

We will use the map~$(\phi_n)_*: H_*(\THH(y(n))) \to H_*(\THH(H\f_2))$ to name the classes in $H_*(\THH(y(n)))$. This map can be understood modulo intermediary filtration in the sense of Remark~\ref{Rmk:IntermediaryFiltration}. We thank an anonomous referee for sharing this remark, which helps to clarify many of our later arguments. 

\begin{rem}\label{Rmk:IntermediaryFiltration}
Recall that for any $\mathbb{E}_1$~ring spectrum $R$, we have $\THH(R) \simeq |B_\bullet^{cyc}(R)|$, where $B_\bullet^{cyc}$ is the cyclic bar construction. The \emph{B{\"o}kstedt filtration} defines a filtered spectrum~$F_\bullet \THH(R)$, which is obtained by setting $F_n \THH(R) := |\text{sk}_n(B^{cyc}_{\bullet} R)|$ where $\text{sk}_n$ is the $n$-skeleton functor. The homology of this filtered spectrum is then a filtered graded abelian group, with $F_n H_*(\THH(R)) = \im ( H_*(F_n \THH(R))\to H_{*}(\THH(R)))$. 
Let 
\begin{equation}\label{Eqn:BokstedtFiltration}
F_0 \subseteq F_1 \subseteq F_2 \subseteq \cdots \subseteq H_*(\THH(\f_2))
\end{equation}
denote this filtration in the case $R=H\f_2$. The map~$(\phi_n)_*: H_*(\THH(y(n))) \to H_*(\THH(H\f_2))$ is an injective map of filtered graded abelian groups. We will name a class in $H_*(\THH(y(n)))$ which maps nontrivially to the complement~$F_k \setminus F_{k-1}$ by its name in the quotient~$F_k / F_{k-1}$. Therefore, by definition, a class~$x \in H_*(\THH(y(n)))$ maps to the class with the same name in $H_*(\THH(H\f_2))$, modulo lower B{\"o}kstedt filtration.

In fact, we can say slightly more. Since $H\f_2$ is an $\mathbb{E}_{\infty}$~ring spectrum, the $0$-simplices of $B_\bullet^{cyc}(H\f_2)$ split off, and we can rewrite the filtration~\eqref{Eqn:BokstedtFiltration} as
\[
F_0 \subseteq F_0 \oplus \bar{F}_1 \subseteq F_0 \oplus \bar{F}_2 \subseteq \cdots \subseteq H_*(\THH(H\f_2))\,.
\]
Therefore, any class~$x \in H_*(\THH(y(n)))$ mapping to the complement~$F_k \setminus F_{k-1}$ has a well-defined image in $F_0 \oplus F_k/F_{k-1}$. In this sense, we understand classes in $H_*(\THH(y(n)))$ modulo \emph{intermediary filtration}. 

In particular, classes in B{\"o}kstedt filtrations $0$ and~$1$ of $H_*(\THH(y(n)))$ have no indeterminacy modulo intermediary filtration. Further, we obtain a homomorphism
\[
f_0 : H_*(\THH(y(n))) \to F_0 = \ca^{\vee}
\]
which can be used to access the $\ca^{\vee}$-comodule structure of $H_*(\THH(y(n)))$ as we discuss next.
\end{rem}

The $\ca^{\vee}$-coaction on $H_*(\THH(y(n)))$ will be denoted 
\[ 
\nu_n\co H_*(\THH(y(n))) \to \ca^{\vee} \otimes  H_*(\THH(y(n))) 
\]
for $0\le n\le \infty$ with the convention that $y(\infty)=H\mathbb{F}_2$.

\begin{prop}\label{Prop:HTHH}
There is an isomorphism of graded left $\ca^{\vee}$-comodules 
\[ 
H_*(\THH(y(n))) \cong H_*(y(n)) \otimes E(\sigma \bxi_1, \sigma \bxi_2,\ldots \sigma \bxi_n),
\]
$|\sigma \bxi_i| = 2^i$, where the coaction 
\[ 
\nu_n\co  H_*(\THH(y(n)))\to \ca^{\vee} \otimes H_*(\THH(y(n)))
\]
on elements~$x\in H_*(y(n))$ is determined by the restriction of the coproduct on $\ca^{\vee}$ to $H_*(y(n))\subset \ca^{\vee}$, the coaction on $\sigma \bxi_i$ is determined by the formula~$\nu_n(\sigma \bxi_i)=(1\otimes \sigma )\nu_n(\bxi_i)$, and the coaction on classes of the form $xy$ is determined by $\nu_n(xy) = \nu_n(x)\nu_n(y)$. 
\end{prop}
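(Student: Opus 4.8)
The plan is to run the B\"okstedt spectral sequence for $THH(y(n))$ and to import everything that cannot be read off the spectrum directly --- the collapse, the absence of additive extensions, and the $\ca_*$-comodule structure --- from B\"okstedt's computation of $H_*(THH(H\f_2))$ via the map $y(n)\to H\f_2$. Routing through $H\f_2$ is forced on us: since $y(n)$ is only $E_1$, the spectrum $THH(y(n))$ is not a ring spectrum and $H_*(THH(y(n)))$ carries no intrinsic multiplication, so all the apparently multiplicative structure in the statement must be obtained by restriction from the genuine comodule algebra $H_*(THH(H\f_2))$.

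\emph{Step 1: the spectral sequence and its $E^2$-page.} For any $E_1$ ring spectrum $R$ the skeletal filtration of the cyclic bar construction $R^{\wedge\bullet+1}$ yields a spectral sequence of $\ca_*$-comodules, natural in $R$; working over the field $\f_2$, the K\"unneth theorem identifies the $E^1$-page with the Hochschild complex of the $\f_2$-algebra $H_*(R)$, so
\[ E^2_{**} = HH^{\f_2}_*(H_*(R)) \Longrightarrow H_*(THH(R)), \]
with no ring structure on $H_*(THH(R))$ needed. Since $H_*(y(n)) = P(\bxi_1,\ldots,\bxi_n)$ is polynomial, the Koszul resolution together with the K\"unneth formula for Hochschild homology of a tensor product of algebras gives $HH^{\f_2}_*(P(\bxi_1,\ldots,\bxi_n)) \cong P(\bxi_1,\ldots,\bxi_n)\otimes E(\sigma\bxi_1,\ldots,\sigma\bxi_n)$, with $\sigma\bxi_i$ of filtration $1$ and internal degree $|\bxi_i|+1 = 2^i$. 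Applying the same computation to $H_*(H\f_2) = \ca_* = P(\bxi_i : i\ge 1)$ recovers B\"okstedt's input $\ca_*\otimes E(\sigma\bxi_i : i\ge 1)$, and B\"okstedt's theorem asserts that this spectral sequence collapses at $E^2$ with no extensions, identifying $H_*(THH(H\f_2))$ with the comodule algebra $\ca_*\otimes E(\sigma\bxi_i : i\ge 1)$ in which $\bxi_i$ coacts by the restriction of $\psi$ and $\sigma\bxi_i$ coacts by $(1\otimes\sigma)\nu(\bxi_i)$.

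\emph{Step 2: collapse and no extensions for $y(n)$.} The ring map $H_*(y(n)) = P(\bxi_{\le n})\hookrightarrow \ca_* = P(\bxi_i : i\ge 1)$ presents $\ca_*$ as a free $P(\bxi_{\le n})$-module, so the induced map on Hochschild homology, hence on $E^2$-pages of the B\"okstedt spectral sequences, is the split inclusion $P(\bxi_{\le n})\otimes E(\sigma\bxi_{\le n})\hookrightarrow \ca_*\otimes E(\sigma\bxi_i : i\ge 1)$. Because the target spectral sequence has no differentials and this comparison is injective on $E^2$, an induction over pages forces every differential in the $y(n)$-spectral sequence to vanish (a nonzero $d_r$ would map to a nonzero $d_r$ downstairs), so it collapses at $E^2$. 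The filtered map $H_*(THH(y(n)))\to H_*(THH(H\f_2))$ realizes this same injection on associated graded objects; since the filtrations are exhaustive and bounded below (connectivity of $y(n)$), the map is injective, and comparing graded Poincar\'e series degree by degree shows $H_*(THH(y(n)))$ is precisely the graded subspace of $H_*(THH(H\f_2))$ spanned by the monomials in $\bxi_1,\ldots,\bxi_n,\sigma\bxi_1,\ldots,\sigma\bxi_n$. In particular it is abstractly isomorphic to $H_*(y(n))\otimes E(\sigma\bxi_1,\ldots,\sigma\bxi_n)$ as a graded $\f_2$-vector space.

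\emph{Step 3: the comodule structure, and the main obstacle.} As the inclusion $H_*(THH(y(n)))\hookrightarrow H_*(THH(H\f_2))$ is a map of $\ca_*$-comodules whose image is the subspace just identified, the coaction $\nu_n$ is simply the restriction of the coaction on $H_*(THH(H\f_2))$. One then checks this subspace is a subcomodule: $\psi(\bxi_k)$ involves only $\bxi_j$ with $j\le k$, and $\sigma$ preserves the span of $\bxi_1,\ldots,\bxi_n$, so the coaction of any monomial in the generators again lands in the subspace tensored with $\ca_*$. This yields exactly the formulas in the statement; in particular the ``symbolic product'' rule $\nu_n(xy)=\nu_n(x)\nu_n(y)$ --- which a priori makes sense only on the associated graded --- becomes an honest identity once $xy$ is interpreted as the corresponding product inside the ring $H_*(THH(H\f_2))$. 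The computational core is routine; the one genuine subtlety is this bookkeeping, namely keeping straight what is intrinsic to $H_*(THH(y(n)))$ (where classes are defined only up to lower B\"okstedt filtration, so ``deeper'' structure such as the $\ca_*$-coaction is genuinely ambiguous in isolation) versus what is imported and rigidified by the embedding into $H_*(THH(H\f_2))$.
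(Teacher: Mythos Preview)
Your Steps~1 and the collapse portion of Step~2 are essentially the paper's argument: compute the $E^2$-page as Hochschild homology of a polynomial algebra, inject into the B\"okstedt spectral sequence for $H\f_2$, and use the collapse there (generators in filtration $\le 1$) to force collapse for $y(n)$. That part is fine and matches the paper.

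The genuine gap is in the second half of Step~2 and in Step~3. You write that ``comparing graded Poincar\'e series degree by degree shows $H_*(THH(y(n)))$ is precisely the graded subspace of $H_*(THH(H\f_2))$ spanned by the monomials in $\bxi_1,\ldots,\bxi_n,\sigma\bxi_1,\ldots,\sigma\bxi_n$.'' This is a non sequitur: matching Poincar\'e series tells you the dimensions agree, not which subspace you land in. In fact the image is \emph{not} that obvious subspace. The paper shows later (Example~\ref{Examplenequals1}, Proposition~\ref{Prop:ymapF}) that, for instance, $(\phi_1)_*(\bxi_1\sigma\bxi_1)=\bxi_1\sigma\bxi_1+\bxi_2$, and more generally $(\phi_n)_*(\bxi_i x_i)=\bxi_i x_i+\bxi_{n+1}$: the image genuinely hits $\bxi_{n+1}$, which lies outside your candidate subspace. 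So your Step~3, which computes the coaction by restricting $\nu_\infty$ to the obvious subspace and then checking that subspace is a subcomodule, is computing the coaction on the wrong object. Indeed the paper's Example~\ref{Examplenequals1} produces extra terms in $\nu_1(\bxi_1\sigma\bxi_1)$ (coming from $\nu_\infty(\bxi_2)$) that your formula would miss.

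What actually survives is exactly what the paper proves: the injection on $E^2=E^\infty$ identifies the \emph{associated graded} of the map with the obvious inclusion, so $(\phi_n)_*(x)=x+(\text{lower B\"okstedt filtration})$, and the coaction formula $\nu_n(\sigma\bxi_i)=(1\otimes\sigma)\nu_n(\bxi_i)$ holds because $\sigma$ is induced by a map of spectra $\bT\wedge R\to THH(R)$. Together these determine the $\ca_*$-coaction only modulo lower B\"okstedt filtration, which is the precision the proposition is really asserting (cf.\ the remark immediately preceding it). You acknowledged this bookkeeping issue at the end of your Step~3, but your proposed resolution---rigidifying via the embedding into the obvious monomial subspace---is precisely the step that fails.
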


\begin{proof}
The $\mathrm{E}_2$-page of the B{\"o}kstedt spectral sequence
\[
E^{*,*}_2 \cong \HH_*(H_*(y(n))) \cong P(\bxi_1,\ldots,\bxi_n) \otimes E(\sigma \bxi_1,\ldots,\sigma \bxi_n)
\]
 maps injectively to the $\mathrm{E}_2$-page of the B\"okstedt spectral sequence for $H\f_2$. The latter spectral sequence is multiplicative and all the algebra generators are concentrated in B\"okstedt filtration zero and one. Consequently, the B\"okstedt spectral sequence for $H\f_2$ collapses and the injective map of spectral sequences implies that the B\"okstedt spectral sequence for $y(n)$ also collapses.

The B\"okstedt spectral sequence is a spectral sequence of $\ca^{\vee}$-comodules and the formula~$\nu_n(\sigma x)=(1\otimes \sigma )\nu_n(x)$ holds because the operator $\sigma$ is induced by a map of spectra~$\bT\wedge R\to \THH(R)$ (see e.g.~\cite[Eq.~5.11]{AR05}) and this determines the $\ca^{\vee}$-coaction modulo lower B\"okstedt filtration. Here $\bT\subset \mathbb{C}$ denotes the circle regarded as a topological group with the subspace topology.
\end{proof}

We will use the fact that the B{\"o}kstedt spectral sequence computing $H_*(\THH(y(n)))$ agrees with the B\"okstedt spectral sequence computing $H_*(\THH(H\f_2))$ up until degree~$2^{n+1}-2 = |\bxi_{n+1}|-1$. We will also frequently use the map 
\[ 
\phi_n\co \THH(y(n)) \longrightarrow \THH(H\f_2)
\] 
induced by the map~$y(n) \to H\mathbb{F}_2$. The rest of this section is dedicated to studying the induced map on homology 
\[ 
(\phi_n)_* \co H_*(\THH(y(n))) \to H_*(\THH(H\f_2)) \,. 
\] 

\begin{rem}\label{Rmk:SigmaDerivation}
The spectrum~$\THH(y(n))$ has a canonical circle action~$\bT_+ \wedge \THH(y(n)) \to \THH(y(n))$ compatible with a structure map~$\sigma \co \bT \wedge  y(n)\to \THH(y(n))$. Though the spectrum~$\THH(y(n))$ is not a ring spectrum, we can refer to `products' in $H_*(\THH(y(n))$ since the classes in $H_*(\THH(y(n)))$ are named by their image in the ring~$H_*(\THH(H\f_2))$ (cf.~Remark~\ref{Rmk:IntermediaryFiltration}). With this convention, the map~$\sigma$  acts as though it were a derivation on $H_*(\THH(y(n)))$ as in~\cite[Prop.~3.2]{MS93}. 
Indeed, the proof of~\cite[Prop.~3.2]{MS93} only relies on $R$ being an $\mathbb{E}_1$~ring spectrum. 
This behavior will be important for our analysis in the next section since the structure map~$\sigma$ determines the $d^2$-differentials in the homological $\bT$-Tate spectral sequence. 

If $x \in H_*(\THH(y(n)))$ satisfies $\sigma(x) = 0$, we will refer to $x$ as a \emph{$\sigma$-cycle}. If $x = \sigma(y)$ for some $y \in H_*(\THH(y(n)))$, we will refer to $x$ as a \emph{$\sigma$-boundary}. 
\end{rem}

We are now ready to prove the main result of this section, Proposition~\ref{Prop:ymapF}. Before that, we include the following example to illustrate some subtleties in understanding $(\phi_n)_*$. 

\begin{exm}\label{Examplenequals1}
We will fully describe the map 
\[ 
P(\bxi_1) \otimes E(\sigma \bxi_1) \cong H_*(\THH(y(1))) \overset{(\phi_1)_*}{\longrightarrow} H_*(\THH(H\f_2)) \cong P(\bxi_1,\bxi_2,\ldots) \otimes P(\sigma \bxi_1)
\]
induced by the map $\phi_1\co \THH( y(1)) \to \THH(H\f_2)$ as a map of $E(\sigma)$-modules in the category of $\ca^{\vee}$-comodules. We first describe this map as map of $E(\sigma)$-modules. 

By Remark~\ref{Rmk:IntermediaryFiltration}, we have $(\phi_1)_*(\bxi^i_1) = \bxi^i_1$ since $\bxi_1^i \in H_i(\THH(y(1)))$ has B{\"o}kstedt filtration zero. As noted already, the map $(\phi_1)_*$ sends classes in $H_*(\THH(y(n)))$ in B{\"o}kstedt filtration zero to the classes with the same name in $H_*(\THH(H\f_2))$. 

Moving on to B{\"o}kstedt filtration one, we know that either
\[ 
(\phi_1)_*(\sigma \bxi_1) = \sigma \bxi_1 \text{ or }(\phi_1)_*(\sigma \bxi_1) = \sigma \bxi_1 + \bxi_1^2
\] 
for degree reasons. If the latter formula holds, we may simply change our basis for the vector space~$H_2(\THH(y(1)))\cong \mathbb{F}_2\{\sigma \bxi_1,\bxi_1^2\}$ to account for this, so we may assume the former. 

We now analyze the key case. Consider the class~$\bxi_1 \sigma \bxi_1 \in H_3(\THH(y(1)))$. We claim that $(\phi_1)_*(\bxi_1 \sigma \bxi_1) \neq \bxi_1 \sigma \bxi_1$. In fact, we know that $\sigma (\bxi_1 \sigma \bxi_1)=0$ in $H_*(\THH(y(1)))$ and therefore 
$\bxi_1\sigma \bxi_1$ must map to a $\sigma$-cycle in $H_*(\THH(H\f_2))$. By Remark~\ref{Rmk:IntermediaryFiltration}, we know that $\bxi_1\sigma \bxi_1$ maps to the class of the same name modulo classes in lower B\"okstedt filtration. We also know that in $H_*(\THH(H\f_2))$, the equality $\sigma (\bxi_1\sigma \bxi_1)=\sigma \bxi_2$ holds. Therefore, there is an equality~$(\phi_1)_*(\bxi_1\sigma \bxi_1)=\bxi_1\sigma\bxi_1+y$ where $y$ is in B\"okstedt filtration zero and there is an equality~$\sigma y=\sigma\bxi_2$. The only such element in $H_*(\THH(H\f_2))$ with these properties is $\bxi_2$ itself. Thus, 
\[ 
(\phi_1)_*(\bxi_1\sigma \bxi_1)=\bxi_1\sigma\bxi_1+\bxi_2 \,.
\]
We then claim that $(\phi_1)_*(\bxi_1^{2k}\sigma \bxi_1)=\bxi_1^{2k}\sigma \bxi_1$. We know that $\sigma (\bxi_1^{2k}\sigma \bxi_1)=0$ in both the source and target. Therefore, the only possibility is that we add $\sigma$-cycles in either the source or target of the map. Since this does does not affect the map up to isomorphism of $E(\sigma)$-modules, we may assume $(\phi_1)_*(\bxi_1^{2k}\sigma \bxi_1)=\bxi_1^{2k}\sigma \bxi_1$. 

We also claim that $(\phi_1)_*(\bxi_1^{2k+1}\sigma \bxi_1)=\bxi_1^{2k+1}\sigma \bxi_1+\bxi_1^{2k}\bxi_2$. Again, we know $\sigma (\bxi_1^{2k+1}\sigma \bxi_1)=0$ in $H_*(\THH(y(1)))$ whereas $\bxi_1^{2k+1}\bxi_1=\bxi_1^{2k}\sigma \bxi_2$ in $H_*(\THH(H\f_2))$. Therefore, we must add a term~$y$ in B\"okstedt filtration zero such that $\sigma y= \bxi_1^{2k}\sigma \bxi_2$ and the only possibility is $\bxi_1^{2k} \bxi_2$. This completely determines the map up to isomorphism of $E(\sigma)$-modules.

We now describe the map as a map of $E(\sigma)$-modules in the category of $\ca^{\vee}$-comodules up to some indeterminacy. First, note that there are no $\sigma$-cycles in the degree of $\bxi_1^{2k+1}\sigma\bxi_1$ in lower B{\"o}kstedt filtration and thus we know the answer for $\bxi_1^{2k+1}\sigma \bxi_1$ completely as a map of $\ca^{\vee}$-comodules. This also forces the $\ca^{\vee}$-comodule structure on these elements. For example, since 
\[
	\begin{array}{rcl}
		\nu_{\infty}(\bxi_1^{2k+1}\sigma\bxi_1+\bxi_1^{2k}\bxi_2)&=&(\bxi_1\otimes 1+1\otimes \bxi_1)^{2k+1}(1\otimes \sigma \bxi_1)+ \\
		&&(\bxi_1^{2k}\otimes 1+1\otimes \bxi_1^{2k})(\bxi_2\otimes 1 + \bxi_1\otimes \bxi_1^2+1\otimes \bxi_2) \\
		&=& (\bxi_1\otimes 1+1\otimes \bxi_1)^{2k+1}(1\otimes \sigma \bxi_1)+ \bxi_1^{2k}\bxi_2\otimes 1+ \bxi_2\otimes \bxi_1^{2k} + \\
		&& \bxi_1^{2k+1}\otimes \bxi_1^2+\bxi_1\otimes \bxi_{1}^{2k+2}+ \bxi_1^{2k}\otimes \bxi_2+1\otimes \bxi_1^{2k}\bxi_2 ,\\
	\end{array}
\]
we know that 
\[  
\begin{array}{rcl}
\nu_1(\bxi_1^{2k+1}\sigma \bxi_1)&=&(\bxi_1\otimes 1+1\otimes \bxi_1)^{2k+1}(1\otimes \sigma \bxi_1)+ \bxi_1^{2k}\bxi_2\otimes 1+  \bxi_2\otimes \bxi_1^{2k} + \\
&&\bxi_1^{2k+1}\otimes \bxi_1^2+\bxi_1\otimes \bxi_{1}^{2k+2}+ \bxi_1^{2k}\otimes \bxi_2+1\otimes \bxi_1^{2k}\bxi_2  \,.
\end{array}
\]
In the case of $\bxi_1^{2k}\sigma \bxi_1$, adding $\sigma$-cycles of the form~$\bxi_1^{j}$ does not affect the comodule structure on the source up to a change of basis, since $\bxi_1^{j}$ is also in the target. However, if we add a $\sigma$-cycle in $H_*(\THH(H\f_2))$ that is not in the source, then this affects the comodule structure on the source. We therefore determine $\phi_*$ up to this indeterminacy. In summary, $\bxi_1^{2k}\sigma \bxi_1 $ maps to $\bxi_1^{2k}\sigma \bxi_1$ up to $\sigma$-cycles in $H_*(\THH(H\f_2))$ that are not in the image of $H_*(\THH(y(1)))$. In Proposition~\ref{Prop:ymapF}, we will describe $(\phi_n)_*$ up to the same type of indeterminacy.
\end{exm}

\begin{rem}
In fact, we can actually avoid indeterminacy in the previous example because $\bxi_1^{2k}\sigma \bxi_1$ is a $\sigma$-boundary. Since we know that $\bxi_1^{2k+1}$ maps to the element of the same name, we see that $\bxi_1^{2k}\sigma \bxi_k$ must map to the element of the same name without any indeterminacy. This argument no longer applies when studying $(\phi_n)_*$ for $n \geq 2$ since there will typically be additional elements in lower B{\"o}kstedt filtration. 
\end{rem}

We may choose a basis of $H_*(\THH(y(n)))$ so that $\sigma$ behaves as a derivation at the level of symbols, i.e. there is an equality up to lower B{\"o}kstedt filtration~$\sigma(xy) = \sigma(x)y + x\sigma(y)$ for $x,y \in H_*(y(n))$. Indeed, we may apply~\cite[Prop.~3.2]{MS93} to see that the class $\sigma(xy)$ is detected by $\sigma_*(xy)$ in the $\mathrm{E}_2$-page of the B{\"o}kstedt spectral sequence (where $\sigma_* \colon \thinspace H_*(y(n)) \to \HH_*(H_*(y(n)))$ is defined by $z \mapsto 1 \otimes z$). We have $\sigma_*(xy) = \sigma_*(x)y + x \sigma_*(y)$ since $\sigma_*$ is a derivation in Hochschild homology of the graded commutative ring~$H_*(y(n))$~\cite[p.~7]{MS93}, and~$\sigma_*(x)y + x\sigma_*(y)$ detects the element we call $\sigma(x)y + x\sigma(y)$ in $H_*(\THH(y(n)))$. 

The coaction on $\sigma \bxi_k$ is given by 
\[ 
\nu_n(\sigma \bxi_k)=(1\otimes\sigma ) \left ( \sum_{i+j=k} \bxi_i\otimes \bxi_j^{2^i} \right ) \,.
\]
Since $\sigma$ is a derivation (in the sense of Remark~\ref{Rmk:SigmaDerivation}), we see that the only term that is nontrivial in the formula for $\nu_n(\sigma \bxi_k)$ is $1\otimes \sigma \bxi_k$. 
We therefore conclude that $\sigma \bxi_k$ is a comodule primitive for all $k$. 

We now proceed to the main result of this section. We thank Vigleik Angeltveit for discussions which led to a simplification of the proof of Proposition~\ref{Prop:ymapF}; we use some notation from~\cite[Prop.~4.12]{Ang08}. We also note once and for all that elements in $H_*(\THH(y(n)))$ are only well-defined up to lower B\"okstedt filtration as in~\cite[Sec. 5]{Ang08}. 

\begin{defin}\label{compsigmacycles}
Let $\bfilt(x)$ be the B\"okstedt filtration of an element $x$. Define
\[ J_n = ( x \in H_*(\THH(H\f_2)) \setminus \im (\phi_n)_* : \bfilt(x)\le n \text{ and } \sigma(x)=0 )\]
to be the ideal generated by all~$\sigma$-cycles $x$ in~$H_*(\THH(H\f_2))$ in B\"okstedt filtration less than or equal to $n$ that are not in the image of $(\phi_n)_*$. We refer to these elements as the \emph{complementary~$\sigma$-cycles} in the proof of the following proposition. 
\end{defin}

\begin{prop}\label{Prop:ymapF}
Let $x_i=\sigma \bxi_i\sigma \bxi_{i+1}\ldots \sigma \bxi_n$. The map 
\[ 
(\phi_n)_*\co H_*(\THH(y(n)))\to H_*(\THH(H\f_2)) 
\]
is determined modulo intermediary filtration and complementary $\sigma$-cycles by the following:

\begin{enumerate}[(a)]

\item \label{item 1 Prop:ymapF} We may find a representative of $\bxi_i x_i$ so that
\[
 (\phi_n)_*(\bxi_ix_i) =\bxi_ix_i+\bxi_{n+1}
\]
modulo intermediary filtration and complementary $\sigma$-cycles. 

\item  \label{item 2 Prop:ymapF}  We may choose representatives for $y\in H_*(y(n))$ or $y\in E(\sigma \bxi_1,\sigma \bxi_2,\ldots ,\sigma \bxi_n)$ such that
\[
 (\phi_n)_*(y)=y \,.
\]
modulo intermediary filtration and complementary $\sigma$-cycles. That is, we can find representatives so that $f_0: H_*(\THH(y(n))) \to \ca^{\vee}$ sends $y$ to zero modulo complementary $\sigma$-cycles. 

\item  \label{item 3 Prop:ymapF}  For all remaining products in $H_*(\THH(y(n)))$, we may choose representatives so the map $(\phi_n)_*$ is multiplicative modulo intermediary filtration and complementary $\sigma$-cycles \,.

\end{enumerate}
\end{prop}

\begin{proof}
We begin with the proof of Item~\ref{item 1 Prop:ymapF}. We will proceed by downward induction on $i$, starting with the case $i=n$. Observe that in $H_*(\THH(H\f_2))$, we have $\sigma (\bxi_ix_i)=\sigma \bxi_{n+1}$ for all $i \leq n$. Since $\sigma \bxi_{n+1} \notin H_*(\THH(y(n)))$, we must have that $\bxi_n\sigma \bxi_n \in H_*(\THH(y(n)))$ maps to $\bxi_n\sigma \bxi_n+z \in H_*(\THH(H\f_2))$ with $z \neq 0$ some element in lower B\"okstedt filtration such that $\sigma (\bxi_ix_i+z)=0$. 
In particular, $\sigma (z)=\sigma \bxi_{n+1}$. The only element~$z \in H_*(\THH(H\f_2))$ in lower B\"okstedt filtration such that $\sigma z= \sigma \bxi_{n+1}$ is $\bxi_{n+1}$.  

Suppose now that Item~\ref{item 1 Prop:ymapF} holds for all $n \geq j > i$, i.e.~$\bxi_jx_j$ maps to $\bxi_jx_j+\bxi_{n+1}$ modulo intermediary filtration for all $j>i$. By the same argument as above, the class $\bxi_ix_i$ maps to $\bxi_ix_i +z$ where $z \neq 0$ is some element in lower filtration such that $\sigma(\bxi_i x_i + z) = 0$. Examining $H_*(\THH(H\f_2))$, we have that either $z =\bxi_{n+1}$ or 
\[ 
z\in \{\bxi_nx_n , \bxi_{n-1}x_{n-1}, \ldots, \bxi_{i+1}x_{i+1}\} 
\]
up to $\sigma$-cycles in $H_*(\THH(y(n)))$. If the former holds, then we are done. If the latter holds, then we can add $z$ to the source and we know that $\bxi_ix_i+z$ maps to $\bxi_ix_i+\bxi_{n+1}$, modulo intermediary filtration, by the inductive hypothesis. This completes the proof of \ref{item 1 Prop:ymapF}. 

We now turn to Item~\ref{item 2 Prop:ymapF}. For $y\in H_*(y(n))$ it is clear that $(\phi_n)_*(y)=y$ because all such $y$ are in B\"okstedt filtration zero. For $y\in E(\sigma \bxi_1,\sigma \bxi_2,\ldots ,\sigma \bxi_n)$, we know that after a possible change of basis, each of these elements~$y$ is a comodule primitive in $H_*(\THH(y(n)))$ and therefore each such $y$ maps to the element of the same name in $H_*(\THH(H\f_2))$. 

We now prove Item~\ref{item 3 Prop:ymapF}. By our naming conventions, a product~$x \cdot y \in H_*(\THH(y(n)))$ maps to $x \cdot y + z \in H_*(\THH(H\f_2))$, where $z$ is some (possibly trivial) element in lower B{\"o}kstedt filtration. Since we are only concerned with describing $z$ modulo intermediary filtration and complementary $\sigma$-cycles, it suffices to describe $f_0(x \cdot y) \in \ca^{\vee}$ modulo complementary $\sigma$-cycles. Since there is no indeterminacy in filtration zero, we have $f_0(x \cdot y) = f_0(x) \cdot f_0(y)$ modulo complementary $\sigma$-cycles. Thus, $(\phi_n)_*(x \cdot y) = x \cdot y$ modulo intermediary filtration and complementary $\sigma$-cycles, as claimed.
\end{proof}
We now note that $(\phi_n)_*$ is also a map of $E(\sigma)$-modules in $\ca^{\vee}$-comodules. Since $(\phi_n)_*$ is exotic in some cases, there is an exotic $\ca^{\vee}$-coaction on some elements in $H_*(\THH(y(n)))$.

\begin{cor}\label{cor:coaction}
Modulo intermediary filtration and $\sigma$-cycles in $H_{*}(\THH(y(n)))$, the $\ce^{\vee}$-coaction on $H_*(\THH(y(n)))$ is determined by the formula
\begin{equation}\label{exoticcoaction} 
\nu_n(\bxi_ix_i)= \sum_{j=0}^{i}\bxi_{j}\otimes \bxi_{i-j}^{2^j}x_i + \sum_{j=1}^{n+1} \bxi_j \otimes \bxi_{n+1-j}^{2^{n+1}}  
\end{equation}
for  $x_i=\sigma \bxi_i\sigma \bxi_{i+1}\ldots \sigma \bxi_n$ and $1\le i\le n$, the usual coaction on the $\ce^{\vee}$-comodule $H_*(y(n))$, and primitivity of the coaction on the $\ce^{\vee}$-comodule $E(\sigma \bxi_1,\ldots ,\sigma \bxi_n)$. 
\end{cor}

\begin{proof}
By Proposition~\ref{Prop:ymapF}, this follows from the commutative diagram 
\[
\begin{tikzcd}
H_*(\THH(y(n))) \arrow{r}{\nu_{n}} \arrow{d}{(\phi_{n})_{*}} & \ca^{\vee} \otimes H_*(\THH(y(n)))  \arrow{d}{\id \otimes (\phi_{n})_{*}}\\
H_*(\THH(\f_{2}))\arrow{r}{\nu_{\infty}}  &  \ca^{\vee} \otimes H_*(\THH(\f_{2}))
\end{tikzcd}
\] 
and the coaction of the dual Steenrod algebra on $H_*(\THH(\f_{2}))$, see~\cite[Thm.~5.12(3)]{AR05}. 
The fact that elements in $E(\sigma \bxi_1,\ldots ,\sigma \bxi_n)$ are primitive as $\ce^{\vee}$-comodules follows because $E(\sigma \bxi_1,\ldots ,\sigma \bxi_n)$ is concentrated in even degrees. 
\end{proof}

\section{Continuous mod $2$ homology of $\TP(y(n))$}\label{Sec:TP}
In many classical trace methods computations, topological periodic cyclic homology is understood using the homotopical Tate spectral sequence described by Greenlees--May~\cite{GM95}. In Section~\ref{Sec:Limitations}, we explain why this method of understanding $\TP(R)$ is not tractable when $R=y(n)$ for $n<\infty$. 
In Section~\ref{homTate}, we apply an alternative approach to understanding $\TP(R)$ inspired by foundational work of Bruner--Rognes~\cite{BR05}, the \emph{homological} Tate spectral sequence. We analyze this spectral sequence to compute the \emph{continuous} homology $H^c_*(\TP(y(n)))$ in Proposition~\ref{tatebasis}. 

\subsection{Limitations of the homotopical Tate spectral sequence}\label{Sec:Limitations}
Let $R$ be an $\mathbb{E}_1$~ring spectrum. The topological periodic cyclic homology spectrum~$\TP(R)$ arises in many classical trace methods computations, and it is now part of the definition of topological cyclic homology as the fiber 
\[ 
\mathrm{TC}(R)_{2} \to  \mathrm{TC}^{-}(R)_{2} \overset{\can-\varphi}{\longrightarrow} \TP(R)_{2}
\]
after work of~\cite{NS18}. For example, when $p$ is an odd prime, the spectrum~$\TP(H\f_p)$ appears in Hesselholt and Madsen's computation of the algebraic K-theory of finite algebras over the Witt vectors of perfect fields~\cite{HM97}. 
Similarly, it plays an important role in the computation of $\TC(\z_2)/2$ by Rognes~\cite{Rog99}. In both cases, they analyze the mod $p$ homotopical Tate spectral sequence
\[\widehat{E}^2 = \widehat{H}^{-*}(\bT; \pi_*(\THH(R)/p)) \Longrightarrow \pi_*(\TP(R)/p)\]
defined in~\cite{GM95}. We will review the filtration used to define this spectral sequence when we define the homological Tate spectral sequence in Subsection~\ref{homTate}.

When $R=y(\infty) = H\f_2$, this spectral sequence is fairly simple. By B\"okstedt periodicity~\cite{BokZ}, there is an isomorphism $\pi_*(\THH(H\f_2))\cong P(u)$ with $|u|=2$, so one has a familiar checkerboard pattern on the $\mathrm{E}^2$-page and the spectral sequence collapses. On the other hand, when $R = y(n)$ for $n < \infty$, this spectral sequence appears to be intractable.

\begin{exm} 
We have $y(0) = \mathbb{S}$ and $\THH(\mathbb{S}) \simeq \mathbb{S}$ as $\mathbb{T}$-spectra. There is an equivalence of spectra 
\[ 
\TP(\mathbb{S}) \simeq \Sigma^2\mathbb{C}P^{\infty}_{-\infty}
 \]
by~\cite[Thm. 16.1]{GM95}. The homotopy groups of $\mathbb{C}P^{\infty}_{-\infty}$ are less well understood than the stable homotopy groups of spheres (cf.~\cite{Rog02}).
\end{exm}

Moreover,~\cite[Thm. 1]{BCS10} implies that
\[ 
\THH(y(n))\simeq \Th(L^{\eta}(Bf)) 
\]
where $f:\Omega J_{2^n-1}(S^2)\to B\GL_1\mathbb{S}$ is the map defining $y(n)$ as $\Th(f)=y(n)$ and~$\Th(L^{\eta}(Bf))$ is the Thom spectrum of the composite map~$L^{\eta}(Bf)$ defined as
\[ 
LB\Omega J_{2^n-1}(S^2)\overset{L(Bf)}{\longrightarrow} LB^2F\simeq B\GL_1\mathbb{S}\times B^2\GL_1\mathbb{S}\overset{B\GL_1\mathbb{S}\times \eta }{\longrightarrow} B\GL_1\mathbb{S} \times B\GL_1\mathbb{S} \to B\GL_1\mathbb{S} \,.
\]
This spectrum has homotopy groups at least as complicated as $\pi_*(y(n))$, which are only known in a finite range. Since we want to understand large-scale phenomena in these homotopy groups, we will adopt a different approach.

\subsection{Homological Tate spectral sequence for $\THH(y(n))$}\label{homTate}
In notes from a talk by Rognes~\cite{Rog11}, it is shown using the homological homotopy fixed point spectral sequence and the inverse limit Adams spectral sequence~\cite{LDMA80} that there is an isomorphism of graded abelian groups
\[ 
\pi_*(\TC^{-}(H\f_2)) \cong \prod_{i \in \z} \Sigma^{2i} \z_2 \,.
\]
We recall this calculation in Proposition~\ref{Prop:Rog}. A similar argument shows that there is an isomorphism of graded abelian groups
\[ 
\pi_*(\TP(H\f_2)) \cong \prod_{i \in \z} \Sigma^{2i} \z_2 \,.
\]

\begin{defin}[Homological Tate spectral sequence \cite{BR05}]\label{Greenleesfilt}
Let $R$ be an $\mathbb{E}_1$~ring spectrum. The \emph{homological Tate spectral sequence} has the form 
\[
\widehat{\mathrm{E}}^{2}_{*,*}= \widehat{H}^{-*}(\bT;H_*(\THH(R))) \Longrightarrow H^c_*(\TP(R)) \,.
\]
It arises from the Greenlees filtration of $\TP(R) = \THH(R)^{t\bT} = [F(E{\bT}_+,\THH(R)) \wedge \widetilde{E\bT}]^{\bT}$ defined by setting (cf.~\cite[Sec.~2]{BR05})
\[
\TP(R)[i] := [F(E\bT_+,\THH(R)) \wedge \widetilde{E\bT}/\widetilde{E\bT}_i ]^{\bT}
\]
where $\widetilde{E\bT}_i$ is the cofiber of the map~$E\bT^{(i)}_+\to S^0$, where $E\bT^{(i)}$ is the $i$-th skeleton of $E\bT$, if $i\ge 0$ and~$\widetilde{E\bT}_i$ is the Spanier--Whitehead dual of $\widetilde{E\bT}_{-i-1}$ if $i<0$~\cite[p.~437]{Gre87}.
The limit
\[
H^c_*(\TP(R)) := \underset{i}{\lim \thinspace}H_*(\TP(R)[i])
\]
is called the \emph{continuous homology} of $\TP(R)$. For $0 \leq n \leq \infty$, we will denote the $\mathrm{E}^r$-page of the homological Tate spectral sequence converging to $H^c_*(\TP(y(n)))$ by $\widehat{\mathrm{E}}^{r}(n)$. Note that there is also an eventually
constant filtration of $\TP(R)[i]$ defined by 
\[ \mathrm{fil}_{\mathrm{Gre}}^{j}\TP(R)[i]=\begin{cases}
										\TP(R)[i] \text{ if } j< i\\ 	
										\TP(R)[j] \text{ if } j\ge i\\ 
\end{cases}
\]
whose associated spectral sequence we call the \emph{approximate homological Tate spectral sequence}. We write $\mathrm{Fil}_{\mathrm{Gre}}^jH_*(\TP(R)[i]))=H_*(\mathrm{fil}_{\mathrm{Gre}}^j\TP(R)[i])$ for the associated filtration of $H_*(\TP(R)[i])$ and we write 
\[
\Gr_{\mathrm{Gre}}^jH_*(\TP(R)[i])=\mathrm{Fil}_{\mathrm{Gre}}^jH_*(\TP(R)[i]))/\mathrm{Fil}_{\mathrm{Gre}}^{j+1}H_*(\TP(R)[i]))\,.
\] 
\end{defin}

\begin{lem}\label{Lem:E2Hat}
There is an additive isomorphism
\[
\widehat{\mathrm{E}}^{2}(n) \cong P(t,t^{-1}) \otimes H_*(\THH(y(n))) \cong P(t,t^{-1}) \otimes P(\bxi_1,\bxi_2,\ldots,\bxi_n) \otimes E(\sigma \bxi_1,\ldots, \sigma \bxi_n)
\]
where $|t| = (-2,0)$, $|\bxi_i| = (0,2^i-1)$, and $|\sigma \bxi_i| = (0,2^i)$. 
\end{lem}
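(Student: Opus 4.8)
The plan is to identify the $\widehat{E}^2$-page of the homological Tate spectral sequence as the Tate cohomology of $\bT$ with coefficients in the graded $\bT$-equivariant homology of $THH(y(n))$, and then to observe that this coefficient module is, for homological purposes, a trivial $\bT$-module. First I would recall the general shape of the homological Tate spectral sequence from Definition \ref{Greenleesfilt}: it arises from the Greenlees filtration, whose associated graded pieces are (up to suspension) copies of $H_*(THH(y(n)))$ smashed with cells of $E\bT$, so that the $\widehat{E}^2$-term is
\[
\widehat{E}^2(n) = \widehat{H}^{-*}(\bT; H_*(THH(y(n)))).
\]
Here the $\bT$-action on $H_*(THH(y(n)))$ is the action induced by the canonical circle action on $THH(y(n))$; since $\bT$ is connected, this action on mod $2$ homology is trivial (the action map $\bT_+ \wedge THH(y(n)) \to THH(y(n))$ induces, on $H_*$, a map that on the $H_0(\bT)$-summand is the identity and the $H_1(\bT)$-summand contributes the operator $\sigma$, which raises degree and hence does not affect the module structure as a graded $H_*(\bT)$-comodule in the way relevant to computing Tate cohomology of the trivial action). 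Thus the coefficients are a trivial $\mathbb{F}_2[\bT]$-module, or more precisely the homology is a trivial module over $H_*(B\bT) \cong P(t)$ with $|t| = 2$ in homological grading, equivalently $|t| = (-2,0)$ in the $(\text{filtration}, \text{internal})$ bigrading used for the spectral sequence.

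Next I would compute the Tate cohomology. For the circle group $\bT$ with trivial coefficients in an $\mathbb{F}_2$-vector space $V$, one has $\widehat{H}^*(\bT; V) \cong P(t, t^{-1}) \otimes V$ with $t$ in cohomological degree $2$; this is the standard computation of the Tate construction for $\bT$ (it is where the "checkerboard" pattern and $2$-periodicity come from, exactly as in the $H\mathbb{F}_2$ case recalled in Section \ref{Sec:Limitations}). Applying this with $V = H_*(THH(y(n)))$ gives
\[
\widehat{E}^2(n) \cong P(t,t^{-1}) \otimes H_*(THH(y(n))).
\]
Then I would simply substitute the computation of $H_*(THH(y(n)))$ from Proposition \ref{Prop:HTHH}, namely $H_*(THH(y(n))) \cong P(\bxi_1,\ldots,\bxi_n) \otimes E(\sigma\bxi_1,\ldots,\sigma\bxi_n)$, to obtain the displayed formula. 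The bidegrees are read off directly: $t$ sits in Tate filtration degree $-2$ and internal degree $0$; the class $\bxi_i$ has $|\bxi_i| = 2^i - 1$ and the class $\sigma\bxi_i = \sigma(\bxi_i)$ has degree $2^i$, each with Tate filtration $0$ since they come from the $\pi_0 E\bT$-cell.

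The main obstacle here is really just bookkeeping rather than genuine difficulty: one must be careful that the statement is only an \emph{additive} isomorphism, because $THH(y(n))$ carries no ring structure (as $y(n)$ is not $E_2$, cf. the Remark at the start of Section \ref{Sec:THH}), so there is no multiplicative structure on the $\widehat{E}^2$-page to worry about, and the identification is as bigraded $\mathbb{F}_2$-vector spaces (a fortiori as $P(t,t^{-1})$-modules). The one point deserving a word of care is confirming that the $\bT$-action on $H_*(THH(y(n)))$ is homologically trivial so that the Tate cohomology really is just $P(t,t^{-1}) \otimes H_*(THH(y(n)))$ and not some twisted version; this follows because $\bT$ is path-connected, so it acts trivially on the homology of any $\bT$-space or $\bT$-spectrum. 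With that in hand the lemma is immediate, and it sets up the bigraded $E^2$-page on which the differentials of Section \ref{Sec:TP} will be analyzed.
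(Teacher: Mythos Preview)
Your proposal is correct. The paper states this lemma without proof, treating it as immediate from the definition of the homological Tate spectral sequence (Definition \ref{Greenleesfilt}) together with Proposition \ref{Prop:HTHH}; your argument supplies exactly the standard justification the paper omits, namely that connectedness of $\bT$ forces the action on $H_*(THH(y(n)))$ to be trivial so that $\widehat{H}^{-*}(\bT;-)$ reduces to $P(t,t^{-1}) \otimes (-)$.
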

In~\cite[Prop.~3.2]{BR05}, Bruner and Rognes show that $d^2(x) = t \cdot \sigma(x)$ in the homological Tate spectral sequence. Therefore, in order to compute $\widehat{\mathrm{E}}^3(n)$, we need to understand the $\bT$-action on $H_*(\THH(y(n)))$. This can be understood using Proposition~\ref{Prop:ymapF} and the relation~$(\phi_n)_*(\sigma(x)) = \sigma((\phi_n)_*(x))$ which follows from naturality of $\sigma$. 

\begin{defin}\cite[Prop.~6.1.(a)]{BR05}
Let $k \geq 1$. Define $\bxi_{k+1}' \in H_*(\THH(H\f_2))$ by 
\[
\bxi_{k+1}' := \bxi_{k+1} + \bxi_k \sigma \bxi_k \,.
\]
\end{defin}

\begin{prop}\label{tatebasis}
There is an isomorphism of graded $\f_2$-vector spaces 
\[
H^c_*(\TP(y(n))) \cong P(t,t^{-1}) \otimes P(\bxi^2_1,\bxi_2',\ldots,\bxi_{n}') \otimes E(\bxi_n \sigma \bxi_n)
\]
with $|t| = (-2,0)$, $|\bxi_i| = (0,2^i-1)$, and $|\sigma \bxi_i| = (0,2^i)$.
\end{prop}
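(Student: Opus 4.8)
The plan is to run the homological Tate spectral sequence $\widehat{E}^r(n) \Rightarrow H^c_*(TP(y(n)))$ starting from the identification of $\widehat{E}^2(n) \cong P(t,t^{-1}) \otimes H_*(THH(y(n)))$ in Lemma \ref{Lem:E2Hat}, compute the $d^2$-differential using the Bruner--Rognes formula $d^2(x) = t\cdot\sigma(x)$, and then argue that the spectral sequence degenerates at $\widehat{E}^3(n)$. The key input is that $\sigma$ acts symbolically as a derivation on $H_*(THH(y(n))) \cong P(\bxi_1,\ldots,\bxi_n)\otimes E(\sigma\bxi_1,\ldots,\sigma\bxi_n)$, together with the knowledge, imported from the $H\f_2$ case via Proposition \ref{Prop:ymapF} and naturality of $\sigma$, of what $\sigma$ does to each generator. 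Concretely, in $H_*(THH(H\f_2))$ one has $\sigma(\bxi_k) = 0$ (since $\bxi_k$ lifts from $y(k)$ where it is a $\sigma$-cycle, or more simply because $\sigma\bxi_k$ is an exterior generator and $\sigma^2 = 0$ symbolically), while $\sigma(\bxi_k\sigma\bxi_k) = \sigma\bxi_{k+1}$; pulling back along $(\phi_n)_*$ and using the exotic formula \eqref{phinex}, one reads off the $\sigma$-action on $H_*(THH(y(n)))$. The upshot is that the $d^2$-differential kills $\bxi_k\sigma\bxi_k \mapsto t\,\sigma\bxi_{k+1}$ for $1\le k\le n-1$, as well as $\bxi_1 \mapsto t\,\sigma\bxi_1$ (using $\sigma\bxi_1 = \bxi_1^2 \cdot (\text{something})$ — more precisely $d^2(\bxi_1)$ should be checked directly, as $\bxi_1$ has odd degree and $\sigma\bxi_1$ sits in the image).

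First I would set up the $d^2$-differential carefully. Write $H_*(THH(y(n)))$ with the basis from Proposition \ref{Prop:HTHH}. The derivation property of $\sigma$ (symbolically) means $d^2$ is determined by its values on $\bxi_i$ and $\sigma\bxi_i$; one has $d^2(\sigma\bxi_i) = t\cdot\sigma(\sigma\bxi_i) = 0$ and $d^2(\bxi_i) = t\cdot\sigma(\bxi_i)$. Since $\sigma\bxi_i \ne 0$ only as an independent exterior class, we have $\sigma(\bxi_i) = 0$ for each $i$, so naively $d^2 = 0$ on generators — but this is where the interplay with the $H\f_2$-computation enters: the classes $\bxi_k\sigma\bxi_k$ are \emph{not} simply products in the $d^2$-structure, because in $H_*(THH(H\f_2))$ Bruner--Rognes show $d^2(\bxi_k\sigma\bxi_k) = t\,\sigma\bxi_{k+1}$, hence $d^2(\bxi'_{k+1}) = d^2(\bxi_{k+1} + \bxi_k\sigma\bxi_k) = t\,\sigma\bxi_{k+1}$. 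Transporting this to $y(n)$ via $(\phi_n)_*$: for $1\le k\le n-1$ the class $\bxi_k\sigma\bxi_k$ lives in $H_*(THH(y(n)))$ and maps compatibly, so $d^2(\bxi_k\sigma\bxi_k) = t\,\sigma\bxi_{k+1}$ holds in $\widehat{E}^2(n)$ as well. This leaves $\bxi_1^2$, $\bxi'_2,\ldots,\bxi'_n$ (the $d^2$-cycles that are not $d^2$-boundaries), the exterior class $\bxi_n\sigma\bxi_n$ (which survives: $\sigma\bxi_{n+1}\notin H_*(THH(y(n)))$ so it is a permanent $d^2$-cycle with no target), and the Tate class $t^{\pm1}$. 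Computing $\widehat{E}^3(n)$ as the homology of $(\widehat{E}^2(n), d^2)$, using the Künneth decomposition of $P(\bxi_1,\ldots,\bxi_n)\otimes E(\sigma\bxi_1,\ldots,\sigma\bxi_n)$ into tensor factors $\{P(\bxi_1)\otimes E(\sigma\bxi_1)\}$ (with $d^2\bxi_1^2 = 0$, $d^2(\bxi_1\sigma\bxi_1) = t\,\sigma\bxi_2$ — wait, this couples factors) requires care, so I would instead decompose directly: the pairs $(\bxi_k\sigma\bxi_k,\ \sigma\bxi_{k+1})$ for $1\le k\le n-1$ form acyclic two-step complexes, $\sigma\bxi_1$ is hit (by $d^2$ of a suitable class, or else survives — I need to track $d^2(\bxi_1)$, and since $\bxi_1\in H_1$ is odd it cannot be a $d^2$-boundary and $\sigma\bxi_1$ must be hit by something; the natural candidate is that $d^2$ of $\bxi_1$-related classes or the structure forces $\sigma\bxi_1$ to die), leaving exactly $P(t,t^{-1})\otimes P(\bxi_1^2,\bxi'_2,\ldots,\bxi'_n)\otimes E(\bxi_n\sigma\bxi_n)$.

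Next I would argue $\widehat{E}^3(n) = \widehat{E}^\infty(n)$. By inspection of bidegrees, every surviving class is a product of $t^{\pm1}$ (bidegree $(\mp2,0)$), the polynomial generators $\bxi_1^2, \bxi'_k$ (filtration $0$, various stems), and $\bxi_n\sigma\bxi_n$ (filtration $0$). A $d^r$ for $r\ge 3$ changes filtration by $r\ge 3$, hence must land in negative filtration, but $\widehat{E}^3(n)$ is concentrated in filtration $0$ (all classes have the $t$-adic part accounting for the column and the $H_*(THH)$-part in filtration $0$) — more precisely, since the $E^2$-page only had classes in filtration $0$ from $H_*(THH(y(n)))$ tensored with the single-variable Laurent band $P(t,t^{-1})$, and $d^r$ for $r\ge3$ would need a source and target differing by filtration $\ge 3$ within a structure that is "$2$-periodic" in filtration via $t$, a standard sparseness/multiplicativity argument (or comparison with the collapsing $H\f_2$ Tate spectral sequence, into which $\widehat{E}^\bullet(n)$ maps compatibly through $(\phi_n)_*$ after the $d^2$) forces all higher differentials to vanish. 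Finally, since the associated graded is a free module over $P(t,t^{-1})$ on the listed generators and there is no room for additive extensions across the filtration (the filtration is by powers of $t$ and the $E^\infty$-page is already a free $P(t,t^{-1})$-module), we conclude the stated isomorphism of graded $\f_2$-vector spaces.

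The main obstacle I anticipate is pinning down the exact $d^2$-behavior of the low classes — specifically confirming that $\sigma\bxi_1$ is a $d^2$-boundary (so that it does not survive, which is essential for the answer to come out as $P(\bxi_1^2,\ldots)$ rather than $P(\bxi_1,\ldots)$) and that no unexpected exotic $d^2$'s appear coming from the non-multiplicativity of $H_*(THH(y(n)))$ and the exotic terms in $(\phi_n)_*$ recorded in Proposition \ref{Prop:ymapF}. The exotic coaction/differential terms $\bxi_{n+1}$ appearing in $(\phi_n)_*(\bxi_i x_i)$ must be checked not to interfere: since $\bxi_{n+1}\notin H_*(THH(y(n)))$, these corrections live only in the target $H\f_2$-spectral sequence and do not produce extra differentials upstairs, but making this rigorous requires carefully using that $(\phi_n)_*$ is a map of spectral sequences and that $\widehat{E}^2(n)\to\widehat{E}^2(\infty)$ is injective in the relevant range. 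A secondary subtlety is the convergence/continuity bookkeeping: one must confirm the homological Tate spectral sequence converges to $H^c_*(TP(y(n))) = \lim_i H_*(TP(y(n))[i])$ with no $\lim^1$-obstruction, which follows from the explicit finite-type, bounded-below structure of each $H_*(TP(y(n))[i])$ and the Mittag-Leffler condition, exactly as in \cite{BR05}.
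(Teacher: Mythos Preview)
Your overall strategy---compute $\widehat{E}^3(n)$ from the Bruner--Rognes formula $d^2(x)=t\cdot\sigma(x)$, then use the injection into the $H\f_2$ spectral sequence to force collapse at $\widehat{E}^3$---is exactly the paper's approach. However, your execution of the $d^2$-page contains a genuine error that inverts the roles of the key classes.

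The mistake is the claim that $\sigma(\bxi_i)=0$. In fact $\sigma(\bxi_i)=\sigma\bxi_i$, the exterior generator itself; this is precisely how the class $\sigma\bxi_i$ arises in the B{\"o}kstedt spectral sequence. Hence the basic differentials are $d^2(\bxi_i)=t\,\sigma\bxi_i\neq 0$, and since $t$ is invertible this kills every $\sigma\bxi_i$ at $E^3$ and leaves only $\bxi_i^2$ from $P(\bxi_i)$. This immediately resolves your worry about whether $\sigma\bxi_1$ is a boundary: it is hit by $t^{-1}\bxi_1$. Conversely, in $H_*(THH(y(n)))$ one has $(\sigma\bxi_k)^2=0$ (exterior), so $\sigma(\bxi_k\sigma\bxi_k)=(\sigma\bxi_k)^2=0$ and $\bxi_k\sigma\bxi_k$ is a $d^2$-\emph{cycle}, not a source of $t\,\sigma\bxi_{k+1}$ as you assert. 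The relation $\sigma(\bxi_k\sigma\bxi_k)=\sigma\bxi_{k+1}$ holds only in $H_*(THH(H\f_2))$, where $(\sigma\bxi_1)^{2^{k}}=\sigma\bxi_{k+1}$; you cannot transport it to $y(n)$. With the correct $\sigma$-action, the $d^2$-homology of $H_*(THH(y(n)))$ is the de~Rham cohomology of $P(\bxi_1,\ldots,\bxi_n)$ over $\f_2$, namely $P(\bxi_1^2,\ldots,\bxi_n^2)\otimes E(\bxi_1\sigma\bxi_1,\ldots,\bxi_n\sigma\bxi_n)$, and the paper then renames the surviving odd classes using the $\bxi_k'$ convention (compatible with their images under $(\phi_n)_*$). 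Your sparseness argument for $\widehat{E}^3=\widehat{E}^\infty$ is also not quite right, since $\widehat{E}^3(n)$ is not concentrated in a single filtration; the paper instead uses that $(\phi_n)_*$ induces an injection $\widehat{E}^3(n)\hookrightarrow\widehat{E}^3(\infty)$ (via Proposition~\ref{Prop:ymapF}) and that the target collapses by \cite[Thm.~5.1]{BR05}.
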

\begin{proof}
First, note that $\widehat{\mathrm{E}}^2_{**}(n)$ was computed in Lemma~\ref{Lem:E2Hat}. 
The homological Tate spectral sequence is not a multiplicative spectral sequence since $\THH(y(n))$ is not a ring spectrum, but it is a module over the spectral sequence for the sphere~$\{\widehat{\mathrm{E}}_{*,*}^r(0)\}_r$. Consequently, the equality $d^r(t)=0$ holds and the differentials are $t$-linear. We have differentials $d^2(\bxi_k) = t \sigma\bxi_k$ and thus $d^2(t^m \bxi_k) = t^{m+1} \sigma\bxi_k$ for $m \in \z$ by $t$-linearity.

Recall that $x_i=\sigma\bxi_i\cdots\sigma\bxi_n$. Any class of the form $y \bxi_i x_i$, where $y$ is a $\sigma$-cycle, is a $d^2$-cycle in the homological Tate spectral sequence converging to $H^c_*(\TP(y(n)))$. Many of these classes are also $d^2$-homologous; in particular, 
\[
d^2(y \bxi_i \bxi_n \sigma \bxi_i\cdots \sigma \bxi_{n-1}) = ty \bxi_i x_i + ty \bxi_n x_n \,.
\]
 
Using these relations and the fact that this spectral sequence is a module over the spectral sequence for the sphere, we obtain an additive isomorphism (cf.~\cite[Prop.~6.1]{BR05})
\[
\widehat{\mathrm{E}}^3_{*,*}(n) \cong P(t,t^{-1}) \otimes P(\bxi^2_1, \bxi_2', \bxi_3', \ldots) \otimes E(\bxi_n \sigma \bxi_n) \,.
\]

To see that there are no further differentials, we use the map of spectral sequences induced by the $\bT$-equivariant map~$\THH(y(n)) \to \THH(H\f_2)$. 
The homological Tate spectral sequence converging to $H^c_*(\TP(H\f_2))$ has $\widehat{\mathrm{E}}^3$-page 
\[
\widehat{\mathrm{E}}^3_{*,*}(\infty) \cong P(t,t^{-1})\otimes P(\bxi^2_1,\bxi_2', \bxi_3', \ldots) \,.
\]
All of the generators are permanent cycles by \cite[Thm.~5.1]{BR05}, so there are no further differentials. The map $\widehat{\mathrm{E}}_{*,*}^3(n)\to \widehat{\mathrm{E}}_{*,*}^3(\infty)$ is injective by Proposition~\ref{Prop:ymapF} so we can conclude that there is also an isomorphism~$\widehat{\mathrm{E}}^3(n) \cong  \widehat{\mathrm{E}}^\infty(n)$. 
\end{proof}

A similar proof can be used to compute the homology of the spectra~$\TP(y(n))[i]$ which were used to define the filtration of $\TP(y(n))$ giving rise to the homological Tate spectral sequence. Indeed, one may truncate the homological Tate spectral sequence to obtain a spectral sequence which converges strongly to $H_*(\TP(y(n))[i])$. 

\begin{notation}\label{notation L(i)}
When computing the truncated homological Tate spectral sequence or the truncated homological homotopy fixed point spectral sequence, we denote the left-most column (using Serre grading) by
\[ 
\mathrm{L}(i):= \left ( H_*(\THH(y(n))/\im (d^{2i-2,*}_2 ) \right )\{t^i\} 
\] 
where the integer~$n$ is understood from the context.
\end{notation}

\begin{cor}
There is an isomorphism of graded~$\f_2$-vector spaces
\begin{align*}
H_*(\TP(y(n))[i]) \cong & \left[P(t^{-1})\{t^{i-1}\} \otimes P(\bxi^2_1, \bxi_2', \ldots, \bxi_n') \otimes E(\bxi_n \sigma \bxi_n) \right] \oplus \mathrm{L}(i) 
\end{align*}
with $|t| = (-2,0)$, $|\bxi_i| = (0,2^i-1)$, $|\sigma \bxi_i| = (0,2^i)$, and $P(t^{-1})\{t^{i-1}\}$ is viewed as a $P(t^{-1})$-submodule of $P(t,t^{-1})$. 
\end{cor}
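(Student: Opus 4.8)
The plan is to run the same argument as in the proof of Proposition~\ref{tatebasis}, but with the homological Tate spectral sequence replaced by the truncated spectral sequence coming from the truncated Greenlees filtration, which (as noted just above the corollary) converges strongly to $H_*(TP(y(n))[i])$. First I would identify its $E^2$-page: truncating at stage $i$ discards all columns $t^j$ with $j > i$ from $\widehat{E}^2(n) \cong P(t,t^{-1}) \otimes H_*(THH(y(n)))$ (Lemma~\ref{Lem:E2Hat}), so that the truncated $E^2$-page is the sub-$P(t^{-1})$-module generated by $t^i$, with the $t^i$-column sitting at the edge of the spectral sequence.

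Next I would run the $d^2$-differential, which by \cite[Prop.~3.2]{BR05} is $d^2(x) = t\,\sigma(x)$ and hence $d^2(t^m x) = t^{m+1}\sigma(x)$ by $t$-linearity, exactly as in Proposition~\ref{tatebasis}. On each interior column $t^j$ with $j \le i-1$, both the incoming differential (from $t^{j-1}$) and the outgoing differential (to $t^{j+1}$) remain inside the truncation, so the homology of these columns is computed verbatim as in the proof of Proposition~\ref{tatebasis}, using the action of $\sigma$ on $H_*(THH(y(n)))$, the $\sigma$-cycle/$\sigma$-boundary bookkeeping from Proposition~\ref{Prop:ymapF}, and comparison with the Tate spectral sequence for $H\f_2$; this yields the summand $P(t^{-1})\{t^{i-1}\} \otimes P(\bxi_1^2,\bxi_2',\ldots,\bxi_n') \otimes E(\bxi_n\sigma\bxi_n)$. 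On the edge column $t^i$ there is no outgoing $d^2$, because its target $t^{i+1}$ has been removed, so the $d^2$-homology of this column is the cokernel of the incoming differential $d^2\colon t^{i-1}H_*(THH(y(n))) \to t^i H_*(THH(y(n)))$, which is precisely the module $V(i)$ of Notation~\ref{notation V(i)}.

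Finally I would check that the truncated spectral sequence degenerates at $\widehat{E}^3$. The comparison map induced by $TP(y(n)) \to TP(y(n))[i]$ is an isomorphism on interior columns at $E^3$, and the untruncated spectral sequence degenerates there by Proposition~\ref{tatebasis} (and \cite[Thm.~5.1]{BR05}), so there are no higher differentials among the interior columns and none landing in the edge column; higher differentials out of the edge column would target the deleted columns $t^j$ with $j > i$. Hence $V(i)$ consists of permanent cycles, and assembling the interior columns and the edge column gives the displayed additive splitting, with no extension problems since everything in sight is an $\f_2$-vector space.

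I expect the only genuine subtlety to be the bookkeeping at the edge of the truncated spectral sequence — confirming that truncating the Greenlees filtration at stage~$i$ really replaces the $t^i$-column by the cokernel $V(i)$ (rather than by a kernel or a degree-shifted variant), and that no higher differential interacts with it — while the interior of the computation is a direct transcription of Proposition~\ref{tatebasis}.
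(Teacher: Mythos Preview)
Your proposal is correct and follows essentially the same approach as the paper, which simply remarks that one truncates the homological Tate spectral sequence and repeats the argument of Proposition~\ref{tatebasis}. Your account in fact supplies more detail than the paper does, including the edge-column bookkeeping that produces $V(i)$ and the degeneration argument.
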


If $X = \lim_i X_i$ is the homotopy limit of bounded below spectra~$X_i$ of finite type, then the inverse limit Adams spectral sequence
\[
\mathrm{E}_2^{*,*} = \Ext^{*,*}_{\ca^{\vee}}(\f_2,H^c_*(X)) \Longrightarrow \pi_*(X)
\]
arises from the filtration of $X$ obtained by taking the inverse limit of compatible Adams filtrations of the spectra $X_i$, where the left-hand side is computed using the continuous $\ca^{\vee}$-coaction on $H^c_*(X)$. For details, see~\cite[Sec.~2]{LNR12}. Taking $X = \TP(y(n))$ gives a method for calculating $\pi_*(\TP(y(n)))$. In view of Rognes' computation of $\pi_*(\TC^{-}(H\f_2))$ \cite{Rog11}, one might suspect that the inverse limit Adams spectral sequence could be used to compute the homotopy groups~$\pi_*(\TP(y(n)))$ directly.  
However, this approach is significantly less tractable for $n < \infty$ since $\ca^{\vee}$ coacts nontrivially on $P(t,t^{-1}) \subset H^c_*(\TP(y(n)))$. ~This problem is avoided when $n=\infty$ as follows. There is an $\ca^{\vee}$-comodule isomorphism
\[
H^c_*(\TP(H\f_2)) \cong P(t,t^{-1}) \otimes H_*(H\z_2) \cong P(t,t^{-1}) \otimes (\ca//E(0))_* \,.
\]
A change-of-rings isomorphism then gives
\[
\mathrm{E}_2^{*,*} \cong \Ext_{E(\bxi_1)}^{*,*}(\f_2,P(t,t^{-1})) \,.
\]
Since $\bxi_1$ is in an odd degree and $P(t,t^{-1})$ is concentrated in even degrees, the $E(\bxi_1)$-coaction on $P(t,t^{-1})$ is trivial. 
Therefore
\[
\mathrm{E}_2^{*,*} \cong P(t,t^{-1}) \otimes \Ext_{E(\bxi_1)}^{*,*}(\f_2,\f_2)
\]
and the spectral sequence collapses for degree reasons. 
The key simplification in the next section is that we can replace the functor~$\Ext_{\ca^{\vee}}^{*,*}(\f_2,-)$ by the functor~$\Ext_{E(\bxi_n)}^{*,*}(\f_2,-)$ if we compute connective Morava K-theory instead of stable homotopy because of the change of rings isomorphism.

\subsection{Interpretation in terms of $z(n)/v_{n}$}\label{Sec:lilASS}
To determine the chromatic complexity of $\TP(y(n))$, we need to compute $K(m)_*(\TP(y(n)))$. We can identify a piece of the continuous homology~$H^c_*(\TP(y(n)))$ computed in Proposition~\ref{tatebasis} with the homology~$H_*(z(n)/v_n)$. 
Our identification may not hold as $\ca^{\vee}$-comodules due to the possibility of complementary $\sigma$-cycles in the image of $(\phi_n)_*$, but if we restrict to a smaller sub-Hopf-algebra of $\ca^{\vee}$, it does. Recall the definition of $\mathcal{E}^{\vee}$ from Section~\ref{conventions}.

\begin{prop}\label{yzrel}
There is an isomorphism of graded~$\mathbb{F}_{2}$-modules 
\begin{equation}\label{map yzrel}
H_*^c(\TP(y(n))) \cong P(t,t^{-1}) \otimes H_*(z(n)/v_n) \,.
\end{equation}
Moreover, the associated graded~$\Gr_{\mathrm{Gre}}^{*}(H_*^c(\TP(y(n))))$ of the Greenlees filtration on the continuous $\mathcal{E}^{\vee}$-comodule~$H_*^c(\TP(y(n)))$ can be identified with $P(t,t^{-1}) \otimes H_*(z(n)/v_n)$ as a continuous $\mathcal{E}^{\vee}$-comodule. 
\end{prop}

\begin{proof}
First, note that the $\ce^{\vee}$-coaction on $P(t,t^{-1})=H_{*}^{c}(\TP(\mathbb{S}))$ is trivial because $|t|=-2$ and each $\bxi_i$ is in an odd degree for all $i\ge 0$. We also know that $H_*^c(TP(y(n)))$ is a $H_*^c(TP(\mathbb{S})))$-module. 
The desired isomorphism is therefore given by a map
\[
P(\bxi^2_1,\ldots,\bxi^2_n) \otimes E(\bxi_2', \ldots, \bxi_n') \otimes E(\bxi_n \sigma \bxi_n) \to  H_*(z(n)/v_n)
\]
which is determined by 
\[
\bxi_i^2 \mapsto \bxi_i^2, \quad \bxi_j^{\prime} \mapsto \bxi_j, \quad \bxi_n \sigma \bxi_n \mapsto \bxi_{n+1}
\]
for $1 \leq i \leq n$ and $2 \leq j \leq n$. This is clearly an additive isomorphism, so it only remains to calculate the action of $Q_m$ for each $m \geq 0$. 

We wish to show that the $\mathcal{E}^\vee$-coaction on the elements in $\Gr_{\mathrm{Gre}}^{*}(H^c_*(\TP(y(n))))$ coincides with the $\mathcal{E}^\vee$-coaction on their images in $\Gr_{\mathrm{Gre}}^{*}H^c_*(\TP(\mathbb{F}_{2}))$. 
Since the coaction on 
\[\Gr_{\mathrm{Gre}}^{*}(H^c_*(\TP(\mathbb{F}_{2}))) \cong P(t,t^{-1})\otimes H_{*}(H\mathbb{Z}) \,.\]
is determined from a subquotient of the coaction on $\mathcal{A}^\vee$ using the module structure over the homological Tate spectral sequence for the sphere spectrum, we will compute the continuous $\ca^{\vee}$-coaction 
\[
\nu_n \co \Gr_{\mathrm{Gre}}^{*}H^c_*(\TP(y(n))) \to \ca^{\vee} \widehat{\otimes} \Gr_{\mathrm{Gre}}^{*}H^c_*(\TP(y(n)))
\]
by comparison with the known coaction
\[
\nu_\infty \co  \Gr_{\mathrm{Gre}}^{*}H^c_*(\TP(H\f_2)) \to \ca^{\vee} \widehat{\otimes} \Gr_{\mathrm{Gre}}^{*}H^c_*(\TP(H\f_2)) 
\]
using the map on continuous homology induced by the map 
\[
(\phi_n)_*: H_*(\THH(y(n))) \to H_*(\THH(H\f_2))
\]
studied in Proposition~\ref{Prop:ymapF}. Therefore we will observe that the coaction on $\Gr_{\mathrm{Gre}}^{*}H^c_*(\TP(y(n)))$ is determined from the coaction on $H_{*}(\THH(y(n))$ from Corollary~\ref{cor:coaction} after passing to a subquotient. 

For $x \in P(\bxi_1^2,\ldots,\bxi_n^2)$, we have $(\phi_n)_*(x) = x$ since $\bfilt(x) = 0$, so $\nu_n(x) = \nu_\infty(x)$. 

We calculate $\nu_n(\bxi_{i}')$ as follows. We have $(\phi_n)_*(\bxi_i') = \bxi_i' + y$ where $y$ is a $\sigma$-cycle with $\bfilt(y) = 0$. If $y \notin  J_n$, then we have $\nu_n(\bxi_i') = \nu_\infty(\bxi_i')$ modulo intermediary filtration. If $y \in J_n$, then $y$ is divisible by $\bxi_{n+r}^2$ or $\sigma \bxi_{n+r}$ for $r \geq 1$. In both cases, $\nu_\infty(y)$ does not contain any terms of the form $\bxi_m \otimes z$ for any $m$, so $Q_m$ acts trivially on $y$ for all $m$ and the action of $Q_i$ on $\bxi_m'$ is unaffected by $y$. Thus $\nu_n(\bxi_{i}')$ agrees with $\nu_{\infty}(\bxi_i)$. 

We have $(\phi_n)_*(\bxi_n \sigma \bxi_n) = \bxi_n \sigma \bxi_n + \bxi_{n+1}$ modulo intermediary filtration and complementary $\sigma$-cycles. Therefore, the equality 
\[ 
\nu_n(\bxi_n \sigma \xi_n) = \nu_\infty(\bxi_n \sigma \bxi_n) + \sum_{i=1}^{n+1}\bxi_i \otimes\bxi_{n+1-i}^{2^i} 
\] 
holds modulo indermediate filtration and complementary $\sigma$-cycles so as before we can argue that $Q_m(\bxi_n \sigma \bxi_n) = Q_m(\bxi_{n+1}')$ for all $m$.

Finally, we use Proposition~\ref{Prop:ymapF}(c) and the discussion above to determine the coaction on symbolic products of classes. 
\end{proof}
\begin{rem}
If the map~\eqref{map yzrel} can be upgraded to a map of $\mathcal{A}^{\vee}$-comodules induced by an $\mathbb{T}$-equivariant map $z(n)/v_{n}\to \THH(y(n))$, then this would imply an equivalence 
\[ 
((z(n)/v_{n})^{t\mathbb{T}})_{2}\to \TP(y(n))_{2} \,.
\]
\end{rem}

\begin{cor}\label{cor-main-tp}
There is an identification
\[ 
\lim_k \Ext_{E(\bar{\xi}_m)}^s(\mathbb{F}_2,\Gr_{\mathrm{Gre}}^*H_*(\TP(y(n))[k]))=0
\]
for $0\le m\le n$. 
\end{cor}
\begin{proof}
This follows from the proof of Proposition~\ref{yzrel} and Corollary~\ref{znmvn}, together with the fact that the unit map $\mathbb{S} \to y(n)$ makes this limit into a module over
\[ 
\lim_k \Ext_{E(\bar{\xi}_m)}^s(\mathbb{F}_2,\Gr_{\mathrm{Gre}}^*H_*(\TP(\mathbb{S})[k]))=P(v_{m})[t,t^{-1}]  \,.
\]
\end{proof}

\section{Continuous mod $2$ homology of $\TC^{-}(y(n))$}\label{Sec:TCm}
In this section, we mimic the analysis from Section~\ref{Sec:TP} in order to study the (continuous) Morava K-theory of the topological negative cyclic homology of $y(n)$. 

\subsection{Homological $\bT$-homotopy fixed point spectral sequence for $\THH(y(n))$}\label{hom fix point THH y(n)}
We now analyze the homological homotopy fixed point spectral sequence converging to the continuous homology of topological negative cyclic homology of $y(n)$. This spectral sequence has the form
\[ 
\mathrm{E}^2(n) := H^{-*}(\bT;H_*(\THH(y(n)))) \Longrightarrow H^c_*(\TC^-(y(n)))
\]
where 
\[ 
H^c_*(\TC^-(y(n)))=\underset{i}{\lim \thinspace} H_*(\TC^-(y(n))[i] )
\]
and $\TC^-(y(n))[i]:=F(E\bT^{(i)}_+,\THH(y(n)))^{\bT}$ so that $\underset{i }{\lim \thinspace}\TC^-(y(n))[i]=\TC^-(y(n))$. Note that there is also an approximate homological homotopy fixed point spectral sequence associated to the filtered object
\[ 
\mathrm{fil}_{\textup{sk}}^*\TC^{-}(y(n))[i]=\begin{cases}
\TC^{-}(y(n))[i] &\text{ if } j>i \\
\TC^{-}(y(n))[j] &\text{ if } 0\le i\le j\\
0 &\text{ if } j<0 \,. \\
\end{cases}
\]
We write 
\[
 \mathrm{Fil}_{\textup{sk}}^jH_*(\TC^{-}(y(n))[i])=H_*(\mathrm{fil}_{\textup{sk}}^j\TC^{-}(y(n)))
\]
and 
\[ 
	\mathrm{Gr}_{\textup{sk}}^jH_*(\TC^{-}(y(n))[i])=\mathrm{Fil}_{\textup{sk}}^jH_*(\TC^{-}(y(n))[i])/\mathrm{Fil}_{\textup{sk}}^{j+1}H_*(\TC^{-}(y(n))[i])\,.
\]

We will first discuss the case~$n=\infty$. This computation is entirely contained in~\cite{Rog11}, but we review it here and fill in some details for later use. 

\begin{prop}[{cf. \cite{Rog11}}]\label{Prop:Rog}
There is an equivalence
\[ 
\TC^{-}(H\mathbb{F}_2)_2\simeq \prod_{i\in \mathbb{Z}} \Sigma^{2i}H\mathbb{Z}_2 \,.
\] 
\end{prop}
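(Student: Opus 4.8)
The plan is to reproduce Rognes' computation \cite{Rog11} in two stages: first identify $\pi_*(TC^-(H\f_p))$ additively by running the homological homotopy fixed point spectral sequence and then the inverse limit Adams spectral sequence, and second promote the resulting isomorphism $\pi_*(TC^-(H\f_p)_p) \cong \bigoplus_{i \in \z}\Sigma^{2i}\z_p$ to an equivalence of spectra. The second stage, not the first, is where the work lies.

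For the first stage, recall from B{\"o}kstedt \cite{BokZ} that $\pi_*(THH(H\f_p)) \cong \f_p[\mu]$ with $|\mu| = 2$ and, by Proposition \ref{Prop:HTHH} with $n = \infty$, that $H_*(THH(H\f_p)) \cong \ca_* \otimes E(\sigma\bxi_j : j \geq 1)$ (for odd $p$ there is an additional divided-power factor $\Gamma(\sigma\btau_j : j \geq 0)$, handled analogously). The homological homotopy fixed point spectral sequence converging to $H^c_*(TC^-(H\f_p))$ has the form $E^2 = P(t) \otimes H_*(THH(H\f_p))$ with $|t| = (-2,0)$ and $d^2(x) = t\,\sigma(x)$ by \cite[Prop. 3.2]{BR05}, where $\sigma$ behaves as a symbolic derivation with $\sigma(\bxi_j) = \sigma\bxi_j$ and $\sigma(\sigma\bxi_j) = 0$. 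Running $d^2$ -- just as in the Tate-case computation of Proposition \ref{tatebasis} and \cite[Prop. 6.1]{BR05}, but keeping track of the fact that $t$ is not invertible in the homotopy fixed point setting -- and introducing the classes $\bxi_{j+1}' = \bxi_{j+1} + \bxi_j\sigma\bxi_j$, one computes $E^3$; the permanent cycle statement \cite[Thm. 5.1]{BR05} then yields $E^3 = E^\infty$. The resulting continuous $\ca_*$-comodule $H^c_*(TC^-(H\f_p))$ is built from the polynomial part $P(t)$ together with a copy of $(\ca /\!/ E(Q_0))_* = H_*(H\z_p)$ assembled from the $\bxi_j^2$ and $\bxi_{j+1}'$. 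Feeding this into the inverse limit Adams spectral sequence $E_2 = \Ext^{**}_{\ca_*}(\f_p, H^c_*(TC^-(H\f_p))) \Rightarrow \pi_*(TC^-(H\f_p))$ of \cite[Sec. 2]{LNR12} and applying a change-of-rings isomorphism to replace $\ca_*$ by $E(\bxi_1)$ -- on which the relevant coaction is trivial because $|t|$ is even and $\bxi_1$ is odd -- collapses the $E_2$-page to a collection of $a_0$-towers (integral Bockstein towers), one in each even stem, which resolve the extensions to give $\pi_{2i}(TC^-(H\f_p)) \cong \z_p$ for every $i \in \z$ and $\pi_{2i+1}(TC^-(H\f_p)) = 0$; the homotopy in non-positive even degrees comes from the $t$-power tower, while the homotopy in positive even degrees comes from internal degrees of the $H_*(H\z_p)$-factor.

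For the second stage I would produce a map $TC^-(H\f_p)_p \to \prod_{i\in\z}\Sigma^{2i}H\z_p$ realizing the homotopy isomorphism above, since any such map is then automatically an equivalence by the first stage; the difficulty -- the main obstacle -- is showing that $TC^-(H\f_p)_p$ is formal, i.e. that its Postnikov tower splits, with no exotic $k$-invariants in either positive or negative degrees. In negative degrees one compares with the parallel equivalence $TP(H\f_p)_p \simeq \prod_{i\in\z}\Sigma^{2i}H\z_p$ (whose continuous homology $P(t^{\pm 1})\otimes H_*(H\z_p)$ is co-induced over $E(\bxi_1)$): the canonical map $TC^-(H\f_p) \to TP(H\f_p)$ has cofiber $\Sigma\,THH(H\f_p)_{h\bT}$, which is $1$-connective, so this map is an equivalence in negative degrees and the splitting of $TP(H\f_p)_p$ forces the negative-degree splitting of $TC^-(H\f_p)_p$ -- this is exactly why one obtains the product over all of $\z$ and not merely over $i \geq 0$. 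In non-negative degrees one uses the B{\"o}kstedt periodicity lift $\tilde\mu \in \pi_2(TC^-(H\f_p))$ and the $E_\infty$-ring structure to split off $H\z_p = H\pi_0$ and iterate. Assembling the maps in both ranges and passing to the limit over the homotopy fixed point filtration produces the desired map, which the first stage shows is an equivalence. The two points requiring genuine care are (i) the absence of hidden $t$-torsion in the homological homotopy fixed point spectral sequence, so that $H^c_*(TC^-(H\f_p))$ has exactly the claimed form -- controlled by the permanent-cycle bookkeeping of \cite{BR05} -- and (ii) compatibility of the splitting maps across the filtration, so that they assemble in the inverse limit; I expect (ii), together with ruling out negative-degree $k$-invariants, to be the crux.
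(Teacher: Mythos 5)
Your first stage agrees with the paper: the homological homotopy fixed point spectral sequence (with $d^2(x)=t\sigma x$, the classes $\bxi_{j+1}'$, and collapse at $E^3$ via \cite[Thm. 5.1]{BR05}) followed by the inverse limit Adams spectral sequence and a change of rings to $E(\bxi_1)$ is exactly how the paper obtains the additive isomorphism $\pi_*(TC^-(H\f_p)_p)\cong\prod_{i\in\z}\Sigma^{2i}\z_p$. The gap is in your second stage. The paper does not split the Postnikov tower by hand: it invokes the fact that $TC^-(H\f_p)_p$ is a commutative $K(\f_p)_p$-algebra via the cyclotomic trace, and $K(\f_p)_p\simeq H\z_p$ by Quillen, so $TC^-(H\f_p)_p$ is an $H\z_p$-module spectrum; every $H\z_p$-module is equivalent (as a spectrum) to the product of Eilenberg--MacLane spectra on its homotopy groups, so all $k$-invariants vanish for free and the additive computation immediately upgrades to the stated equivalence. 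This one observation replaces everything in your second stage.

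Your proposed substitute does not close the gap. The negative-degree step assumes a spectrum-level splitting $TP(H\f_p)_p\simeq\prod_{i\in\z}\Sigma^{2i}H\z_p$, but that splitting has exactly the same status as the statement you are trying to prove (the paper, too, only records the additive isomorphism for $TP$; the spectrum-level statement again comes from the $H\z_p$-algebra structure), so as written the argument is circular. In non-negative degrees, ``split off $H\pi_0$ and iterate'' is precisely the assertion that the relevant $k$-invariants vanish, which is the thing to be proved; the existence of the B\"okstedt lift $\tilde\mu\in\pi_2$ and the $E_\infty$-structure give you multiplication maps but no splitting of the truncation $\tau_{\geq 0}TC^-(H\f_p)_p\to H\z_p$ without further input (indeed $\pi_*TC^-(H\f_p)\cong\z_p[u,v]/(uv-p)$, so multiplication by $\tilde\mu$ is not even an isomorphism out of negative degrees, and nothing in the ring structure forces formality). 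You correctly flag points (i) and (ii) as the crux, but they are left unresolved, and the efficient resolution is the $K(\f_p)_p\simeq H\z_p$-algebra structure rather than a filtration-compatibility argument.
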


\begin{proof}
The input of the homological homotopy fixed point spectral sequence is 
\[
\mathrm{E}^2(\infty) \cong P(t)\otimes \ca^{\vee}\otimes P(\sigma \bxi_1) \,,
\]
where $|t| = (-2,0)$ and an element $x$ from $\ca^\vee \otimes P(\sigma \bxi_1)$ of degree $d$ is in $(0,d)$. As in the homological Tate spectral sequence, the differentials are $t$-linear and are determined by those of the form~$d^2(x)=t\sigma x$ given by~\cite[Prop.~3.2]{BR05}. Since $\eta$ is trivial in $\THH_*(H\mathbb{F}_2)$, $\sigma$ is a derivation and therefore the only nontrivial differentials are $d^2(\bxi_i)=t\sigma \bxi_i$. Recall that $(\sigma \xi_1)^{2^i}=\sigma \bxi_{i+1}$. Then $E^3(\infty)$ is isomorphic to 
\[ P(t)\otimes P(\bxi_1^2,\bxi_2^{\prime},\dots )\oplus P(\bxi_1^2,\bxi_2^{\prime},\dots )\{(\sigma \xi_1)^k|k\ge 1\} \cong P(\bxi_1^2,\bxi_2',\ldots) \otimes [P(t) \oplus \f_2\{ (\sigma \bxi_1)^k | k \geq 1\}] \]
and the spectral sequence then collapses by~\cite[Thm.~5.1]{BR05}. 

There is an isomorphism of $\ca^{\vee}$-comodules 
\[ 
P(\bxi_1^2,\bxi_2',\ldots) \otimes [P(t) \oplus \f_2\{ (\sigma \bxi_1)^k | k \geq 1\}] \cong H_*(H\z) \otimes [P(t) \oplus \f_2\{ (\sigma \bxi_1)^k | k \geq 1\}]
\]
defined by sending $\bxi_1^{2j}$ to $\bxi_1^{2j}+(\sigma \bxi_1)^j$ for all $j \geq 0$, $\bxi_i^{\prime}$ to $\bxi_i$ for $i\ge 2$, and $t^\ell$ to $t^\ell$ for all $\ell \geq 0$, and $(\sigma \bxi_1)^k$ to $(\sigma \bxi_1)^k$ for all $k \geq 0$. 

The inverse limit Adams spectral sequence then has $\mathrm{E}_2$-page
\[ 
\Ext_{\ca^{\vee}}^{*,*}(\mathbb{F}_2, H_*(H\z) \otimes [P(t) \oplus \f_2\{ (\sigma \bxi_1)^k | k \geq 1\}])\cong \Ext_{E(\bxi_1)_*}^{*,*}(\mathbb{F}_2,P(t)\oplus \f_2\{(\sigma \xi_1)^k|k\ge 1\}) \,.
\]
Since $t$ and $\sigma \bxi_1$ are in even degrees, the $Q_0$-action is trivial and we see that this $\mathrm{E}_2$-page is isomorphic to 
\[ 
P(v_0)\otimes P(t)\oplus \f_2\{(\sigma \xi_1)^k|k\ge 1\} \,.
\]
The usual calculation for resolving extensions in this spectral sequence produces an isomorphism 
\[
\pi_*(\TC^{-}(H\f_2)_{2}) \simeq \prod_{i \in \mathbb{Z}}\pi_*(\Sigma^{2i}H\mathbb{Z}_2) \,. 
\]
The result then follows from the fact that $\TC^{-}(H\mathbb{F}_2)_2$ is a commutative $\K(\mathbb{F}_2)_2\simeq H\mathbb{Z}_2$-algebra. 
\end{proof}

\begin{rem}
In the case $n=0$, the map $\THH(\mathbb{S})\to \mathbb{S}$ induced by collapsing $\bT$ to a point is a $\bT$-equivariant equivalence and consequently the $\bT$-action on $\THH(\mathbb{S})\simeq \mathbb{S}$ is trivial. 
This implies that 
\[ 
\TC^{-}(\mathbb{S})\simeq F(\mathbb{C}P^{\infty}_+,\mathbb{S})
\]
Therefore, we expect $\TC^{-}(y(n))_{2}$ to interpolate between the $2$-completion of the Spanier--Whitehead dual of $\mathbb{C}P^{\infty}_+$ and $\prod_{i\in \mathbb{Z}} \Sigma^{2i} \mathbb{Z}_{2}$. Our computations are consistent with this expectation. 
\end{rem}

\begin{prop}\label{negbasis}
There is an isomorphism of graded $\f_2$-vector spaces 
\begin{align*}
 H^c_*(\TC^-(y(n))) \cong &  H_*(z(n)/v_n)\otimes P(t)  \oplus T 
 \end{align*}
with $\mathrm{T}$ the simple $t$-torsion module 
\[
\mathrm{T} := H_*(z(n)) \otimes \f_2\left\{  \prod_{i=1}^{n}(\sigma\bxi_i)^{\epsilon_i} : \epsilon_i \in\{0,1\}, \sum \epsilon_i\ge 1 \right\} ,
\]
where $|t| = (-2,0)$, $|\bxi_i| = (0,2^i-1)$, and $|\sigma \bxi_i| = (0,2^i)$. Moreover, there is an isomorphism of continuous $\mathcal{E}^{\vee}$-comodules 
\[
 \lim_i \Gr_{\sk}^{*}H_*(\TC^{-}(y(n))[i])\cong H_*(z(n)/v_n)\otimes P(t)  \oplus \mathrm{T} 
 \]
 where the $\mathcal{E}^{\vee}$-comodule action on $H_*(z(n))$ and $H_*(z(n)/v_n)$ was described in Corollaries \ref{mhzn} and~\ref{znmvn} and the $\mathcal{E}^{\vee}$-comodule action on $\f_2\left\{  \prod_{i=1}^{n}(\sigma\bxi_i)^{\epsilon_i} : \epsilon_i \in\{0,1\}, \sum \epsilon_i\ge 1 \right\}$ and $P(t)$ is trivial. 
\end{prop}

\begin{proof}
We will use the homological homotopy fixed point spectral sequence. Before we give the details, we note that the somewhat mysterious summand $T$ above will appear as the kernel of the $d^2$-differential of the homological homotopy fixed point spectral sequence in bidegrees~$(0,*)$, i.e. it consists of the kernel of $d^2$ along one edge of the spectral sequence.\footnote{Similar ``edge terms" appear in the computations of Bruner--Rognes~\cite[Prop.~6.1]{BR05}.} The other summand will be the homology of the $d_2$-differentials in the remaining bidegrees. 

The homological homotopy fixed point spectral sequence has $\mathrm{E}^2$-page
\[
\mathrm{E}^2(n) = H^{-*}(\bT;H_*(THH(y(n)))) \cong P(t) \otimes P(\bxi_1,\ldots,\bxi_n) \otimes E(\sigma \bxi_1,\ldots,\sigma \bxi_n)
\]
where $|t| = (-2,0)$, $|\bxi_i| = (0,2^i-1)$ and $|\sigma \bxi_i| = (0,2^i)$. As in the Tate case, $\mathrm{E}^2(n)$ is a module over $\mathrm{E}^2(0)\cong P(t)$. Therefore $d^r(t)=0$ for all $r\ge 1$ and all differentials are $t$-linear. 

The $d^2$-differentials in the homological homotopy fixed point spectral sequence are of the form~$d^2(x) = t \sigma x$ by~\cite[Prop.~3.2]{BR05}, and $\sigma$ acts as a derivation as in the Tate case. We therefore obtain an additive isomorphism
\[
\ker d^2 \cong P(t) \otimes  P(\bxi^2_1,\bxi_2',\ldots,\bxi_n') \oplus \mathrm{T} 
\]
where $\mathrm{T}:= P(\bxi^2_1,\bxi_2',\ldots,\bxi_n') \otimes T_0$ and 
\begin{equation}\label{tzero}
\mathrm{T}_0=\f_2\{ \prod_{i=1}^{n}\sigma\bxi_i^{\epsilon_i} : \epsilon_i \in\{0,1\} , \sum_i\epsilon_i\ge 1\}  \,.
\end{equation}
The summand $\mathrm{T}$ can be identified with a summand in $H_*(\THH(y(n)))$, up to intermediary filtration, by sending $a \otimes b \in \mathrm{T}$ to $ab \in H_*(\THH(H\f_2))$; the product $ab$ has a unique lift up to intermediary filtration in $H_*(\THH(y(n)))$ by the computations above. 

We therefore just need to compute $\im d^2\subset \ker d^2$ to identify $\mathrm{E}^3(n)$. First, note that $t\sigma \bxi_i$ is in $\im d^2$ for all $1\le i\le n$ since $d^2(\bxi_i)=t\sigma \bxi_i$. Also, no element of the form~$x\in P(t)\otimes P(\bxi^2_1,\ldots,\bxi^2_n)\otimes P(\bxi_2',...,\bxi_n')$ is in $\im d^2$. Finally, observe that $t\bxi_ix_i +t\bxi_jx_j\in \im d^2$ for $1\le i < j \le n$ as in the proof of Proposition \ref{tatebasis}. ~We conclude that 
\[ 
\im d^2 = \left[ P(t) \otimes E(\sigma \bxi_1, \ldots, \sigma \bxi_n) \otimes \f_2\{y \cdot \bxi_ix_i + y \bxi_jx_j : 1\le i < j \le  n \text{ and } y \in \ker d^2\} \right] \{t\} \,.
\]

Thus, up to a change of basis, the class $t\bxi_nx_n$ survives to the $\mathrm{E}^3(n)$-page. We can therefore identify the $\mathrm{E}^3$-page as
\begin{align*} 
\mathrm{E}^3_{**}(n) \cong P(\bxi^2_1,\bxi_2',\ldots \bxi_n' )\otimes E(\bxi_{n}x_n)\otimes  P(t) \oplus \mathrm{T} 
\end{align*}
 with $\mathrm{T}:= P(\bxi^2_1,\bxi_2',\ldots,\bxi_n') \otimes \mathrm{T}_0$  and $\mathrm{T}_0$ defined in~\eqref{tzero}.
 
To see that there are no further differentials, we use the map of homological $\bT$-homotopy fixed point spectral sequences induced by the $\bT$-equivariant map $\THH(y(n)) \to \THH(H\f_2)$. 
The homological homotopy fixed point spectral sequence converging to the graded $\f_{2}$-vector spaces $H^c_*(\TC^{-}(H\f_2))$ has $\mathrm{E}^3$-page
\[
\mathrm{E}^3(\infty) \cong P(t) \otimes P(\bxi_1^2, \bxi_{i+1}' : i \geq 1)  \oplus  P(\bxi_1^2,\bxi_{i+1}' : i \geq 1)\otimes \f_2\{ (\sigma \bxi_1)^k : k\geq 1 \} \,.
\]
By Proposition~\ref{Prop:ymapF} and the fact that all $d^2$-differentials in the source also occur in the target, the map is injective on $\mathrm{E}^3$-pages. Since there is an isomorphism~$\mathrm{E}^3(\infty)\cong \mathrm{E}^{\infty}(\infty)$ by~\cite[Thm.~5.1]{BR05}, there are isomorphisms~$\mathrm{E}^3(n)\cong \mathrm{E}^\infty(n)$ for all $n>0$. 

As in the proof of Proposition~\ref{yzrel}, we can identify 
\[ 
	P(\bxi^2_1, \bxi_2',\ldots \bxi_n')\otimes E(\bxi_{n}\sigma \bxi_n)\otimes P(t)=H_*(z(n)/v_n)\otimes P(t)
\] 
and 
\[ 
	\mathrm{T}\cong  H_*(z(n))\otimes \f_2\{\prod_{i=1}^n (\sigma \bar{\xi}_i)^{\epsilon_i}: \epsilon_i \in \{0,1\}, \sum \epsilon_i \geq 1\}
\]
as continuous $\mathcal{E}^\vee$-comodules and consequently, we also identify 
\[
	\mathrm{Gr}_{\textup{Gre}}^*H_*(\TC^{-}(y(n)))\cong H_*(z(n)/v_n)\otimes P(t)\oplus H_*(z(n))\otimes \f_2\{\prod_{i=1}^n (\sigma \bar{\xi}_i)^{\epsilon_i}: \epsilon_i \in \{0,1\}, \sum \epsilon_i \geq 1\}
\] 
as $\mathcal{E}^\vee$-comodules. 
\end{proof}

\begin{cor}\label{cor-main-tn}
There is an identification
\[ 
	\lim_k \Ext_{E(\bxi_{m})}^{s,*}(\mathbb{F}_2,\Gr_{\textup{Gre}}^{*}H_{*}(\TC^{-}(y(n))[k];\mathbb{F}_{2}))=0
\]
for $s>0$. 
\end{cor}
\begin{proof}
The analogue of the last display equation in the proof of  Proposition~\ref{negbasis} implies that 
$$\Gr_{\textup{Gre}}^{*} H_*(\TC^-(y(n))[k];\f_2)$$
can be expressed in terms of $H_*(z(n)/v_n)$ and $H_*(z(n))$ (tensored with appropriate comodules). The relevant Margolis homology groups then vanish for all $k$ in view of Corollaries \ref{mhzn} and~\ref{znmvn}, so the limit under consideration is a limit of trivial groups and thus vanishes. 
\end{proof}

\section{The inverse limit May--Ravenel spectral sequence}\label{Sec:MR}

In this section, we pass from continuous homology to continuous Morava K-theory using inverse limit Adams spectral sequences~\cite{LNR11}. The main technical difficulty of this approach is computing the $\mathrm{E}_2$-pages of these spectral sequences, which boils down to resolving hidden $\mathcal{E}^{\vee}$-comodule extensions in the homological homotopy fixed point and Tate spectral sequences. We do this using a new technical tool, the ``inverse limit May--Ravenel spectral sequence,''\footnote{The spectral sequence from \cite{Rav86} generalizing~\cite{May65} is called the Ravenel--May spectral sequence in~\cite{Sal23} and Ravenel spectral sequence in~\cite{HW22}. We prefer to give credit to both authors and use alphabetical order.} which we develop in Section~\ref{SS:MR}. We apply the spectral sequence to complete the proof of the main theorem in Section~\ref{SS:Finish}. 

\subsection{The inverse limit May--Ravenel spectral sequence}\label{SS:MR}

In foundational work of May~\cite{May65},  he constructs a spectral sequence associated to a filtered Hopf algebra. This is generalized in work of Ravenel~\cite{Rav86}. Here we give a further generalization that may be of independent interest. 

Throughout this section, let $(\mathbb{F}_{p},\Gamma)$ be a graded commutative, connective, flat Hopf algebroid such that $\Gamma$ is a finite type graded $\mathbb{F}_{p}$-module. We write $\overline{\Gamma}=\mathrm{coker}(\eta_{L})$ for the cokernel of the left unit. In the following construction, we will work in the abelian category $\mathrm{Ch}(\mathbb{F}_{p},\Gamma)$ of chain complexes of graded $\Gamma$-comodules, where we write $\mathrm{CoMod}(\mathbb{F}_{p},\Gamma)$ for the abelian category of graded $\Gamma$-comodules and simply $\mathrm{Cotor}_{(\mathbb{F}_{p},\Gamma)}^{s}(X,-)$ for the derived functors of the cotensor $X\square-: \mathrm{CoMod}(\mathbb{F}_{p},\Gamma)\to \mathrm{Ab}$ where $\mathrm{Ab}$ denotes the category of abelian groups. We also abbreviate and write $\mathrm{Pro}(\mathbb{F}_{p},\Gamma):=\mathrm{Pro}(\mathrm{Comod}(\mathbb{F}_{p},\Gamma))$ and $\mathrm{Ind}(\mathbb{F}_{p},\Gamma):=\mathrm{Ind}(\mathrm{Comod}(\mathbb{F}_{p},\Gamma))$. We write $\mathrm{Pro}(\mathbb{F}_{p},\Gamma)^{\textup{f.t.}}$ for the full subcategory of $\mathrm{Pro}(\mathbb{F}_{p},\Gamma)$ spanned by those pro-objects in $(\mathbb{F}_{p},\Gamma)$ that are object-wise finite type. We note that this forms an abelian category. 

\begin{const}\label{mayravss}
Let $\{ M_{i}\}$ be in $\mathrm{Pro}(\mathbb{F}_{p},\Gamma)^{\textup{f.t.}}$. 
Let $\{ \mathrm{Fil}^{\star} M_{i}\}$  be a decreasingly filtered object in $\mathrm{Pro}(\mathbb{F}_{p},\Gamma)^{\textup{f.t.}}$ such that all the structure maps $\{ \mathrm{Fil}^{s} M_{i}\}\to \{ \mathrm{Fil}^{s-1} M_{i}\}$ are level maps of pro-objects, 
$\lim_{s}\{ \mathrm{Fil}^{s}M_{i}\}=\{M_{i}\}\,$,  and $\colim_{s}\{\mathrm{Fil}^{s}M_{i}\} = 0$. For each fixed $i$, we write $D_{\Gamma}^{\bullet}(M_{i})$ and $C_{\Gamma}^{\bullet}(M_{i})$, following the convention in \cite[Definition~A1.2.11]{Rav86}, and we further 
consider 
\[\mathrm{Fil}^{\star}\{ D^\bullet_{\Gamma}(M_{i})\} : =\{ \Gamma \otimes_{\mathbb{F}_{p}}\overline{\Gamma}^{\otimes_{\mathbb{F}_{p}}\bullet} \otimes_{\mathbb{F}_{p}}\mathrm{Fil}^{\star}M_{i} \} \]
and 
\begin{equation}\label{filtered object for inverse limit may ravenel}
\mathrm{Fil}^{\star}\{ C_{\Gamma}^{\bullet}(M_{i})\} : =\{ \mathbb{F}_{p}\square_{\Gamma} \big ( \Gamma \otimes_{\mathbb{F}_{p}}\overline{\Gamma}^{\otimes_{\mathbb{F}_{p}}\bullet} \otimes_{\mathbb{F}_{p}}\mathrm{Fil}^{\star}M_{i}\big ) \} 
\end{equation}
as filtered objects in $\mathrm{Pro}(\mathrm{Ch}(\mathbb{F}_{p},\Gamma))$. We also write 
\[\mathrm{Gr}^{s}\{ D^\bullet_{\Gamma}(M_{i})\} : =\mathrm{Fil}^{s}\{ D^\bullet_{\Gamma}(M_{i})\}/\mathrm{Fil}^{s+1}\{ D^\bullet_{\Gamma}(M_{i})\}  \]
and 
\begin{equation}\label{filtered object for inverse limit may ravenel}
\mathrm{Gr}^{s}\{ C_{\Gamma}^{\bullet}(M_{i})\} : =\mathrm{Fil}^{s}\{ C_{\Gamma}^{\bullet}(M_{i})\}/\mathrm{Fil}^{s+1}\{ C_{\Gamma}^{\bullet}(M_{i}) \} \,.
\end{equation}
We call the spectral sequence produced by applying homology to the filtered chain complex of abelian groups
\[\lim_{i}\mathrm{Fil}^{\star}\{ C_{\Gamma}^{\bullet}(M_{i})\}\]  
the \emph{inverse limit May--Ravenel spectral sequence}. 
\end{const}
 
\begin{lem}\label{lem:cotor-pro}
Suppose that $\{N_{i}\}$ is in $\mathrm{Pro}(\mathbb{F}_{p},\Gamma)^{\textup{f.t.}}$. 
Then there is an isomorphism
\begin{equation}\label{eq:cotor-pro}
\Cotor_{\mathrm{Pro}(\mathbb{F}_{p},\Gamma)}^{*}(\mathbb{F}_{p},\{N_{i}\})\cong  \lim_{i} \Cotor_{(\mathbb{F}_{p},\Gamma)}^{*}(\mathbb{F}_{p},N_{i}) \,.
\end{equation}
\end{lem}

\begin{proof}
Note that $\Gamma \otimes_{\mathbb{F}_{p}}\overline{\Gamma}^{\otimes_{\mathbb{F}_{p}}t} \otimes_{\mathbb{F}_{p}}N_{i}$ is a relative injective $\Gamma$-comodule for each $t$, and $i$ and the levelwise cotensor $\mathbb{F}_{p}\square-$ is exact on levelwise relative injective pro-objects in $\Gamma$-comodules. The composite $\lim (\mathbb{F}_{p}\square -)$ is exact on objects in $\mathrm{Pro}(\mathbb{F}_{p},\Gamma)^{\textup{f.t.}}$ that are object-wise relative injective. 
The isomorphism then holds by using the fact that  $\mathrm{Pro}(\mathrm{Ch}(\mathbb{F}_{p},\Gamma))=\mathrm{Ind}(\mathrm{Ch}(\mathbb{F}_{p},\Gamma)^{\textup{op}})^{\textup{op}}$ and the analogue of \cite[Corollary 15.3.9]{KS06} for pro-objects where we replace $\mathrm{Hom}_{\mathrm{Ind}(\mathcal{C})}(X,\--):\mathrm{Ind}(\mathcal{C}) \to \mathrm{Ab}$ with the composite functor 
\[\lim (\mathbb{F}_{p}\square \--):\mathrm{Pro}(\mathbb{F}_{p},\Gamma)^{\textup{f.t.}}\to \mathrm{Ab} \,.\] 

Now, observe that the functor $\mathbb{F}_p \square -$ sends relative injective finite type $(\mathbb{F}_p,\Gamma)$-comodules to $\lim$-acyclic $(\mathbb{F}_p,\Gamma)$-comodules since all levelwise finite type pro-objects in $(\mathbb{F}_p,\Gamma)$-comodules are $\lim$-acyclic (cf.~\cite[Lemma~15.16]{Was82}). Since $\mathrm{Pro}(\mathbb{F}_{p},\Gamma)^{\textup{f.t.}}$ is an abelian category, we can therefore consider the Grothendieck spectral sequence for the composite 
\[ \lim  (\mathbb{F}_{p}\square \--): \mathrm{Pro}(\mathbb{F}_{p},\Gamma)^{\textup{f.t.}}\to \mathrm{Ab} \,.\] 
This Grothendieck spectral sequence collapses to the line given by $\lim \Cotor_{(\mathbb{F}_{p},\Gamma)}^{s}(\mathbb{F}_{p},-)$, again since all finite type $(\mathbb{F}_p,\Gamma)$-comodules are $\lim$-acyclic, yielding the desired isomorphism. 
\end{proof}

\begin{prop}\label{prop:E2page}
We can identify the $\mathrm{E}_{2}$-page of the spectral sequence from Construction~\ref{mayravss} with 
\[ \lim_{i}\Cotor_{(\mathbb{F}_{p},\Gamma)}^{*}(\mathbb{F}_{p},\mathrm{Gr}^{\ast}M_{i}) \] 
whenever $M_{i}$ is a finite type $\mathbb{F}_{p}$-module for each $i$.  
\end{prop}
\begin{proof}
Since kernels and cokernels (and consequently images) of level maps are computed levelwise, we observe that the $\mathrm{E}_{2}$-page from Construction~\ref{mayravss} satisfies
\[ 
\mathrm{E}_{2}^{*,*} = H_{*}( \mathrm{lim}_{i} \mathrm{Gr}^{\ast}\{ C_{\Gamma}^{\bullet}(M_{i})\} ) \,.
\]
We then determine that 
\begin{align}
\label{l1}H_{*}( \mathrm{lim}_{i} \mathrm{Gr}^{\ast}\{ C_{\Gamma}^{\bullet}(M_{i})\} ) &  \cong \Cotor_{\mathrm{Pro}(\mathbb{F}_{p},\Gamma)}^{*}(\mathbb{F}_{p},\{\mathrm{Gr}^{\ast}M_{i}\}) \\
\label{l2} & \cong \lim_{i} \Cotor_{(\mathbb{F}_{p},\Gamma)}^{*}(\mathbb{F}_{p},\mathrm{Gr}^{\ast}M_{i})   
\end{align}
where we write $\Cotor_{\mathrm{Pro}(\mathbb{F}_{p},\Gamma)}^{*}(\mathbb{F}_{p},\{\mathrm{Gr}^{\ast}M_{i}\})$ for the derived functors of the functor 
\[ 
\mathrm{lim} (\mathbb{F}_{p }\square \--): \mathrm{Pro}(\mathbb{F}_{p},\Gamma)^{\textup{f.t.}}\to \mathrm{Ab} \,.
\] 
The isomorphism~\eqref{l1} then holds because $\Gamma \otimes_{\mathbb{F}_{p}}\overline{\Gamma}^{\otimes_{\mathbb{F}_{p}}\bullet} \otimes_{\mathbb{F}_{p}}\mathrm{Fil}^{\star}M_{i}$ and $\Gamma \otimes_{\mathbb{F}_{p}}\overline{\Gamma}^{\otimes_{\mathbb{F}_{p}}\bullet} \otimes_{\mathbb{F}_{p}}\mathrm{Gr}^{\ast}M_{i}$ are relative injective $\Gamma$-comodules for each $\star$, $\bullet$, and $k$ and the levelwise cotensor~$\mathbb{F}_{p}\square\--$ is exact on levelwise relative injective pro-obects in $\Gamma$-comodules. Moreover, the composite $\lim (\mathbb{F}_{p}\square \--)$ is exact on objects in $\mathrm{Pro}(\mathbb{F}_{p},\Gamma)^{\textup{f.t.}}$ that are object-wise relative injective $\Gamma$-comodules, such as $\Gamma \otimes_{\mathbb{F}_{p}}\overline{\Gamma}^{\otimes_{\mathbb{F}_{p}}s} \otimes_{\mathbb{F}_{p}}\mathrm{Gr}^{\ast}M_{i}$. The isomorphism~\eqref{l2} then holds by Lemma~\ref{lem:cotor-pro}. 
\end{proof}

\begin{prop}\label{prop:conditional-convergence-May--Ravenel}
Suppose $\{M_i\}$ is in $\mathrm{Pro}(\mathbb{F}_p,\Gamma)^{\textup{f.t.}}$ and the filtered objects~$\mathrm{Fil}^fM_{i}$ in $\mathrm{Pro}(\mathbb{F}_p,\Gamma)^{\textup{f.t.}}$ from Construction~\ref{mayravss} are eventually constant for each $i$ and satisfy
\[ 
 \lim_{f} \left ( \mathrm{Fil}^f\{ C_{\Gamma}^{\bullet}(M_{i})\}  \right )=C_{\Gamma}^{\bullet}(M_{i})
 \]
for each $i$. Then the inverse limit May--Ravenel spectral sequence conditionally converges to 
\[ 
\lim_{i}\mathrm{Cotor}_{(\mathbb{F}_p,\Gamma)}^{s}(\mathbb{F}_p,M_{i})\,.
\]
\end{prop}
\begin{proof}
Since 
\[ 
\colim_{f} \{ \mathrm{Fil}^fM_{i}\}=0 \,,
\]
we know that 
\[ 
\colim_{f} \mathrm{Fil}^f\{ C_{\Gamma}^{\bullet}(M_{i})\}=0
\]
so the result follows because passing to homology commutes with filtered colimits. Moreover, we note that  
\begin{align}
\label{m1} H_{*} \left(\lim_{f}  ( \mathrm{Fil}^f\{ C_{\Gamma}^{\bullet}(M_{i})\} ) \right ) & \cong  \Cotor_{\mathrm{Pro}(\mathbb{F}_{p},\Gamma)}^{*}(\mathbb{F}_{p}, \{ M_{i}\})\\
\label{m2}  & =\lim_{i}\Cotor_{(\mathbb{F}_{p},\Gamma)}^{*}(\mathbb{F}_{p},M_{i}) \,.
\end{align}
Here the isomorphism \eqref{m1} holds because $\mathrm{Fil}^fM_{i}$ is eventually constant for each $i$ and the isomorphism~\eqref{m2} holds by Lemma~\ref{lem:cotor-pro}. 
Therefore, the inverse limit May--Ravenel spectral sequence conditionally converges to the limit in the sense of~\cite[Definition 5.10]{Boa99} under the stated hypotheses.
\end{proof} 

\begin{prop}\label{prop:inverse limit-May--Ravenel}
Set $\Gamma=E(\bar{\xi}_{m})$ and suppose $\mathrm{Fil}^{s}M_{i}$ from Construction~\ref{mayravss} is eventually constant as $s \to  -\infty$ for each $i$. Suppose the inverse limit May--Ravenel spectral sequence has a horizontal vanishing line of slope zero and postive $y$-intercept. Further, assume that $M_{i}$ is a finite type~$\Gamma$-comodule for each~$i$. Then the inverse limit May--Ravenel spectral sequence has $\mathrm{E}_{2}$-page 
 \[ 
 \mathrm{E}_{2}^{s,t,*} = \lim_{i}\Cotor_{(\mathbb{F}_p,E(\bar{\xi}_{m}))}^{s}(\mathbb{F}_{p},\mathrm{Gr}^{t} M_{i}) \,,
 \]
differential 
\[
d_{r}^{s,t,u} \colon \thinspace  \mathrm{E}_{2}^{s,t,u} \to \mathrm{E}_{2}^{s+1,t-r,u} \,,
\]
and strongly converges to 
\[ 
\lim_{i}\Cotor_{(\mathbb{F}_p,E(\bar{\xi}_{m}))}^{s,\ast}(\mathbb{F}_{p},M_{i}) 
\]
whenever $M_{i}$ is a bounded below for each~$i$.
\end{prop}
\begin{proof}
Conditional convergence to 
\[ 
\lim_{i}\Cotor_{(\mathbb{F}_p,E(\bar{\xi}_{m}))}^{s,\ast}(\mathbb{F}_{p},M_{i}) 
\]
follows from Proposition~\ref{prop:conditional-convergence-May--Ravenel} since 
\[
 \lim_{f} \left ( \mathrm{Fil}^f\{ C_{\Gamma}^{\bullet}(M_{i})\}  \right )=C_{\Gamma}^{\bullet}(M_{i}) \,
 \]
 holds whenever $ \mathrm{Fil}^fM_{i}$ is eventually constant as $f \to -\infty$ for each $i$ and we know that $C_{\Gamma}^{s}(\mathrm{Gr}^{t}M_{i})$ and $C_{\Gamma}^{s}(M_{i})$ are in $\mathrm{Pro}(\mathbb{F}_p,E(\bar{\xi}_m))^{\textup{f.t.}}$ for each $s$ and $t$ under our finite type and bounded below hypotheses. The horizontal vanishing line and the finite type hypothesis also imply that the obstruction~$\mathrm{W}$ from~\cite[Lem.~8.5]{Boa99} vanishes by~\cite[Lem.~8.1]{Boa99}. The vanishing line also implies that $Z_{r}^{s}$ from~\cite[p.~63]{Boa99} is eventually constant and therefore $R\mathrm{E}_{\infty}=0$ by definition (cf.~\cite[Sec.~5, Eq.~(51)]{Boa99}). Consequently, strong convergence follows from~\cite[Theorem~8.2]{Boa99}. 
\end{proof} 

\begin{rem}
Note that by~\cite[Corollary~A1.2.12]{Rav86}, we can identify 
\[ \Cotor_{(\mathbb{F}_{p},E(\xi_{m}))}^{*}(\mathbb{F}_{p},M)=\Ext_{E(\xi_{m})}^{*}(\mathbb{F}_{p},M) \,. \]
\end{rem}

\subsection{Final computation}\label{SS:Finish}

We now specialize back to homological trace methods for $y(n)$. As a consequence of the results in the previous subsection, we have the following. 

\begin{prop}\label{prop--inverse limit-yn}
There are strongly convergent inverse limit May--Ravenel spectral sequences 
\[ 
\lim_{i} \Ext_{E(\xi_{m})}^{s,\ast}(\mathbb{F}_{2},\mathrm{Gr}^{\ast}_{\textup{sk}}H_{*}(\TC^{-}(y(n))[i]))\implies  \lim_{i}\Ext_{E(\xi_{m})}^{s,\ast}(\mathbb{F}_{2},H_{*}(\TC^{-}(y(n))[i]))
\]
and 
\[ 
\lim_{i} \Ext_{E(\xi_{m})}^{s,\ast}(\mathbb{F}_{2},\mathrm{Gr}^{\ast}_{\textup{Gre}}H_{*}(\TP(y(n))[i])\implies \lim_{i}\Ext_{E(\xi_{m})}^{s,\ast}(\mathbb{F}_{2},H_{*}(\TP(y(n))[i])\}
\]
where $\mathrm{Gr}^{\ast}_{\textup{sk}}H_{*}(\TC^{-}(y(n))[i])$ denotes the $\mathrm{E}_{\infty}$-page of the approximate homological homotopy fixed point spectral sequence and $\mathrm{Gr}^{\ast}_{\textup{Gre}}H_{*}(\TC^{-}(y(n))[i])$ denotes the $\mathrm{E}_{\infty}$-page of the approximate homological Tate spectral sequence. 
\end{prop}
\begin{proof}
By Proposition~\ref{prop:inverse limit-May--Ravenel}, it suffices to check that $H_{*}(\TC^{-}(y(n))[i])$ and $H_{*}(\TP(y(n))[i])$ are finite type, bounded below, and the filtrations 
\[ 
\mathrm{Fil}^{\ast}_{\textup{sk}}H_{*}(\TC^{-}(y(n))[i]) \quad \text{ and } \quad \mathrm{Fil}^{\ast}_{\textup{Gre}}H_{*}(\TP(y(n))[i])
\] 
are eventually constant, all of which are clear from the definitions and the computations in the proofs of Propositions \ref{yzrel} and~\ref{negbasis}. 
\end{proof}

\begin{rem}\label{hidden-extensions}
The existence of the spectral sequence from Proposition~\ref{prop--inverse limit-yn} allows us to resolve hidden $\mathcal{E}^{\vee}$-module extensions in the homological Tate spectral sequence and the homological homotopy fixed point spectral sequence, as we summarize in the following result. See \cite{LN05} and~\cite{AKS20} for related results. 
\end{rem}

\begin{thm}\label{thm:continuous-Morava-k-theory-vanishing}
The $k(m)_{*}$-module $k(m)_{*}^{c}(\TP(y(n)))$ is simple $v_{m}$-torsion for all $0<m\le n$. Moreover, the $k(m)_{*}$-module $k(m)_{*}^{c}(\TC^-(y(n)))$ is simple $v_{m}$-torsion for each $0<m<n$. 
\end{thm}

\begin{proof}
It suffices to show that the $\mathrm{E}_{2}$-page of the inverse limit Adams spectral sequence 
\[ 
\lim_{i}\Ext_{\mathcal{A}^{\vee}}^{s,*}(\mathbb{F}_{2},H_{*}(k(m)\otimes \TP(y(n))[i];\mathbb{F}_{2}))\implies k(m)_{*}^{c}\TP(y(n))
\]
vanishes for $s>0$ and $0<m\le n$ and 
\[ 
\lim_{i}\Ext_{\mathcal{A}^{\vee}}^{s,*}(\mathbb{F}_{2},H_{*}(k(m)\otimes \TC^{-}(y(n))[i];\mathbb{F}_{2}))\implies k(m)_{*}^{c}\TC^{-}(y(n))
\]
vanishes for $s>0$ and $0<m<n$. By change-of-rings, it suffices to show that there is an isomorphism 
\[
\lim_i \Ext_{E(\bxi_{m})}^{s,*}(\mathbb{F}_{2},H_{*}(\TP(y(n))[i];\mathbb{F}_{2})) = 0
\]
for $s>0$ and $0<m\le n$ and there is an isomorphism 
\[ 
\lim_i \Ext_{E(\bxi_{m})}^{s,*}(\mathbb{F}_{2},H_{*}( \TC^{-}(y(n))[i];\mathbb{F}_{2})) = 0
\]
for $s>0$ and $0<m<n$. By Proposition~\ref{prop--inverse limit-yn}, the relevant inverse limit May--Ravenel spectral sequences strongly converge, so it suffices to observe that by Corollary~\ref{cor-main-tp}, the input 
\[
 \lim_i \Ext_{E(\bxi_{m})}^{s,*}(\mathbb{F}_{2},\Gr_{\textup{Gre}}^{*}H_{*}(\TP(y(n))[i];\mathbb{F}_{2})) 
 \]
is zero for $s>0$ for all $0<m \le n$ and by Corollary~\ref{cor-main-tn}, the input 
\[ 
\lim_i \Ext_{E(\bxi_{m})}^{s,*}(\mathbb{F}_{2},\Gr_{\textup{Gre}}^{*}H_{*}(\TC^{-}(y(n))[i];\mathbb{F}_{2})) 
\]
is zero for $s>0$ for all $0<m<n$. 
\end{proof}

\begin{rem}\label{rem:continuos-Morava-K-theory-to-Morava-K-theory}
Let $F\in \{\TC^{-},\TP\}$. By Adams' theorem~\cite[Theorem III.15.2]{Ada95}, there is an equivalence 
\[\lim_{i}k(m)\wedge \tau_{\ge w}F(y(n))[i]\simeq k(m)\wedge \lim_{i}\tau_{\ge w}F(y(n))[i] \]
for each integer $w$. Note that since the poset $(\mathbb{Z},\le )$ is a $1$-category, we can write this homotopy limit as a fiber between infinite products (using the Bousfield--Kan formula for the homotopy limit) and it suffices to commute $k(n)$ with these infinite products, which is the content of~\cite[Theorem III.15.2]{Ada95}. 
Since a sequential limit of uniformly bounded above spectra is bounded above and bounded above spectra are $K(m)$-acyclic, we know that 
\[ K(m)_{*}\lim_{i} \tau_{\le w-1} F(y(n))[i] =0 \]
for each integer $w$. Therefore, there are equivalences 
\begin{align*}
v_{m}^{-1}\lim_{i}k(m)\wedge \tau_{\ge w}F(y(n))[i] & \simeq K(m) \wedge \lim_{i} \tau_{\ge w}F(y(n))[i] \\ 
& \simeq K(m)\wedge  \lim_{i} F(y(n))[i]  \\ 
& \simeq K(m)\wedge  F(y(n))  
\end{align*}
for each integer $w$. Consequently, there are long exact sequences 
\[ 
\dots \to v_{m}^{-1} k(m)_{s}^{c}F(y(n))\to v_{m}^{-1}k(m)_{s}^{c}(\tau_{\le w}F(y(n)))\to K(m)_{s+1}F(y(n)) \to \dots 
\]
relating $v_{m}^{-1} k(m)_{s}^{c}F(y(n))$ to $K(m)_{s+1}F(y(n))$ for $F\in \{\TC^{-},\TP\}$. Here we write 
\[v_{m}^{-1} k(m)_{s}^{c}F(y(n)) := v_{m}^{-1}\lim_{i}k(m)_{s}F(y(n))[i] \]
and 
\[v_{m}^{-1}k(m)_{s}^{c}(\tau_{\le w}F(y(n))): =v_{m}^{-1}\lim_{i}k(m)_{s}(\tau_{\le w}F(y(n))[i]) \,.\]

Note that by~\cite{NS18}, there is a long exact sequence 
\[
\cdots  \to K(m)_{*}\TC(y(n))\to K(m)_{*}\TC^{-}(y(n))\to K(m)_{*}\TP(y(n))\to \cdots 
\]
for all $m\ge 1$ and by~\cite{DGM12,HM97} the trace map 
\[ 
K(m)_{*}\K(y(n))\to K(m)_{*}\TC(y(n))
\]
is an isomorphism for $m\ge 1$. So vanishing of $K(m)_{*}\TC^{-}(y(n))$ and $K(m)_{*}\TP(y(n))$ for $m\ge 1$ implies vanishing of $K(m)_{*}\K(y(n))$ for $m\ge 1$. 

Since the first version of this paper appeared in preprint form, work of~\cite{LMMT24} independently showed that 
\[
K(m)_{*}\K(y(n))=0
\] 
for $0<m<n$. This suggests that
\[ 
v_{m}^{-1}k(m)_{s}^{c}(\tau_{\le w}\TP(y(n)))=v_{m}^{-1}k(m)_{s}^{c}(\tau_{\le w}\mathrm{TC}^{-}(y(n)))=0
\]
for $0<m<n$. In joint work with Salch~\cite{AKS20}, the first author worked towards proving this, but unfortunately in the course of revising op.~cit.~an additional hypothesis was deemed necessary, which we have not been able to verify in the case of $y(n)$. 
\end{rem}

\newcommand{\etalchar}[1]{$^{#1}$}

\end{document}